\newcommand{\C}{\mathbb{C}}
\newcommand{\T}{\mathbb{T}}
\def\firstpage{1}
\date{}
\theoremstyle{plain}
\newtheorem{theorem}{Theorem}[section]
\newtheorem{corollary}[theorem]{Corollary}
\newtheorem{lemma}[theorem]{Lemma}
\newtheorem{definition}[theorem]{Definition}
\theoremstyle{remark}
\newtheorem{remark}[theorem]{Remark}
\newtheorem{numb}[theorem]{}
\title{\bf ON THE DECOMPOSITION THEOREMS FOR $C^*$-ALGEBRAS}
\author{Chunlan Jiang$^1$  Liangqing Li$^2$, and Kun Wang$^{3*}$}
\begin{document}

\large

\maketitle

\renewcommand{\thefootnote}{\fnsymbol{footnote}}

\footnotetext{\hspace*{-5mm} \begin{tabular}{@{}r@{}p{13.4cm}@{}}
$^1$ & College of Mathematics and Information Science, Hebei Normal University,  Shijiazhuang,  050024,    China.\\
&{E-mail: cljiang@hebtu.edu.cn} \\
$^{2}$ & Department of Mathematics, University of Puerto Rico at Rio Piedras, PR 00936, USA.\\
&{E-mail: liangqing.li@upr.edu}\\
$^{3}$ & Department of Mathematics, University of Puerto Rico at Rio Piedras, PR 00936, USA.\\
&{E-mail: kun.wang@upr.edu}\\
$^*$ & Corresponding author.
\end{tabular}}

\renewcommand{\thefootnote}{\arabic{footnote}}



{\bf Abstract}
Elliott dimension drop interval algebra is an  important class among all $C^*$-algebras in the classification theory.
Especially, they are building stones of $\mathcal{AHD}$ algebra and the latter contains all 
$AH$ algebras with the ideal property of no dimension growth.
In this paper, we will show two decomposition theorems related to the Elliott dimension drop interval algebra.
Our results are  key steps in classifying all $AH$ algebras with the ideal property of no dimension growth. \\

{\bf{Keywords}:} $C^*$-algebra, Elliott dimension drop interval algebra,  decomposition theorem, spectral distribution property

{\bf AMS subject classification}: Primary:  46L35, 46L80.
 

\section{Introduction}

Classification theorems have been obtained for $AH$ algebras---the inductive limits of cut downs of matrix algebras over compact metric spaces by projections---and $AD$ algebras---the inductive limits of Elliott dimension drop interval algebras in two special cases:

 $1$. Real rank zero case: all such $AH$ algebras
with no dimension growth and such $AD$ algebras (See \cite{Ell1}, \cite{EGLP}, \cite{EG1}, \cite{EG2}, \cite{EGS}, \cite{D}, \cite{G1}-\cite{G4}, \cite{Ei}, and \cite{DG});

$2$. Simple case: all
such $AH$ algebras with no dimension growth (which includes all simple $AD$ algebras by \cite{EGJS}) (See \cite{Ell2}, \cite{Ell3}, \cite{NT}, \cite{Thm2}, \cite{Thm3}, \cite{Li1}-\cite{Li4}, \cite{G5}, and
\cite{EGL1}).

In \cite{EGL1}, the authors pointed out two important possible next steps after the completion of  classification of simple $AH$ algebras (with no dimension
growth). One of these is the classification of simple $ASH$ algebras---the simple inductive limits of subhomogeneous algebras (with no dimension growth). The
other is to generalize and  unify the above-mentioned classification theorems for simple $AH$ algebras and real rank zero $AH$ algebras by classifying
$AH$ algebras with the ideal property. In this article, we have achieved several  key results for the second goal  by providing two decomposition theorems.

As in \cite{EG2}, let $T_{\uppercase\expandafter{\romannumeral2},k}$ be the $2$-dimensional connected simplicial complex with $H^{1}(T_{\uppercase\expandafter{\romannumeral2},k})=0$ and $H^{2}(T_{\uppercase\expandafter{\romannumeral2},k})=\mathbb{Z}/k\mathbb{Z}$, and
let $I_{k}$ be the subalgebra of $M_{k}(C[0,1])$ defined by 
$$I_k=\{  f\in M_k(C[0,1]):~f(0) \in\mathbb{C} \cdot 1_k\mbox{ and }f(1) \in\mathbb{C} \cdot 1_k\}.$$
This algebra is called an Elliott dimension drop interval algebra. Denote by $\mathcal{HD}$ the class of algebras consisting of direct sums of building
blocks of the forms $M_{l}(I_{k})$ and $PM_{n}(C(X))P$, with $X$ being one of the spaces $\{pt\}$, $[0,1]$, $S^{1}$, and $T_{\uppercase\expandafter{\romannumeral2},k}$, and with $P\in M _{n}(C(X))$ being a
projection. (In \cite{DG}, this class is denoted by $SH(2)$, and in \cite{Jiang1}, this class is denoted by $\mathcal{B}$). We will call a $C^{*}$-algebra an $A\mathcal{HD}$
algebra, if it is an inductive limit of algebras in $\mathcal{HD}$. 
In \cite{GJLP1}, \cite{GJLP2}, \cite{Li4},  and \cite{Jiang2}, it is proved that
all $AH$ algebras with the ideal property of no dimension growth are inductive limits of algebras in the class $\mathcal{HD}$---that is, they are $A\mathcal{HD}$ algebras. 
By this reduction theorem, to classify $AH$ algebras with the ideal property, we must study 
the properties of homomorphisms between those basic building blocks.


In the local uniqueness theorem for classification, it requires the homomorphisms involved to satisfy a certain spectral distribution property, called the  $sdp$
property (more specifically, $sdp(\eta,\delta)$ property  introduced in \cite{G5} and \cite{EGL1} for some positive real numbers $\eta$ and $\delta$). 
This property automatically
 holds for the homomorphisms $\phi_{n,m}$ (provided that $m$ is large enough) giving rise to  a simple inductive limit procedure. 
 But for the case of general inductive limit $C^{*}$-algebras
with the ideal property,  to obtain this $sdp$ property, we must pass to certain good quotient algebras which   corresponding to
simplicial sub-complexes of the  original spaces; a uniform uniqueness theorem, that does not depend on the choice of simplicial sub-complexes involved, is
required. 
For the case of an interval, whose simplicial sub-complexes are finite unions of subintervals and points, such a uniform uniqueness theorem is proved
in \cite{Ji-Jiang} (see \cite{Li2} and \cite{Ell2} also). 
But for the general case, there are no uniqueness theorem for the general case
involving arbitrary finite subsets of $M_{n}(C(T_{\uppercase\expandafter{\romannumeral2},k}))$ (or $M_{l}(I_{k})$). 
In this paper, we prove decomposition theorems between such building blocks or between
a building block of this kind and a  homogeneous building block.
And we will compare the decompositions of two different homomorphisms in the last part of chapter 4.
Such decomposition and comparison results will be used in the proof of the uniqueness
theorem for AH algebras with the ideal property in \cite{GJL} by  Gong, Jiang and Li.


\vspace{3mm}

\section{Notation and terminology}

\vspace{-2.4mm}

In this section, we will introduce some notation and terminology.

\begin{definition} \label{1.14} Let $X$ be a compact metric space and $\psi: C(X)\rightarrow PM_{k_{1}}(C(Y))P$ (with $rank(P)=k$) be a unital homomorphism. For any point $y\in Y$, there are $k$ mutually orthogonal rank $1$  projections $p_{1},p_{2},\cdots,p_{k}$ with $\sum\limits_{i=1}\limits^{k}p_{i}=P(y)$ and $\{x_{1}(y),x_{2}(y),\cdots,x_{k}(y)\}\subset X$ (may be repeat) such that$$\psi(f)(y)=\sum\limits_{i=1}\limits^{k}f(x_{i}(y))p_{i}, \forall f\in C(X).$$ 
We  denote the set $\{x_{1}(y),x_{2}(y),\cdots,x_{k}(y)\}$ (counting multiplicities), by $Sp\psi_{y}$. We shall call $Sp\psi_{y}$ {\bf the spectrum of } $\psi$
{\bf at the point} $y$.
\end{definition}

\begin{numb}\label{1.15}
For any $f\in I_{k}\subset M_{k}(C[0,1])=C([0,1],M_{k}(\mathbb{C}))$ as in 3.2 of \cite{EGS}, let function $\underline{f}: [0,1]\longrightarrow \mathbb{C}\sqcup M_{k}(\mathbb{C})$ (disjoint union) be defined by
$$\underline{f}(t)=\begin{dcases}
 \lambda, & if\; t=0\; and\; f(0)=\lambda\textbf{1}_{k}\\
  \mu, & if\; t=1\; and \;f(1)=\mu\textbf{1}_{k}\\
  f(t), & if\; 0<t<1~~~~~~~~~~~~~~~~.
   \end{dcases}$$
That is, $\underline{f}(t)$ is the value of irreducible representation of $f$ corresponding to the point  $t$. Similarly,  for $f\in M_{l}(I_{k})$, we can
define $\underline{f}: [0,1]\longrightarrow M_{l}(\mathbb{C})\sqcup M_{lk}(\mathbb{C})$, by
$$\underline{f}(t)= \begin{cases}
  a, & if\;t=0\;and\;f(0)=a\otimes\textbf{1}_{k} \\
   b, & if\;t=1\;and\;f(1)=b\otimes\textbf{1}_{k} \\
    f(t),& if\;0<t<1~~~~~~~~~~~~~~~~~~.
   \end{cases}$$
\end{numb}

\begin{numb}\label{1.16}
Suppose that $\phi: I_{k}\longrightarrow PM_{n}(C(Y))P$ is  a unital homomorphism. Let $r=rank(P)$. For each $y\in Y$, there are $t_{1},t_{2},\cdots,t_{m}\in[0,1]$ and a unitary
$u\in M_{n}(\mathbb{C})$ such that
$$P(y)=u\left(
          \begin{array}{cc}
            \textbf{1}_{rank(P)} & 0 \\
            0 & 0 \\
          \end{array}
        \right)u^{*}$$
and
\begin{equation}\label{dhom}
 \phi(f)(y)=u\left(
                               \begin{array}{ccccc}
                                 \underline{f}(t_{1}) &  & & & \\
                                  & \underline{f}(t_{2}) &  &  &  \\
                                  &  & \ddots &  &  \\
                                  &  &  & \underline{f}(t_{m}) &  \\
                                  &  &  &  &  {\bf 0}_{n-r} \\
                               \end{array}
                             \right)
u^{*}\in P(y)M_{n}(\mathbb{C})P(y)
\end{equation}
for all $f\in I_{k}.$

\end{numb}

\begin{numb}\label{1.17}
Let $\phi$ be the homomorphism  defined by  the equation (2.1) above  with $t_{1},t_{2},$ $\cdots,t_{m}$ as appeared in the diagonal of the matrix. We define the set $Sp\phi_{y}$ to be the points $t_{1},t_{2},\cdots,t_{m}$ with possible
fraction multiplicity. If $t_{i}=0$ or $1$, we will assume that the multiplicity of $t_{i}$ is  $\frac{1}{k}$; if $0<t_{i}<1$, we will assume that the multiplicity of  $t_{i}$ is $1$. For example if we assume $$t_{1}=t_{2}=t_{3}=0<t_{4}\leq t_{5}\leq \cdots \leq t_{m-2}<1=t_{m-1}=t_{m},$$ then
$Sp\phi_{y}=\{0^{\sim\frac{1}{k}},0^{\sim\frac{1}{k}},0^{\sim\frac{1}{k}},t_{4},t_{5},\cdots,t_{m-2},1^{\sim\frac{1}{k}},1^{\sim\frac{1}{k}}\}$, which  can also be written
as $$Sp\phi_{y}=\{0^{\sim\frac{3}{k}},t_{4},t_{5},\\ \cdots,t_{m-2},1^{\sim\frac{2}{k}}\}.$$ Here we emphasize that, for $t\in (0,1)$, we do not allow the multiplicity of $t$ to be non-integral. Also for 0 or 1, the multiplicity must be multiple of $\frac{1}{k}$ (other fraction numbers are not allowed).

Let $\psi: C[0,1]\longrightarrow PM_{n}(C(Y))P$ be defined by the following composition $$\psi:~C[0,1]\hookrightarrow I_{k}\xrightarrow {\phi}PM_{n}(C(X))P,$$ where
the first map is the canonical inclusion. Then we have $Sp\psi_{y}=\{Sp\phi_{y}\}^{\sim k}$---that is, for each element $t\in(0,1)$, its multiplicity in $Sp\psi_{y}$ is exactly $k$ times of the multiplicity in $\phi_{y}$.
\end{numb}

\begin{numb}
\begin{enumerate}[(a)]
\item we use $\sharp(.)$ to denote the cardinal number of a set. Very often, the sets under consideration will be sets with multiplicity, in which case we shall
also count multiplicity when we use the notation $\sharp$.
The set may also contain fractional point. For example, 
$$\sharp\{0_1,1_2, 0,0,1\}=5.$$
\item We shall use $a^{\sim k}$ to denote $\underbrace{a,a,\cdots a}\limits_{k}$. For example $\{a^{\sim3},b^{\sim2}\}=\{a,a,a,b,b\}$.\\
\item For any metric space $X$, any $x_{0}\in X$ and $c>0$, let $B_{c}(x_{0})\triangleq\{x\in X|d(x,x_{0})<c\}$,  the open ball with radius $c$ and center $x_{0}$.\\
\item Suppose that $A$ is a $C^{*}$-algebra, $B\subset A$ a subset (often a subalgebra), $F\subset A$ is a finite subset and $\varepsilon>0$. If for each element $f\in F$, there is an element $g\in B$ such
that $\|f-g\|<\varepsilon$, then we shall say that $F$ is approximately contained in $B$ to within $\varepsilon$, and denote this by $F\subset_{\varepsilon}B$.\\
\item Let $X$ be a compact metric space. For any $\delta>0$, a finite set $\{x_{1},x_{2},\cdots,x_{n}\}$ is said to be $\delta$-dense in $X$ if for
any $x\in X$, there is $x_{i}\in \{x_{1},x_{2},...,x_{n}\}$ such that $dist(x,x_{i})<\delta$.\\
\item We shall use $\bullet$ or $\bullet\bullet$ to denote any possible positive integers.\\
\item For any two projections $p,q\in A$, by $[p]\leq[q]$ we mean that $p$ is unitarily equivalent to a sub-projection of $q$. And we
use $p\sim q$ to denote that $p$ is unitarily equivalent to $q$.

\end{enumerate}
\end{numb}

\begin{numb}\label{1.18}
Let $A=M_{l}(I_{k})$. Then every  point $t\in(0,1)$ corresponds to an irreducible representation $\pi_{t}$,  defined by $\pi_{t}(f)=f(t)$. The representations  $\pi_{0}$ and $\pi_{1}$ defined by $$\pi_{0}=f(0)~~~~~~\mbox{ and}~~~~~~~\pi_{1}=f(1)$$ are no longer irreducible. We use $\underline{0}$ and $\underline{1}$ to denote the corresponding points for the irreducible representations. That is, $$\pi_{\underline{0}}(f)=\underline{f}(0),~~~~~~~\mbox {and}~~~~~~~~\pi_{\underline{1}}(f)=\underline{f}(1).$$ Or we can also write $\underline{f}(0)\triangleq f(\underline{0})$ and $\underline{f}(1)\triangleq f(\underline{1})$. Then the equation $(*)$ could be written as
$$\phi(f)(y)=u\left(
                \begin{array}{ccccc}
                  f(t_{1}) &  &  &  &  \\
                   & f(t_{2}) &  &  &  \\
                  &  & \ddots &  &  \\
                   & &  & f(t_{m}) &  \\
                   &  &  &  & {\bf 0}_{n-r} \\
                \end{array}
              \right)u^{*},$$
where some of $t_{i}$ may be $\underline{0}$ or $\underline{1}$. In this notation,  up to unitary equivalence, 
$f(0)$ is  equal to  diag$(\underbrace{f(\underline{0}),f(\underline{0}),\cdots,f(\underline{0})}\limits_{k})$ .


Under this notation, we can also write $0^{\sim\frac{1}{k}}$ as $\underline{0}$. Then the example of $Sp\phi_{y}$ in \ref{1.17} can be  written as
$$Sp\phi_{y}=\{0^{\sim\frac{1}{k}},0^{\sim\frac{1}{k}},0^{\sim\frac{1}{k}},t_{4},t_{5},\cdots,t_{m-2},1^{\sim\frac{1}{k}},1^{\sim\frac{1}{k}}\}\\=\{\underline{0},\underline{0},\underline{0},
t_{4},t_{5},\cdots,t_{m-2},\underline{1},\underline{1}\}.$$
\end{numb}

\begin{numb}\label{1.19}
For a homomorphism $\phi: A\longrightarrow M_{n}(I_{k})$, where $A=I_{k}$ or $C(X)$, and for any $t\in[0,1]$, define $Sp\phi_{t}=Sp\psi_{t}$, where
$\psi$ is defined by the composition $$\psi: A\xrightarrow {\phi} M_{n}(I_{l})\rightarrow M_{nl}(C[0,1]).$$
Also $Sp\phi_{\underline{0}}=Sp(\pi_{\underline{0}}\circ \phi)$. Hence, $Sp\phi_{0}=\{Sp\phi_{\underline{0}}\}^{\sim k}.$
\end{numb}

\begin{numb}\label{1.20}
Let $\phi: M_{n}(A)\longrightarrow B$ be a unital homomorphism. It is well known (see 1.34 and 2.6 of \cite{EG2}) that there is an identification of $B$
with $(\phi(e_{11})B\phi(e_{11}))\otimes M_{n}(\mathbb{C})$ such that $$\phi=\phi_{1}\otimes id_{n}: M_{n}(A)=A\otimes M_{n}(\mathbb{C})\longrightarrow (\phi(e_{11})B\phi(e_{11}))\otimes M_{n}(\mathbb{C})=B,$$ where $e_{11}$ is the matrix unit of upper left corner of $M_{n}(A)$ and $\phi_{1}=\phi|_{e_{11}M_{n}(A)e_{11}}: A\longrightarrow \phi(e_{11})B\phi(e_{11})$.

If we further assume that $A=I_{k}$ or $C(X)$ (with $X$ being a connected $CW$ complex) and $B$ is either $QM_{n}(C(Y))Q$ or $M_{l}(I_{k_{1}}),$ then for any
$y\in SpB$, define $Sp\phi_{y}\triangleq Sp(\phi_{1})_{y}.$
Here,  we use the standard notation that if $B=PM_{m}(C(Y))P$ then $SpB=Y$; and if $B=M_{l}(I_{k})$, then $Sp(B)=[0,1].$
\end{numb}

\begin{numb}\label{1.24} Let $A$ and $B$ be either of form $PM_{n}(C(X))P$ (with $X$ path connected) or of form $M_{l}(I_{k})$. Let $\phi: A\longrightarrow B$ be a unital
homomorphism, we say that $\phi$ has property $sdp(\eta,\delta)$ ({\bf spectral distribution property with respect to $\eta$ and  $\delta$}) if for any $\eta$-ball $$B_{\eta}(x)=\{x^{\prime}\in X~|~~dist(x^{\prime},x)<\eta)\} \subset  X(=Sp(A))$$ and any point $y\in Sp(B)$, 
$$\sharp(Sp\phi_{y}\cap B_{\eta}(x))\geq\delta\cdot\sharp Sp\phi_{y},$$ counting multiplicity. If $\phi$ is not unital, we say that $\phi$ has $sdp(\eta,\delta)$ if the corresponding
unital homomorphism $\phi: A\longrightarrow \phi(\textbf{1}_{A})B\phi(\textbf{1}_{A})$ has property $sdp(\eta,\delta)$.
\end{numb}

\begin{numb}\label{1.28}
Set $P^{n}X=\underbrace{X\times X\times\cdots\times X}\limits_{n}/\thicksim$, where the equivalence relation $\thicksim$ is defined by $$(x_{1},x_{2},\cdots,x_{n})\thicksim(x^{\prime}_{1},x^{\prime}_{2},\cdots,x^{\prime}_{n})$$ if there is a permutation $\sigma$ of $\{1,2,\cdots,n\}$ such that
$x_{i}=x^{\prime}_{\sigma(i)}$ for each $1\leq i\leq n$. A metric $d$ on $X$ can be extended to a metric on $P^{n}X$ by $$d([x_{1},x_{2},\cdots,x_{n}],[x^{\prime}_{1},x^{\prime}_{2},\cdots,x^{\prime}_{n}])=\min\limits_{\sigma}\max\limits_{1\leq i\leq n}d(x_{i},x^{\prime}_{\sigma(i)}),$$
where $\sigma$ is taken from the set of all permutations, and $[x_{1},x_{2},\cdots,x_{n}]$ denote the equivalence class of $(x_{1},x_{2},\cdots,x_{k})$ in $P^{k}X$.
\end{numb}

\begin{numb}\label{1.29} Let $X$ be a metric space with metric $d$. Two $k$-tuple of (possible repeating) points $\{x_{1},x_{2},\cdots,x_{n}\}\subset X$ and
$\{x'_{1},x'_{2},\cdots,x'_{n}\}\subset X$ are said to {\bf be paired within} $\eta$ if there is a permutation $\sigma$ such that
$$d(x_{i},x^{\prime}_{\sigma(i)})<\eta,~~~ i=1,2,\cdots,k. $$ This is equivalent to the following statement. If one regards $[x_{1},x_{2},\cdots,x_{n}]$ and
$[x^{\prime}_{1},x^{\prime}_{2},\cdots,x^{\prime}_{n}]$ as points in $P^{n}X$, then $$d([x_{1},x_{2},\cdots,x_{n}],[x^{\prime}_{1},x^{\prime}_{2},\cdots,x^{\prime}_{n}])<\eta.$$
\end{numb}

\begin{numb}\label{1.30}
For $X=[0,1]$, let $P^{(n,k)}X$, where $n,k\in\mathbb{Z_{+}}\backslash \{0\}$, denote the set of $\frac{n}{k}$ elements from $X$, in which only 0 or 1 may appear
fractional times. That is,  each element in $X$ is of the form 
\begin{align}\label{rep1}
\quad\quad\quad\quad\quad\quad\quad\quad\quad
\{0^{\thicksim\frac{n_{0}}{k}},t_{1},t_{2},\cdots,t_{m},1^{\thicksim\frac{n_{1}}{k}}\}
\end{align}
with $0<t_{1}\leq t_{2}\leq\cdots\leq t_{m}<1$ and $\frac{n_{0}}{k}+m+\frac{n_{1}}{k}=\frac{n}{k}.$\\
An element in $P^{(n,k)}X$ can always be written as 
\begin{align}\label{rep2}
\quad\quad\quad\quad\quad\quad\quad\quad\quad
\{0^{\thicksim\frac{k_{0}}{k}},t_{1},t_{2},\cdots,t_{i},1^{\thicksim\frac{k_{1}}{k}}\}, 
\end{align}
where $0\leq k_{0}<k$, $0\leq k_{1}<k$, $0\leq t_{1}\leq t_{2}\leq\cdots\leq t_{i}\leq1$ and $\frac{k_{0}}{k}+i+\frac{k_{1}}{k}=\frac{n}{k}.$ (Here $t_{i}$
could be 0 or 1.) In the above representations \ref{rep1} and \ref{rep2}, we know that 
$$k_{0}\equiv n_{0}\mbox{ (mod }\;k) \mbox{ and }k_{1}\equiv n_{1}\mbox{ (mod }\;k).$$
 Let $$y=[0^{\thicksim\frac{k_{0}}{k}},t_{1},t_{2},\cdots,t_{i},1^{\thicksim\frac{k_{1}}{k}}]\in P^{(n,k)}X$$ and $$y^{\prime}=[0^{\thicksim\frac{k^{\prime}_{0}}{k}},t^{\prime}_{1},t^{\prime}_{2},\cdots,t^{\prime}_{i},1^{\thicksim\frac{k^{\prime}_{1}}{k}}]\in P^{(n,k)}X,$$ with $k_{0},k_{1},k^{\prime}_{0},k^{\prime}_{1}\in\{0,1,\cdots,k-1\}.$

We define $dist(y,y')$ as the following: if $k_{0}\neq k^{\prime}_{0}$ or $k_{1}\neq k^{\prime}_{1}$, then $dist(y,y^{\prime})=1$; if $k_{0}=k^{\prime}_{0}$ and $k_{1}= k^{\prime}_{1}$ (consequently $i=i^{\prime}$), then $$dist(y,y^{\prime})=\max\limits_{1\leq j\leq i}|t_{j}-t^{\prime}_{j}|,$$ as we order the $\{t_{j}\}$ and $\{t^{\prime}_{j}\}$ as
$t_{1}\leq t_{2}\leq\cdots\leq t_{i}$ and $t^{\prime}_{1}\leq t^{\prime}_{2}\leq\cdots\leq t^{\prime}_{i}$, respectively. 

Note that $P^{(n,1)}X=P^{n}X$ with the same metric. Let $\phi,\varphi: I_{k}\longrightarrow M_{n}(\mathbb{C})$ be two unital homomorphisms. Then $Sp\phi$ and
$Sp\psi$ define two elements in $P^{(n,k)}[0,1]$. We say that $Sp\phi$ and $Sp\psi$ can be paired within $\eta$, if $dist(Sp\phi,Sp\psi)<\eta$. 

Note that
if $dist(Sp\phi,Sp\psi)<1$, then $KK(\phi)=KK(\psi)$.
\end{numb}

\begin{numb}\label{specrestriction} Let $A=PM_{k}(C(X))P$, or $M_l(I_k)$ and $X_{1}\subset Sp(A)$ be a closed subset---that is,  $X_{1}$ is a closed subset of $X$ or of $[0,1]$. We define $A|_{X_{1}}$ to be the quotient algebra $A/I$, where $I=\{f\in A, f|_{X_{1}}=0\}.$
Evidently $Sp(A|_{X_{1}})=X_{1}$. 

If $B=QM_{k}(C(Y))Q$, $\phi: A\longrightarrow B$ is a homomorphism, and $Y_{1}\subset Sp(B)(=Y\;or\;[0,1])$ is a closed subset, then we use $\phi|_{Y_{1}}$ to denote the composition. $$\phi|_{Y_{1}}: A\xrightarrow{\phi} B\rightarrow B|_{Y_{1}}.$$
If $Sp(\phi|_{Y_{1}})\subset X_{1}\cup X_{2}\cup\cdots\cup X_{k}$, where $ X_{1},X_{2},\cdots,X_{k}$ are mutually disjoint closed subsets of $X$, then the
homomorphism $\phi|_{Y_{1}}$ factors as
$$A\longrightarrow A|_{X_{1}\cup X_{2}\cup\cdots\cup X_{n}}=\bigoplus\limits^{n}\limits_{i=1}A|_{X_{i}}\longrightarrow B|_{Y_{1}}.$$
We will use $\phi|^{X_{i}}_{Y_{1}}$ to denote the part of $\phi|_{Y_{1}}$ corresponding to the map $A|_{X_{i}}\longrightarrow B|_{Y_{1}}.$ Hence
$\phi|_{Y_{1}}=\bigoplus\limits_{i}\phi|^{X_{i}}_{Y_{1}}.$
\end{numb}

\section{Decomposition Theorem I}

In this section, we will prove the following theorem.

\begin{theorem}\label{THM} Let $F\subset I_{k}$ be a finite set, $\varepsilon > 0$.
There is an $\eta>0$, satisfying that if $$\phi: I_{k}\rightarrow PM_{\bullet}(C(X))P~~(dim(X)\leq 2)$$ is
a unital homomorphism such that for any $x\in X$,
\begin{center}
$\sharp(Sp\phi_{x}'\cap[0,\frac{\eta}4])\geq k$ and $\sharp(Sp\phi_{x}'\cap[1-\frac{\eta}4],1])\geq k,$
\end{center}
where
$$\phi':C[0,1]\xrightarrow{\imath} I_{k}\xrightarrow{\phi}PM(C(X))P,$$
then there are three mutually orthogonal projections ~$$Q_{0},~Q_{1},~P_{1}~\in PM_{\bullet}(C(X))P$$with$$Q_{0}+Q_{1}+P_{1}=P$$
and a unital homomorphism $$\psi_{1}:~M_{k}(C[0,1])\rightarrow P_{1}M_{\bullet}(C(X))P_{1}$$ such that\\
(1) write $\psi(f)={f}(\underline{0})Q_{0}+{f}(\underline{1})Q_{1}+(\psi_{1}\circ\imath)(f),$
then
$$\parallel\phi(f)-\psi(f)\parallel<\varepsilon$$
for all $f\in F\subset I_{k}\subset M_{k}(C[0,1]),$ and
 \\
(2) $rank(Q_{0})\leq k$ and $rank(Q_{1})\leq k$.
\end{theorem}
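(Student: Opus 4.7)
The plan is to isolate within $\phi$ two small ``endpoint'' pieces of total rank at most $k$ each, approximate them by the scalar evaluations $f(\underline 0)Q_0$ and $f(\underline 1)Q_1$, and show that what remains factors approximately through $M_k(C[0,1])$.

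First I use uniform continuity of the finite family $F\subset I_k\subset M_k(C[0,1])$ to choose $\eta>0$ so small that $\|f(s)-f(t)\|<\varepsilon/8$ whenever $f\in F$ and $|s-t|\le\eta$; in particular $\|f(t)-f(\underline 0)\mathbf 1_k\|<\varepsilon/8$ on $[0,\eta]$ and symmetrically near $1$. Pick $g_0\in C[0,1]$ with $g_0\equiv 1$ on $[0,\eta/8]$ and $\operatorname{supp}g_0\subset[0,\eta/4]$, and let $E_0$ be the spectral projection of $\phi'(g_0)$ for eigenvalues $>1/2$. By \ref{1.17} and the spectral hypothesis, $\operatorname{rank}E_0(x)\ge k$ for every $x\in X$; define $E_1$ analogously at the other endpoint. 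The normal form of \ref{1.16} then shows that $E_0(x)\phi(f)(x)E_0(x)$ is within $\varepsilon/8$ of $f(\underline 0)E_0(x)$ for every $f\in F$, and likewise for $E_1$.

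Next I carve out globally continuous sub-projections $Q_0\le E_0$ and $Q_1\le E_1$ of ranks $r_0,r_1\in\{0,1,\ldots,k\}$ satisfying $r_0+r_1\equiv\operatorname{rank}(P)\pmod k$; such selections exist over the $2$-dimensional $X$ by standard vector-bundle obstruction theory, using $\operatorname{rank}E_0(x)\ge k\ge r_0$ pointwise. Setting $P_1=P-Q_0-Q_1$, the rank of $P_1$ is divisible by $k$. To produce a unital $\psi_1:M_k(C[0,1])\to P_1 M_\bullet(C(X))P_1$, I exploit the fact that the normal form \ref{1.16} already gives $\phi|_{P_1}$ a pointwise $k$-block structure on its interior spectrum, together with the $k$-fold batches of $\underline 0$'s and $\underline 1$'s that remain after carving; this structure patches coherently over $X$ (again using $\dim X\le 2$) into an orthogonal decomposition $P_1=p_1+\cdots+p_k$ into equivalent sub-projections, and hence into $\psi_1=\tilde\psi_1\otimes\operatorname{id}_k$ for the induced $\tilde\psi_1:C[0,1]\to p_1M_\bullet(C(X))p_1$. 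The Step-1 estimate on $E_0,E_1$ then gives $\|\phi(f)-\psi(f)\|<\varepsilon$ for every $f\in F$.

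The main obstacle is the globalization of these selections: the pointwise choices of a rank-$r_0$ subprojection of $E_0$, of the $k$-fold splitting of $P_1$, and of the commuting map $\tilde\psi_1$ are all non-canonical, and must be made coherently over the $2$-dimensional $X$ while preserving the $\varepsilon$-approximation uniformly in $x$. The $\dim X\le 2$ hypothesis is essential at each step, and the modular relation $r_0+r_1\equiv\operatorname{rank}(P)\pmod k$ is what makes the $\ge k$ spectral bounds near $0$ and $1$ exactly sufficient for the decomposition.
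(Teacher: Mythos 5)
Your high-level strategy (isolate pieces near the endpoints, replace them by scalar evaluations, and factor the rest through $M_k(C[0,1])$) matches the goal of the paper's proof, but there is a genuine gap at the very first construction. The operator $\phi'(g_0)$ is a positive element of $PM_\bullet(C(X))P$, and its ``spectral projection for eigenvalues $>1/2$'' is an element of that $C^*$-algebra only if $1/2$ stays uniformly bounded away from the spectrum of $\phi'(g_0)(x)$ over all $x\in X$. That is not available here: the eigenvalues of $\phi'(g_0)(x)$ are the values $g_0(\xi)$ for $\xi\in Sp\phi'_x$, and the hypothesis only gives a \emph{lower} bound $\sharp(Sp\phi'_x\cap[0,\eta/4])\geq k$; nothing prevents some $\xi(x)\in Sp\phi'_x$ from sweeping continuously across the slope of $g_0$ as $x$ varies, so the rank of the proposed $E_0(x)$ can jump and $E_0$ is not a continuous field of projections. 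No choice of threshold repairs this, since the eigenvalues can fill $[0,1]$ as $x$ moves. The same problem undermines the later ``carving'' of $Q_0\leq E_0$ and the claimed orthogonal splitting $P_1=p_1+\cdots+p_k$.

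You explicitly flag the globalization of all these pointwise choices as ``the main obstacle'' and assert that it is handled by $\dim X\leq 2$ and standard obstruction theory, but this is precisely where the actual work of the theorem lies, and it is not a routine bundle-theoretic fact: one must first manufacture a uniform spectral gap. In the paper's proof this is done by fixing a fine simplicial decomposition of $X$, observing that over each (small) simplex $\Delta$ the set $Sp\phi|_\Delta$ decomposes into clumps $T_0(\Delta),\dots,T_l(\Delta)$ separated by gaps of width at least $\eta/(12n)$ (Lemma~\ref{5.8}), and then modifying $\phi$ only on the two endpoint clumps, \emph{one skeleton at a time}. Since the clump decompositions refine under passage to faces (see \ref{5.9}), the modifications on a boundary are compatible with those on the simplex, and two extension lemmas (Lemma~\ref{5.15} for $1$-simplices and Lemma~\ref{Ext two} for filling in $2$-simplices over a disk) carry the construction up to the $2$-skeleton. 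Only at the end, after the modified $\psi$ is shown to have $\sharp(Sp\widetilde\psi|_x\cap\{0\})$ and $\sharp(Sp\widetilde\psi|_x\cap\{1\})$ constant over $X$, does one obtain a global gap $Sp\psi\subset\{0\}\cup[\xi_1,\xi_2]\cup\{1\}$ and hence the projections $Q_0,Q_1,P_1$ and the homomorphism $\psi_1$. Without some mechanism of this kind your $E_0$, $Q_0$, and $p_1,\dots,p_k$ are all undefined, so the proposal as written does not prove the theorem.
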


We will divide the proof into several steps.

\begin{numb}\label{5.2} Let $\eta>0$ (and $\eta<1$) be such that if $\mid t-t'\mid <\eta$, then $\parallel f(t)-f(t')\parallel< \frac{\varepsilon}{6}$ for all $f\in F$.
We will prove that this $\eta$ is as desired. Let a unital homomorphism $\phi:I_{k}\rightarrow PM_{\bullet}C(X)P$ satisfy that $\sharp(Sp\phi_{x}\cap[0,\frac{\eta}{4}])\geq k$ and $\sharp(Sp\phi_{x}\cap[1-\frac{\eta}{4},1])\geq k$ for each $x\in X$, we will prove such $\phi$
has the decomposition as desired.
\end{numb}

\begin{numb}\label{5.3} Let $rank(P)=n$. And let $e_{i,j}\in M_{n}(\mathbb{C})$ be the matrix units. For any closed set $Y\subset [0,1]$, define
$h_{Y}\in C[0,1]\subset I_{k}$ (considering C[0,1] as in the center of $I_{k})$ as

$$ h_{Y}(t)= \begin{dcases}
  1, & if\; t\in Y\\
   1-\frac{12n}{\eta}dist(t,x), & if \; dist(t,x)\leq\frac{\eta}{12n}\\
  0, & if \; dist(t,x)\geq\frac{\eta}{12n}~~.
   \end{dcases}$$

Define $H'=\{h_{Y}\mid Y$ is closed$\}\cup\{h_{Y}e_{ij}~|~~Y\subset[\frac{\eta}{12n},1-\frac{\eta}{12n}]$ is closed$\}$. Note that for a closed set $Y\subset[\frac{\eta}{12n},1-\frac{\eta}{12n}]$, ~~$h_{Y}(0)=h_{Y}(1)=0$, and therefore $h_{Y}e_{ij}\in I_{k}$. Note also that the family $H'$ is equally continuous. There is a finite set $H\subset H'$ satisfying that for any $h'\in H'$, $\exists h\in H$ such that $$\parallel h-h'\parallel\leq\frac{\varepsilon}{12(n+1)^{2}}~~.$$

For finite set $H\cup F$, $\varepsilon>0$, and $\phi:I_{k}\rightarrow PM_{\bullet}(C(X))P$, there is a $\tau>0$ such that the following are true:\\
(a) For $x,x'\in X$ with $dist(x',x)<\tau$, $Sp\phi|_{x}$ and $Sp\phi|_{x'}$ can be paired within
$\frac{\eta}{24n^{2}}$. This is equivalent to the  condition that $Sp\phi'|_{x}$ can be paired with $Sp\phi'|_{x'}$ to within $\frac{\eta}{24n^{2}}$ (since $KK(\phi|_{x})=KK(\phi|_{x'})$),  where $\phi'=\phi\circ \imath$ is as the above.\\
(b) For $x,x'\in X$ with $dist(x',x)<\tau$,$$\|\phi(h)(x)-\phi(h)(x')\|\leq\frac{\varepsilon}{12(n+1)^{2}}~,$$
regarding $\phi(h)(x)\in P(x)M_{\bullet}(\mathbb{C})P(x)\subset M_{\bullet}(\mathbb{C})$ and $\phi(h)(x')\in P(x')M_{\bullet}(\mathbb{C})P(x')\subset M_{\bullet}(\mathbb{C})$. In particular, $\|P(x)-P(x')\|<\frac{\varepsilon}{12(n+1)^{2}}$ since $1\in H$.
\end{numb}

\begin{numb}\label{5.4} Choose any simplicial decomposition on $X$ such that for any simplex $\Delta\subset X$, the set
\begin{center}
$Star(\triangle)=\cup\{\stackrel{\circ}{\Delta'}|\Delta'$ is a simplex of $X$ with $\Delta'\cap\Delta\neq\emptyset\}$
\end{center}
has diameter at most $\frac{\tau}{2}$, where $\mathop{\Delta^{\prime}}\limits^{\circ}$ is the interior of the simplex $\Delta^{\prime}.$
\end{numb}

\begin{numb}\label{5.5} 
We will construct the homomorphism $\psi:I_{k}\rightarrow PM_{\bullet}(C(X))P$ which is of the form $$\psi(f)=\underline{f}(0)Q_{0}+\underline{f}(1)Q_{1}+\psi_{1}(f)$$ as described in the theorem. Our construction will be carried out simplex by simplex. 

First, define the restriction of map $\psi$ to $PM_{\bullet}(C(X))P|_{v}=P(v)M_{\bullet}(\mathbb{C})P(v)$ for each vertex $v\in X$. The homomorphism is denoted by $$\psi|_{\{v\}}:I_{k}\rightarrow P(v)M_{\bullet}(\mathbb{C})P(v).$$ 
(Here and below, we refer the reader to \ref{specrestriction} for the notation $\psi|_{X_1}$ for a subset $X_1\subset X$.)

Next, we will define, for each 1-simplex $[a,b]\subset X$, the homomorphisms $$\psi|_{[a,b]}: I_{k}\rightarrow P|_{[a,b]}M_{\bullet}(C([a,b]))P|_{[a,b]}$$ which will give the same maps as the previously defined maps $\psi|_{\{a\}}$ and $\psi|_{\{b\}}$ on the boundary $\{a,b\}$. Finally, we will define, for each 2-simplex $\Delta\subset X$, the homomorphism $$\psi|_{\Delta}: I_{k}\rightarrow P|_{\Delta}M_{\bullet}(C(\Delta))P|_{\Delta}$$
such that $\psi|_{\partial\triangle}$ should be the  same as what previously defined.
\end{numb}

\begin{numb}\label{5.6}  For each simplex $\Delta$ of any dimension, let $C_{\Delta}$ denote the center of the simplex. That is, if $\Delta$ is a vertex $v$, then $C_{\Delta}=v$; if $\Delta$ is a 1-simplex identified with [a,b], then $C_{\Delta}=\frac{a+b}{2}$; and if $\Delta$ is a 2-simplex identified with a triangle in $\mathbb{R}^{2}$ with vertices $\{a,b,c\}\subseteq \mathbb{R}^{2}$, then $C_{\Delta}=\frac{a+b+c}{3}\in\mathbb{R}^{2}$ which is barycenter
of $\Delta$.
\end{numb}

\begin{numb} \label{partition}
According to each simplex $\Delta$ (of possible dimensions 0, 1, or 2), we will divide  the set $Sp\phi'|_{\Delta}\subset[0,1]$ into pieces, where  $\phi':C[0,1]\hookrightarrow I_{k}\xrightarrow{\phi}PM_{\bullet}C(X)P$.
(Recall $Sp\phi'|_{x}=\{Sp\phi|_{x}\}^{\sim k}$, and $Sp\phi'|_{x}$ has no fractional multiplicity). So for each $x\in X$,
\begin{center}
$Sp\phi'|_{x}=n=rank(P)$ (counting multiplicity).
\end{center}
If we order $Sp\phi'|_{x}$ as $$0\leq\lambda_{1}(x)\leq\lambda_{2}(x)\leq\cdots\leq\lambda_{n}(x)\leq1,$$
then all functions $\lambda_{i}$ are continuous functions. By path connectedness of simplex $\Delta$,  the set $Sp\phi|_{\Delta}$ can be written as $$Sp\phi|_{\Delta}=[a_{0},b_{0}]\cup[a_{1},b_{1}]\cup\cdots\cup[a_{k'-1},b_{k'-1}]\cup[a_{k'},b_{k'}]$$
with $$0\leq a_{0}\leq b_{0}<a_{1}\leq b_{1}<a_{2}\leq b_{2}<\cdots<a_{k'-1}\leq b_{k'-1}<a_{k'}\leq b_{k'}\leq1.$$
(Note that, if $a_i=b_i$, then $[a_i,b_i]=\{a_i\}$ is a degenerated interval.)  

We will group the above intervals into groups $T_{0}\cup T_{1}\cup\cdots \cup T_{last}$ such that $Sp\phi|_{\Delta}=\cup T_{j}$, with the condition that for any $\lambda\in T_{j}, \mu\in T_{j+1}$, we have $\lambda<\mu$, according to the following procedure:\\
(i) $Sp\phi|_{\Delta}\cap[0,\frac{\eta}{4}+\frac{\eta}{12n}]\subset T_{0}$, that is,  all the above intervals $[a_{i},b_{i}]$ with $a_{i}\leq\frac{\eta}{4}+\frac{\eta}{12n}$ should be in the group $T_{0}$; and $Sp\phi|_{\Delta}\cap[1-(\frac{\eta}{4}+\frac{\eta}{12n}),1]\subset T_{last}$, that is all $[a_{i},b_{i}]$ with $b_{i}\geq1-(\frac{\eta}{4}+\frac{\eta}{12n})$ will be grouped into the last group $T_{last}$;\\
(ii) If $a_{i}-b_{i-1}\leq \frac{\eta}{12n}$, then $[a_{i-1},b_{i-1}]$ and $[a_{i},b_{i}]$ are in the same group, say $T_{j}$;\\
(iii) If $a_{i}-b_{i-1}>\frac{\eta}{12n}$,  $a_{i}>\frac{\eta}{4}+\frac{\eta}{12n}$ and $ b_{i-1}<1-(\frac{\eta}{4}+\frac{\eta}{12n})$, then $[a_{i-1},b_{i-1}]$ and $[a_{i},b_{i}]$ are in different groups, say,  $T_{j}$ and $T_{j+1}$.

Denote $T_{last}$ by $T_{l_{\Delta}}$ (i.e., $l_{\Delta}=last$) --- if there is no confusion, we will call $T_{l_{\Delta}}$ by $T_{l}$. Let $t_{0}=0$, $s_0=\max\{\frac{\eta}{4}, \max T_0\}$, $t_l=\min\{1-\frac{\eta}{4}, \min T_l\}$, $s_{l}=1$; and for $1<i<l$, let $t_{i}=\min T_{i}$, and  $s_{i}=\max T_{i}$. Then $T_{i}\subset[t_{i},s_{i}]$. With the above notation, we have the following lemma.
\end{numb}

\begin{lemma} \label{5.8}
 With the above notation, we have the following\\
(a) length $[t_{0},s_{0}]\leq\frac{\eta}{4}+\frac{\eta}{6}$;\\
(b)  length $[t_{l},s_{l}]\leq\eta/4+\eta/6$;\\ 
(c) length $[t_{i},s_{i}]\leq\eta/6$ for $i\in\{1,2,\cdots,l-1\}$;\\
(d) $t_{i+1}-s_{i}>\frac{\eta}{12n}$ for $i\in\{0, 1,2,\cdots,l-1\}$.
\end{lemma}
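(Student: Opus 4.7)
The plan is to deduce (a)--(d) from two quantitative estimates on the ordered eigenvalue functions $\lambda_1(x)\leq\cdots\leq\lambda_n(x)$ of $\phi'$ on the connected simplex $\Delta$. The choice of simplicial decomposition in \ref{5.4} ensures $\mathrm{diam}(\Delta)\leq\tau/2$, so by \ref{5.3}(a) the tuples $Sp\phi'|_x$ and $Sp\phi'|_{x'}$ can be paired within $\eta/(24n^2)$ for any $x,x'\in\Delta$. Since both spectra are written in increasing order, each ordered coordinate $\lambda_i$ varies by at most $\eta/(24n^2)$ across $\Delta$, so $\lambda_i(\Delta)$ is an interval of length $\leq\eta/(24n^2)$. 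Each connected component $[a_j,b_j]$ of $Sp\phi|_\Delta$ is a chained union of $n_j$ such intervals with $\sum_j n_j=n$, which yields simultaneously the per-component bound $b_j-a_j\leq n_j\cdot\eta/(24n^2)$ and the aggregate bound
\[
\sum_{j}(b_j-a_j)\;\leq\;\frac{\eta}{24n},
\]
together with the fact that the total number of components is at most $n$, hence any single group has at most $n-1$ internal gaps.

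Part (c) is then immediate: in a middle group $T_j$ ($1\leq j\leq l-1$) every interval has $a_i>\eta/4+\eta/(12n)$ and $b_i<1-\eta/4-\eta/(12n)$, so rule (iii) of \ref{partition} forces every gap inside $T_j$ to be $\leq\eta/(12n)$, and
\[
s_j - t_j \;\leq\; \tfrac{\eta}{24n} + (n-1)\cdot\tfrac{\eta}{12n} \;\leq\; \tfrac{\eta}{24}+\tfrac{\eta}{12} \;=\; \tfrac{\eta}{8} \;<\;\tfrac{\eta}{6}.
\]
For (a), the case $\max T_0\leq\eta/4$ is trivial (then $s_0=\eta/4$), so assume $s_0=\max T_0=b_m$. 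Let $I$ be the largest index with $a_I\leq\eta/4+\eta/(12n)$; this exists because the hypothesis $\sharp(Sp\phi'|_x\cap[0,\eta/4])\geq k$ forces some component to have $a_j\leq\eta/4$. The remaining intervals $I{+}1,\ldots,m$ of $T_0$ are attached via rule (ii), so each of the $m-I\leq n-1$ corresponding gaps is $\leq\eta/(12n)$, and the global length bound gives
\[
b_m \;\leq\; a_I + \sum_{i=I}^{m}(b_i-a_i) + \sum_{i=I+1}^{m}(a_i-b_{i-1}) \;\leq\; \tfrac{\eta}{4}+\tfrac{\eta}{12n}+\tfrac{\eta}{24n}+(n-1)\cdot\tfrac{\eta}{12n} \;\leq\; \tfrac{\eta}{4}+\tfrac{\eta}{8},
\]
well below $\eta/4+\eta/6$. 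Part (b) is identical under the symmetry $t\mapsto 1-t$.

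For (d), the gap between two consecutive groups is $>\eta/(12n)$ by construction. For a boundary between two middle groups, this is literally rule (iii) applied to the two adjacent original intervals (which must lie in different groups, ruling out rule (ii), and both satisfy the middle-group location conditions). For the boundary $i=0$: if $s_0=\max T_0$ the same rule (iii) applies (noting that the last interval of $T_0$ satisfies $b<1-\eta/4-\eta/(12n)$, else rule (i) would place it in $T_l$ and force $T_0=T_l$); if instead $s_0=\eta/4>\max T_0$, then the first interval of $T_1$ satisfies $a>\eta/4+\eta/(12n)=s_0+\eta/(12n)$ because it is not in $T_0$ by rule (i). The boundary $i=l-1$ is symmetric. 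The main bookkeeping obstacle is that the single parameter $\eta/(24n^2)$ has to supply both the per-interval bound and the global total, so the computation for (a) must be organized as (global sum) plus (number of chained gaps)$\cdot\eta/(12n)$ to avoid double-counting along a rule-(ii) chain; once arranged this way, each of (a)--(d) falls just under the stated threshold.
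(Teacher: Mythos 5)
Your proof is correct and follows essentially the same strategy as the paper: both start from the observation that, because $\mathrm{diam}(\Delta)\leq\tau/2<\tau$ (by \ref{5.4}) and hence $Sp\phi'|_x,Sp\phi'|_{x'}$ pair within $\eta/(24n^2)$ for all $x,x'\in\Delta$ (by \ref{5.3}(a)), each ordered eigenvalue function $\lambda_i$ varies by at most $\eta/(24n^2)$ across $\Delta$; this yields a bound on the length of each component $[a_j,b_j]$, which together with the gap bounds from rules (i)--(iii) of \ref{partition} gives (a)--(d). The only cosmetic difference is that you bound component lengths by writing $[a_j,b_j]=\bigcup\lambda_i(\Delta)$ directly (giving the aggregate estimate $\sum_j(b_j-a_j)\leq\eta/(24n)$), whereas the paper covers $[a_j,b_j]$ by the $\eta/(24n^2)$-neighborhoods of the finitely many points of $Sp\phi|_{C_\Delta}\cap[a_j,b_j]$ to obtain the per-component bound $b_j-a_j\leq\eta/(12n)$ -- the same numerology, just packaged differently.
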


\begin{proof} From (ii) of \ref{partition}, we know that $\min T_{i+1}-\max T_i> \frac{\eta}{12n}$; and from (i), we know that $\min T_1> \frac{\eta}{4}+\frac{\eta}{12n}$ and $\max T_{l-1}< 1-(\frac{\eta}{4}+\frac{\eta}{12n})$. Hence (d) holds.

The following fact is well known.

Fact: For any two sequences $0\leq\lambda_{1}\leq\lambda_{2}\leq\cdots\leq\lambda_{n}\leq1$ and $0\leq\mu_{1}\leq\mu_{2}\leq\cdots\leq\mu_{n}\leq1$, $\{\lambda_{i}\}_{i=1}^{n}$ and $\{\mu_{i}\}_{i=1}^{n}$ can be paired within $\sigma$ if and only if $|\lambda_{i}-\mu_{i}|<\sigma$ for all $i\in\{1,2,\cdots,n\}$.

Note that $\Delta$ is path connected and $Sp\phi|_{\Delta}=\bigcup\limits_{i=1}\limits^{k'}[a_{i},b_{i}]$ with $[a_{i},b_{i}]\cap[a_{j},b_{j}]=\emptyset$ if $i\neq j$. We conclude that for any $z,z'\in\Delta$ and $i$, $$\sharp(Sp\phi|_{z}\cap[a_{i},b_{i}])=\sharp(Sp\phi|_{z'}\cap[a_{i},b_{i}])$$
counting multiplicity. In our construction, we know that $Sp\phi|_{z}$ and $Sp\phi|_{z'}$ can be paired within $\eta/24n^{2}$, using the above mentioned fact. We know also that
\begin{center}
$Sp\phi|_{z}\cap[a_{i},b_{i}]$ ~~~~~and~~~~ $Sp\phi|_{z'}\cap[a_{i},b_{i}]$
\end{center}
can be paired within $\eta/24n^{2}$. Consequently $$[a_{i},b_{i}]\subset_{\eta/24n^{2}}[a_{i},b_{i}]\cap Sp\phi|_{C_{\Delta}},$$
where $C_{\Delta}$ is the center of simplex $\Delta$. Note that $Sp\phi|_{C_{\Delta}}\cap[a_{i},b_{i}]$ is a finite set with at most $n$ points in $[0,1]$ and $\eta/24n^{2}$-neighborhood of each point is a closed interval of length at most $(\eta/24n^{2})\cdot2=\eta/12n^{2}$. Hence we have
$$\mbox{length}[a_{i},b_{i}]\leq(\eta/12n^{2})\cdot n=\eta/12n.$$

Furthermore, each $T_{j}$ contains at most $n$ intervals $[a_{i},b_{i}]$.  And for each consecutive pair of intervals in $T_{j}$ ($0<j<l$),  we have  
$$[a_{i},b_{i}]\cup[a_{i+1},b_{i+1}]\subset\big(\frac{\eta}{4}+\frac{\eta}{12n},1-(\frac{\eta}{4}+\frac{\eta}{12n})\big)$$ 
and the distance between them $a_{i+1}-b_{i}\leq\eta/12n$. That is,  the gap between them is at most $\eta/12n$. Hence for each $i\in\{1,2,\cdots,l-1\}$, the length of $[t_{i},s_{i}]$ is at most $$n\cdot\frac{\eta}{12n}+(n-1)\cdot\frac{\eta}{12n}<\eta/6$$
(at most $n$ possible intervals and $n-1$ gaps). 

Also,
$$\mbox{length} [t_{0},s_{0}]<\frac{\eta}{4}+\frac{\eta}{6}$$
and$$\mbox{length} [t_{l},s_{l}]<\frac{\eta}{4}+\frac{\eta}{6}~~.$$\\
\end{proof}

\begin{numb} \label{5.9}
For each simplex $\Delta$ with face $\Delta'\subset\Delta$, we use $T_{i}(\Delta)$ and $T_{j}(\Delta')$ to denote the sets $[t_{i}(\Delta),s_{i}(\Delta)]$ or $[t_{j}(\Delta'),s_{j}(\Delta')]$ as in \ref{partition}, corresponding to $\Delta$ and $\Delta'$. 
Then evidently, the decomposition $$Sp\phi|_{\Delta'}=\bigcup\limits_{j}(T_{j}(\Delta')\cap Sp\phi|_{\Delta'}),$$ is a refinement of the decomposition $Sp\phi|_{\Delta}=\cup(T_{i}(\Delta)\cap Sp\phi|_{\Delta})$--- that is, if two elements $\lambda,~\mu\in Sp\phi|_{\Delta'}$ are in the  set $T_{j}(\Delta')$  for  a same index $j$, then they are in the  set $T_{i}(\Delta)$ for a same index $i$.
\end{numb}

\begin{numb} \label{MAP DECOM} For each simplex $\Delta$, consider the homomorphism 
$$\phi:I_{k}\rightarrow PM_{\bullet}(C(\Delta))P=A|_{\Delta}~.$$
Since $Sp\phi|_{\Delta}\subset\bigcup_{j=0}^{l}T_{j}(\Delta)=\bigcup\limits_{j=1}\limits^{l}[t_{j},s_{j}]$,
$\phi$ factors through as $$I_{k}\rightarrow\oplus_{j=0}^{l}I_{k}|_{[t_{j},s_{j}]}\xrightarrow{\oplus\phi_{j}}PM_{\bullet}(C(\Delta))P.$$
Let $P_{j}(x)=\phi_{j}(\textbf{1}_{k}|_{[t_{j},s_{j}]})(x)$ for each $x\in\Delta$. Then $P_{j}(x)$ are mutually orthogonal projections  satisfying $$\sum\limits_{j=0}\limits^{l}P_{j}(x)=P(x).$$ By the assumption  of Theorem \ref{THM}, we have $rank(P_{0})\geq k$ and $rank(P_{l})\geq k$.
\end{numb}

\begin{numb}\label{0-dim} Now we define $\psi:I_{k}\rightarrow A|_{\Delta}$ simplex by simplex,  starting with vertices --- the zero dimensional simplices.

Let $v\in X$ be a vertex. As in \ref{partition}, we write
$$Sp\phi|_{\{v\}}=\bigcup\limits_{i=0}\limits^{l}[t_{i},s_{i}]\bigcap Sp\phi|_{\{v\}},$$
where $0=t_{0}<s_{0}<t_{1}\leq s_{1}<\cdots<t_{l-1}\leq s_{l-1}<t_{l}<s_{l}=1$, with $$[0,\eta/4]\subset[t_{0},s_{0}]\subset[0,\eta/2],$$ $$[1-\eta/4,1]\subset[t_{l},s_{l}]\subset[1-\eta/2,1],$$ $$0\leq s_{i}-t_{i}<\eta/6~~~~\mbox{ for each}~~~i\in \{1,2,\cdots, l-1\}, ~~and$$
$$t_{i+1}-s_{i}>\eta/12n ~~~\mbox { for each } i\in \{0,1,2,\cdots, l-1\}.$$

Recall that $\phi|_{\{v\}}: I_{k}\rightarrow P(v)M_{\bullet}(\mathbb{C})P(v)$ (as in \ref{MAP DECOM}) can be written as $$\phi|_{\{v\}}=diag(\phi_{0},\phi_{1},\cdots,\phi_{l}): I_{k}\rightarrow\mathop{\bigoplus}\limits_{i=0}^{l}P_{i}M_{\bullet}(\mathbb{C})P_{i}\subset P(v)M_{\bullet}(\mathbb{C})P(v),$$
where $\phi_{i}\!=\!\phi|_{\{v\}}^{[t_i,s_i]}:I_{k}\rightarrow I_{k}|_{[t_{i},s_{i}]}\rightarrow P_{i}M_{\bullet}(\mathbb{C})P_{i}$ and $P(v)=\sum_{i=0}^{l}P_{i}$.  (Here and below, we refer the reader to \ref{specrestriction} for the notation $\phi|_{X_1}^{Z_j}$ ($X_1\subset X$),  which makes sense, provided that $Sp(\phi|_{X_1}) \subset \bigcup_j Z_j$, where $\{Z_j\}$ are mutually disjoint closed subsets of the spectrum of the domain algebra of $\phi$.)

From now on, we will use  $diag_{0\leq i \leq l}~( \phi_i)$ to denote $diag(\phi_{0},\phi_{1},\cdots,\phi_{l})$.

Define $\psi_{i}:I_{k}|_{[t_{i},s_{i}]}\rightarrow P_{i}M_{\bullet}(\mathbb{C})P_{i}$ by
\begin{center}
$\psi_{i}=\phi_{i}$~~~~~ if~~~~~ $1\leq i\leq l-1$,
\end{center}
(That is, we do not modify $\phi_{i}$ for $1\leq i\leq l-1.)$
For $i=0$ (the case $i=l$ is similar) we do the following modification. There is a unitary $u\in M_{\bullet}(\mathbb{C})$ such that

$$\phi_{0}(f)(v)=u\left(
   \begin{array}{ccc}
     \left(
        \begin{array}{cccc}
         {f}(\underline0) & \ & \ & \ \\
          \ &\hspace{-.1in} {f}(\underline0) & \ & \ \\
          \ & \ & \hspace{-.2in} \ddots & \ \\
          \ & \ & \ & \hspace{-.1in} {f}(\underline0) \\
        \end{array}
      \right)_{\!\!\!j\times j}
      & \ & \ \\
     \ &\hspace{-.5in} \left.
           \begin{array}{cccc}
             f(\xi_{1}) & \ & \ & \ \\
             \ & f(\xi_{2}) & \ & \ \\
             \ & \ & \ddots & \ \\
             \ & \ & \ & f(\xi_{\bullet\bullet}) \\
           \end{array}
         \right.
      & \ \\
     \ & \ & \hspace{-.3in}\left.
               \begin{array}{ccc}
                 0 & \ & \ \\
                 \ & \ddots & \ \\
                 \ & \ & 0 \\
               \end{array}
             \right.
      \\
   \end{array}
 \right)
 u^{\ast},$$
where $\xi_{i}\in(0,s_{1}]$, $0<\xi_{1}\leq\xi_{2}\leq\cdots\leq\xi_{\bullet\bullet}\leq s_{1}.$ Or write it as 
$$\phi_0(f)(v)=u\mbox{diag}(f(\underline{0})^{\sim j}, f(\xi_1), f(\xi_2),\cdots,f(\xi_{\bullet\bullet}),0,\cdots,0) u^*.$$
If $0<j\leq k$ then we do not do any modification and just let $\psi_{0}=\phi_{0}$. If $j>k$, then write $j=kk'+j'$ with $0<j'\leq k$,  choose $\xi'\in(0,\xi_{1})$,  and define
$$\psi_0(f)(v)=u\mbox{diag}(f(\underline{0})^{\sim j'}, f(\xi')^{\sim k'}, f(\xi_1), f(\xi_2),\cdots,f(\xi_{\bullet\bullet}),0,\cdots,0) u^*.$$
That is, change $kk'$ terms of ${f}(\underline0)$ in the diagonal of the definition of $\phi_{0}$ to $k'$ terms of the form $f(\xi')$.
If $j=0$, then we change $\xi_{1}$ to 0, that is,
$$\psi_0(f)(v)=u\mbox{diag}(f(\underline{0})^{\sim k}, f(\xi_2),\cdots,f(\xi_{\bullet\bullet}),0,\cdots,0) u^*.$$

Since $|\xi'-0|<\frac{\eta}{2}$ and $|\xi_{1}-0|<\frac{\eta}{2}$, we have $\|\phi_{0}(f)-\psi_{0}(f)\|<\frac{\varepsilon}{6}$, for all $f\in F$ (see \ref{5.2}).
We modify  $\phi_{l}$ in a similar way  to define $\psi_{l}$. Let $$\psi|_{\{v\}}=diag(\psi_{0},\psi_{1},\cdots,\psi_{l}):I_{k}\rightarrow P(v)M_{\bullet}(\mathbb{C})P(v),$$
where $\psi_{i}\!=\!\psi|_{\{v\}}^{[t_i,s_i]}$.  Then $\|\phi(f)-\psi(f)\|<\frac{\varepsilon}{6}$ for all $f\in F$.
\end{numb}

\begin{remark}\label{5.12} Let us emphasize that the homomorphisms $\psi_i$ are the same as $\phi_i$ for \\ $i\in\{1,2,\cdots, l_{\{v\}}-1\}$. But we do  modify $\phi_0$ and $\phi_l$ ($l= l_{\{v\}}$) to get $\psi_0$ and $\psi_l$. 

Also, we have  $$Sp(\psi_{0})\subset[0,s_{0}]~~~~\mbox{and}~~~~Sp(\psi_{l_{\{v\}}})\subset[t_{l_{\{v\}}},1].$$ Furthermore, $\psi_{i}(1)=\phi_{i}(1)$ for any $i$, and consequently $\psi(1)=\phi(1).$
\end{remark}

\begin{numb}\label{5.13} Now consider 1-simplex $\Delta\!=\![a,b]\subset X$. We  need to define $\psi|_{\Delta}=\psi|_{[a,b]}$ from previously defined  $\psi|_{\{a\}}$ and $\psi|_{\{b\}}$. According to \ref{partition},  write $Sp\phi|_{\Delta}=\bigcup\limits_{j=1}\limits^{l_{\Delta}}Sp\phi|_{\Delta}\cap T_{j}({\Delta})$ with $T_{0}({\Delta})=[0, s_{0}(\Delta)]$ and $T_{l_{\Delta}}({\Delta})=[t_{l_{\Delta}}(\Delta),1]$. Recall that in the definition of $\psi|_{\{a\}}$, $\psi|_{\{b\}}$, we use the decomposition $$\phi|_{\{a\}}=diag_{1\leq j\leq l_{\{a\}}}(\phi|_{\{a\}}^{T_{j}{(\{a\})}})$$ and $$\phi|_{\{b\}}=diag_{1\leq j\leq l_{\{b\}}}(\phi|_{\{b\}}^{T_{j}{(\{b\})}})$$ and only modified $\phi_{0}=\phi|_{\{a\}}^{[0,s_{0}\{a\}]}$ (or  $\phi|_{\{b\}}^{[0,s_{0}\{b\}]}$) and $\phi_{l\{a\}}=\phi|_{\{a\}}^{[t_{l_{\{a\}}}(\{a\}), 1]}$ (or  $\phi|_{\{b\}}^{[t_{l_{\{b\}}}(\{b\}), 1]}$).

For $\Delta=[a,b]$, let us consider the decomposition
$$\phi|_{\Delta}=\bigoplus_{j=1}^{l_{\Delta}}\phi|_{\Delta}^{[t_{j}(\Delta),s_{j}(\Delta)]}.$$
From the above, we know that for any $0<j<l_{\Delta}$, the definition of $\psi|_{\{a\}}^{[t_{j}(\Delta),s_{j}(\Delta)]}$  is the same as $(\phi|_{\Delta}^{[t_{j}(\Delta),s_{j}(\Delta)]})|_{\{a\}}$, since the decomposition 
$$Sp\phi|_{\{a\}}=\bigcup\limits_{j=1}^{l_{\{a\}}}T_{j}(\{a\})\cap Sp\phi|_{\{a\}}$$ 
is finer than the decomposition 
$$Sp\phi|_{\{a\}}=\bigcup\limits_{j=1}^{l_{\Delta}}T_{j}(\Delta)\cap Sp\phi|_{\{a\}}$$ 
(see \ref{5.9}) and  only partial maps involving $[0,s_{1}\{a\}]$ ($\subset[0,s_{1}(\Delta)]$) and $[t_{l_{\{a\}}}(\{a\}),1]$ ($\subset[t_{l_{\Delta}}(\Delta),1]$) are modified. The same is true for $\phi|_{\{b\}}$ and $\psi|_{\{b\}}$. Therefore,  we can define the partial maps $$\psi|_{\Delta}^{[t_{j}(\Delta),s_{j}(\Delta)]}=\phi|_{\Delta}^{[t_{j}(\Delta),s_{j}(\Delta)]}$$
for $0<j<l_{\Delta}$. 
The only parts need to be modified are $\phi|_{\Delta}^{[0,s_{0}(\Delta)]}$ and $\phi|_{\Delta}^{[t_{l}(\Delta),1]}$.
\end{numb}

\begin{numb}\label{5.14} Now denote $\phi|_{\Delta}^{[0,s_{0}(\Delta)]}(\Delta=[a,b])$ by $\phi_{0}$ and $\phi|_{\Delta}^{[t_{l}(\Delta),1]}$ by $\phi_{l}$, and $s_{0}(\Delta)$ by $s_{0}$, $t_{l(\Delta)}(\Delta)$ by $t_{l}$. Now we have two unital homomorphisms
$$\phi_{0}:I_{k}|_{[0,s_{0}]}\rightarrow P_{0}M_{\bullet}C(\Delta)P_{0}$$
and
$$\phi_{l}:I_{k}|_{[t_{l},1]}\rightarrow P_{l}M_{\bullet}C(\Delta)P_{l},$$
where $P_{0}$, $P_{l}$ are defined as in \ref{MAP DECOM}. We will do the modification of $\phi_{0}$ to get $\psi_{0}$ (the one for $\phi_{l}$ is completely the same).

We already have the definitions of $\psi_{0}|_{\{a\}}$ and $\psi_{0}|_{\{b\}}$. Note that $P_{0}\in M_{\bullet}(C(\Delta))$ can be written as $\phi(h_{[0,s_{0}]})$, where $h_{[0,s_{0}]}$ is the test function appeared  in \ref{5.3}, which is equal to $1$ on $[0,s_{0}]$ and $0$ on $[s_{0}+\frac{\eta}{12n},1]$.
(Note that $\phi(h_{[0,s_{0}]})$ is a projection since $Sp\phi\subset[0,s_{0}]\cup[t_{1},1]$ and $t_{1}>s_{0}+\frac{\eta}{12n}$.) Consequently,
 $$\|P_{0}(x)-P_{0}(y)\|<\frac{\varepsilon}{12(n+1)^{2}}$$
 for all $x,y\in[a,b]=\Delta$ (see (b) of \ref{5.3}).

There exists a unitary $W\in M_{\bullet}(C(\Delta))$ such that
\begin{center}
$P_{0}(x)=W(x)$ $\left(
                  \begin{array}{cc}
                  \left(
        \begin{array}{ccc}
                    1 & \ & \ \\
                    \ & \ddots & \  \\
                    \ & \ & 1  \\
                    \end{array}
      \right )_{\!\!\!rank(P_0)\times rank(P_0)}
        & \ \\
     \ & \hspace{-.5in}\left.
           \begin{array}{ccc}
                     0 & \ & \ \\
                    \ & \ddots & \ \\
                     \ & \ & 0 \\
                     \end{array}
         \right.
      \\
       \end{array}
                \right)$ $W^{\ast}(x)$,
\end{center}
for all $ x\in \Delta$
and $\displaystyle{\|W(x)-W(y)\|<\frac{\varepsilon}{6(n+1)^{2}}}$.

To define $$\psi_{0}:I_{k}|_{[0,s_{0}]}\rightarrow P_{0}M_{\bullet}(C(\Delta))P_{0},$$ it suffices to define $$AdW\circ\psi_{0}:I_{k}|_{[0,s_{0}]}\rightarrow M_{rank(P_{0})}(C(\Delta)),$$ since $$W^{\ast}P_{0}W=\left(
                                                                                                                    \begin{array}{cc}
                                                                                                                      \textbf{1}_{rank(P_{0})} & 0 \\
                                                                                                                      0 & 0 \\
                                                                                                                    \end{array}
                                                                                                                  \right)
.$$

Note that $$\sharp(Sp\widetilde{\psi}_{0}|_{\{a\}}\cap\{0\})=rank(P_{0})~~~~(mod\ k),$$ where $$\widetilde{\psi}_{0}:C[0,s_{0}]\hookrightarrow I_{k}|_{[0,s_{0}]}\xrightarrow{\psi_{0}}P_{0}(\{a\})M_{\bullet}(\mathbb{C})P_{0}(\{a\}).$$ 
(This is true since the multiplicities of all the spectra other than $0$ are multiples  of $k$ ). Similarly, $$\sharp(Sp\widetilde{\psi}_{0}|_{\{b\}}\cap\{0\})=rank(P_{0})~~~~(mod\ k).$$ Also,  from the definition of $\psi$ on the vertices (namely on $\{a\}$ and $\{b\}$) from \ref{0-dim}, we know that $$\sharp(Sp\widetilde{\psi}_{0}|_{\{b\}}\cap\{0\})=\sharp(Sp\widetilde{\psi}_{0}|_{\{a\}}\cap\{0\})\triangleq k'\leq k.$$

\end{numb}

\begin{lemma}\label{5.15}
Suppose that two unital homomorphisms $$\alpha',\alpha'': I_{k}|_{[0,s_{0}]}\rightarrow M_{rank(P_{0})}(\mathbb{C})$$
satisfy that $$0<\sharp(Sp\widetilde{\alpha}'\cap\{0\})=\sharp(Sp\widetilde{\alpha}''\cap\{0\})\leq k$$ counting multiplicity, where $\widetilde{\alpha}'$ (or $\widetilde{\alpha}''$) is the composition $$C[0,s_{0}]\hookrightarrow I_{k}|_{[0,s_{0}]}\xrightarrow{\alpha'}M_{rank(P_{0})}(\mathbb{C}))~~(\mbox{or}~~C[0,s_{0}]\hookrightarrow I_{k}|_{[0,s_{0}]}\xrightarrow{\alpha''}M_{rank(P_{0})}(\mathbb{C})),$$
then there is a homomorphism $$\alpha:I_{k}|_{[0,s_{0}]}\rightarrow M_{rank(P_{0})}(C[a,b]),$$
such that $0<\sharp(Sp\widetilde{\alpha}|_{t}\cap\{0\})\leq k$, for all $t\in[a,b]$ and $\alpha|_{\{a\}}=\alpha'$, $\alpha|_{\{b\}}=\alpha''$, where again $\widetilde{\alpha}$ is the composition $$C[0,s_{0}]\hookrightarrow I_{k}|_{[0,s_{0}]}\xrightarrow{\alpha}M_{rank(P_{0})}(C[a,b]).$$
\begin{proof}
 We can regard $[a,b]=[0,1]$. There are two unitaries $u,v\in M_{rank(P_{0})}(\mathbb{C})$, a number $k'\in\{1,2,\ldots,k\}$,  and two finite sequences of  numbers: $$0<\xi_{1}\leq\xi_{2}\leq\cdots\leq\xi_{\bullet}\leq s_{0}$$ $$0<\xi'\leq\xi_{2}'\leq\cdots\leq\xi_{\bullet}'\leq s_{0}$$ such that\\
$$\alpha'(f)=u\left(
   \begin{array}{cc}
     \left(
        \begin{array}{ccc}
         {f}( \underline0) & \ & \ \\
          \ & \ddots & \  \\
          \ & \ & {f}( \underline0)  \\
        \end{array}
      \right )_{\!\!\!k'\times k'}
      & \ \\
     \ & \hspace{-.5in}\left.
            \begin{array}{ccc}
               f(\xi_{1}) & \ & \ \\
                 \ & \ddots & \ \\
                 \ & \ & f(\xi_{\bullet}) \\
            \end{array}
          \right.
      \\
   \end{array}
 \right)
u^{\ast}$$
and$$\alpha''(f)=v\left(
   \begin{array}{cc}
     \left(
        \begin{array}{ccc}
          {f}( \underline0) & \ & \ \\
          \ & \ddots & \  \\
          \ & \ & {f}( \underline0)  \\
        \end{array}
      \right )_{\!\!\!k' \times k'}
      & \ \\
     \ &\hspace{-.5in} \left.
            \begin{array}{ccc}
               f(\xi_{1}') & \ & \ \\
                 \ & \ddots & \ \\
                 \ & \ & f(\xi_{\bullet}') \\
            \end{array}
          \right.
      \\
   \end{array}
 \right)
v^{\ast}.$$

Let $u(t)$, $0\leq t\leq\frac{1}{2}$ be any unitary path with $u(0)=u$, $u(\frac{1}{2})=v$. Define $\alpha$ as follows.

For $0\leq t\leq\frac{1}{2},$
\begin{center}
$\alpha(f)(t)=u(t){\small \left(
   \begin{array}{cc}
     \left(
        \begin{array}{ccc}
          {f}( \underline0) & \ & \ \\
          \ & \ddots & \  \\
          \ & \ & {f}( \underline0)  \\
        \end{array}
      \right)_{\!\!\!k'\times k'}
      & \ \\
     \ & \hspace{-.5in}\left.
            \begin{array}{cccc}
               f(\xi_{1}) & \ & \ & \ \\
               \ &  f(\xi_{2}) & \ & \ \\
               \ &  \ & \ddots & \ \\
               \ &  \ & \ & f(\xi_{\bullet}) \\
            \end{array}
          \right.
      \\
   \end{array}
 \right)}
u^{\ast}(t);$
\end{center}
and for $\frac{1}{2}\leq t\leq1, $
 $$\alpha(f)(t)=v{\small\left(
   \begin{array}{cc}
     \left(
        \begin{array}{ccc}
          {f}( \underline0) & \ & \  \\
                \ & \hspace{-.1in}\ddots & \ \\
                \ & \ & \hspace{-.1in}{f}( \underline0)  \\
        \end{array}
      \right )_{\!\!\!k'\times k'}
      & \ \\
     \ & \hspace{-.5in}\left.
           \begin{array}{ccc}
              f((2-2t)\xi_{1}+(2t-1)\xi_{1}') & \ & \ \\
                 \ & \hspace{-.3in}\ddots & \ \\
                \ & \ & \hspace{-.1in}f((2-2t)\xi_{\bullet}+(2t-1)\xi_{\bullet}') \\
           \end{array}
         \right.
      \\
   \end{array}
 \right)}
v^{\ast}.$$
Then $\alpha$ is a desired homomorphism.
\end{proof} 
\end{lemma}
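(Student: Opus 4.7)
The plan is to exploit the explicit normal form for unital homomorphisms out of $I_k|_{[0,s_0]}$ and reduce the problem to two elementary homotopies glued together: a unitary rotation followed by a linear interpolation of the interior spectral data.

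First, I would identify $[a,b]$ with $[0,1]$ and put both $\alpha'$ and $\alpha''$ into the standard diagonal form described in \ref{1.16}--\ref{1.17}. Each is, up to a unitary ($u$ for $\alpha'$, $v$ for $\alpha''$), a block-diagonal map in which the irreducible representation $\underline{0}$ appears as a $1\times 1$ scalar block and the remaining mass consists of $k\times k$ blocks $f(\xi_j)$ or $f(\xi'_j)$ with $0<\xi_1\le\cdots\le\xi_{\bullet}\le s_0$ and $0<\xi'_1\le\cdots\le\xi'_{\bullet}\le s_0$. The hypothesis fixes the common multiplicity $k'\in\{1,\dots,k\}$ of $\underline{0}$ in both normal forms, and then the identity $k'+k\bullet=rank(P_0)$ forces the number $\bullet$ of interior blocks to agree.

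Next I would construct $\alpha$ piecewise. On $t\in[0,\tfrac12]$, pick any continuous unitary path $u(t)$ in $U(rank(P_0))$ with $u(0)=u$ and $u(\tfrac12)=v$, which exists by path-connectedness of the unitary group, and conjugate the (fixed) $\alpha'$-diagonal by $u(t)$. On $t\in[\tfrac12,1]$, keep the unitary fixed at $v$ and linearly interpolate each spectral parameter via $\xi_j(t)=(2-2t)\xi_j+(2t-1)\xi'_j$. The two halves glue continuously at $t=\tfrac12$ since $\xi_j(\tfrac12)=\xi_j$, and at every $t$ the resulting formula is manifestly a unital $*$-homomorphism from $I_k|_{[0,s_0]}$ because its building blocks are precisely the irreducible representations of that algebra (scalar at $\underline{0}$, $k\times k$ at interior points).

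The only point requiring care is the spectral constraint $0<\sharp(Sp\widetilde{\alpha}|_t\cap\{0\})\le k$ along the whole path, and this is where I expect the ``main'' (though mild) obstacle to sit. Throughout both stages the $k'$ scalar $\underline{0}$-blocks are untouched, so $\sharp(Sp\widetilde{\alpha}|_t\cap\{0\})$ is identically $k'$, which already satisfies $0<k'\le k$; one only needs to check that none of the interpolated $\xi_j(t)$ accidentally lands at $0$, which is immediate because a convex combination of two numbers in $(0,s_0]$ stays in $(0,s_0]$. Hence the construction succeeds, no new estimates are required, and the bookkeeping (matching normal forms block-by-block and preserving strict positivity) is the entire content of the argument.
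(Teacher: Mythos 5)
Your proposal is correct and follows essentially the same path as the paper's proof: put both homomorphisms into the standard normal form (noting the common $\underline{0}$-multiplicity $k'$ forces the number of $k\times k$ interior blocks to match), rotate the unitary on $[0,\tfrac12]$, then linearly interpolate the interior spectral parameters on $[\tfrac12,1]$. Your added remark that $\sharp(Sp\widetilde{\alpha}|_t\cap\{0\})$ stays exactly $k'$ throughout, because the scalar blocks are untouched and a convex combination of points in $(0,s_0]$ remains in $(0,s_0]$, is the correct justification for the spectral constraint, which the paper leaves implicit.
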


\begin{numb}
Applying the above lemma, we can define $$\alpha:I_{k}|_{[0,s_{0}]}\rightarrow M_{rank(P_{0})}(C[a,b])$$
such that $$\imath\circ\alpha|_{\{a\}}=AdW(a)\circ\psi_{0}|_{\{a\}}$$ and $$\imath\circ\alpha|_{\{b\}}=AdW(b)\circ\psi_{0}|_{\{b\}},$$ where $\imath:M_{rank(P_{0})}(\mathbb{C})\rightarrow M_{\bullet}(\mathbb{C})$ is defined by $$\imath(A)=\left(
                                                                                                \begin{array}{cc}
                                                                                                  A & 0 \\
                                                                                                  0 & 0 \\
                                                                                                \end{array}
                                                                                              \right)
.$$
Define $$\psi_{0}:I_{k}|_{[0,s_{0}]}\rightarrow P_{0}M_{\bullet}(C(\Delta))P_{0}$$ by $\psi_{0}=AdW^*\circ(\imath\circ\alpha)$ --- that is, for any $t\in[a,b]=\Delta$,
$$\psi_{0}(f)(t)=W(t)\left(
                       \begin{array}{cc}
                         \alpha(f)(t) & 0 \\
                         0 & 0 \\
                       \end{array}
                     \right)
W^{\ast}(t).$$

As mentioned in \ref{5.13}, when we modify $\phi|_{[a,b]}$ to obtain $\psi|_{[a,b]}$, we only need to modify $\phi_{0}=\phi|_{[a,b]}^{[0,s_{0}]}$ and $\phi_{l}=\phi|_{[a,b]}^{[t_{l},1]}$. The modifications of $\phi_{l}$ to $\psi_{l}$ are the same as the one  from $\phi_{0}$ to $\psi_{0}$. Thus we have the definition of $\psi|_{[a,b]}=diag_{0\leq i\leq l}(\psi_{i})$.
\end{numb}

\begin{numb}\label{5.17} Let us estimate the difference of $\phi|_{[a,b]}$ and $\psi|_{[a,b]}$ on the finite set $F\subset I_{k}$. Note that $$\phi|_{[a,b]}=diag_{0\leq i\leq l}(\phi_{i}),\ \ \psi|_{[a,b]}=diag_{0\leq i\leq l}(\psi_{i})$$
and $\phi_{i}=\psi_{i}$, for $0< i <l$. So we only need to estimate $\|\phi_{0}(f)-\psi_{0}(f)\|$ and $\|\phi_{l}(f)-\psi_{l}(f)\|$.

Note that $\phi_{0}$ and $\psi_{0}$ are from $I_{k}|_{[0,s_{0}]}$ to $P_{0}M_{\bullet}(C[a,b])P_{0}$, where $P_{0}$ is as in \ref{5.14}. And both $AdW\circ\phi_{0}$ and $AdW\circ\psi_{0}$ can be regarded as $\imath\circ\phi'$ and $\imath\circ\psi'$ for
$$\phi',\psi':I_{k}|_{[0,s_{0}]}\rightarrow M_{rank(P_{0})}(C[a,b]),$$
where $$\imath:M_{rank(P_{0})}(C[a,b])\rightarrow M_{\bullet}(C[a,b])$$ is given by $$\imath(A)=\left(
                                                                                        \begin{array}{cc}
                                                                                          A & 0 \\
                                                                                          0 & 0 \\
                                                                                        \end{array}
                                                                                      \right)
.$$

Claim:  Let $\alpha: I_{k}|_{[0,s_{0}]}\rightarrow M_{rank(P_0)}(C[a,b])$ be any unital homomorphism. Then we have
$$\Bigg\|\alpha(f)-\left(
                                                                                                                      \begin{array}{ccc}
                                                                                                                       {f}( \underline0) & \ & \ \\
                                                                                                                        \ & \ddots & \ \\
                                                                                                                        \ & \ & {f}( \underline0) \\
                                                                                                                      \end{array}
                                                                                                                    \right)
_{\!\!\!rank(P_{0})}\Bigg\|\leq\sup\limits_{0<\xi\leq s_{0}}\|f(\xi)-f(0)\|.$$
In fact, for each $x\in[a,b]$, there exist $u_{x}\in U(M_{rank(P_{0})}(\mathbb{C}))$ and $k'\in\{1,2,\cdots,k\}$ and $0\leq\xi_{1}\leq\xi_{2}\leq\cdots\leq\xi_{\bullet\bullet}\leq s_{0}$ such that
$$\alpha(f)(x)=u_{x}\left(
   \begin{array}{cc}
     \left(
        \begin{array}{ccc}
          {f}( \underline0) & \ & \ \\
          \ & \ddots & \  \\
          \ & \ & {f}( \underline0)  \\
        \end{array}
      \right )_{\!\!\!k'\times k'}
      & \ \\
     \ & \hspace{-.5in}\left.
            \begin{array}{ccc}
               f(\xi_{1}) & \ & \ \\
                 \ & \ddots & \ \\
                 \ & \ & f(\xi_{\bullet\bullet}) \\
            \end{array}
          \right.
      \\
   \end{array}
 \right)
u_{x}^{\ast}.$$
It follows that 

$$\|\alpha(f)(x)-{f}( \underline0)\cdot\textbf{1}_{rank(P_{0})}\| $$
\begin{align*}
 &= \left\| u_{x} {~\small \left[\left(
    \begin{array}{cc}
     \left(
        \begin{array}{ccc}
         \hspace{-.1in}{f}( \underline0) & \ & \ \\
          \ & \hspace{-.2in}\ddots & \  \\
          \ & \ & \hspace{-.1in}{f}( \underline0)  \\
        \end{array}
      \right )_{\!\!\!k' \times k'}
      & \ \\
     \ &\hspace{-.5in} \left.
            \begin{array}{ccc}
               f(\xi_{1}) & \ & \ \\
                 \ & \hspace{-.3in}\ddots & \ \\
                 \ & \ & \hspace{-.2in}f(\xi_{\bullet\bullet}) \\
            \end{array}
          \right.
      \\
   \end{array}
 \right)-\left(
   \begin{array}{cc}
     \left(
        \begin{array}{ccc}
          \hspace{-.1in}{f}( \underline0) & \ & \ \\
          \ & \hspace{-.1in}\ddots & \  \\
          \ & \ & \hspace{-.1in}{f}( \underline0)  \\
        \end{array}
      \right)_{\!\!\!k' \times k'}
      & \ \\
     \ & \hspace{-.5in}\left.
            \begin{array}{ccc}
               f(0) & \ & \ \\
                 \ & \hspace{-.3in}\ddots & \ \\
                 \ & \ & \hspace{-.2in}f(0) \\
            \end{array}
          \right.
      \\
   \end{array}
 \right)\right]}u^*_{x}\right\|\\
& { =\left\|\left(
     \begin{array}{cc}
       \left(
          \begin{array}{cccc}
            0 & \ & \ & \  \\
            \ & 0 & \ & \  \\
            \ & \ & \ddots & \  \\
            \ & \ & \ & 0  \\
          \end{array}
        \right)_{\!\!\!k'\times k'}
        & \ \\
       \ & \hspace{-.5in}\left.
              \begin{array}{ccc}
                f(\xi_{1})-f(0) & \ & \ \\
                                  \ & \ddots & \ \\
                                 \ & \ & f(\xi_{\bullet\bullet})-f(0) \\
              \end{array}
            \right.
        \\
     \end{array}
   \right)
           \right\|} \\
          & \leq\sup\limits_{0\leq\xi\leq s_{0}}\|f(\xi)-f(0)\parallel.
  \end{align*}
Thus, the claim is true.

  $
  $

It follows from the claim that
$$\|\phi(f)(t)-\psi(f)(t)\|\leq2\max\big(\sup\limits_{0\leq\xi\leq s_{0}}\|f(\xi)-f(0)\|,\sup\limits_{t_{l}\leq\xi\leq 1}\|f(\xi)-f(1)\|\big)\leq2\cdot\frac{\varepsilon}{6}$$
\\for all $t\in[a,b]$, and $f\in F$, as $|s_{0}-0|<\frac{\eta}{2}$ and $|t_{l}-1|<\frac{\eta}{2}$. 
Hence we have the definition of $\psi$ on the 1-skeleton $X^{(1)}\subset X$ satisfying
$$\|\phi(f)(t)-\psi(f)(t)\|<\frac{\varepsilon}{3}$$
for all $t\in X^{(1)}$ and $f\in F$.

\end{numb}

\begin{numb}\label{5.18}
Now fix a 2-simplex $\Delta\subset X$. We will define $$\psi|_{\Delta}: I_{k}\rightarrow PM_{\bullet}(C(\Delta))P$$ based on the previous definition of $$\psi|_{\partial\Delta}:I_{k}\rightarrow PM_{\bullet}C(\partial\Delta)P.$$
Again, write $$\phi|_{\Delta}=diag_{0\leq i\leq l(\Delta)}(\phi_{i}),$$
where $$\phi_{i}=\phi|_{\Delta}^{[t_{i}(\Delta),s_{i}(\Delta)]}=I_{k}|_{[t_{i},s_{i}]}\rightarrow P_{i}M_{\bullet}(C(\Delta))P_{i}$$ and $P_{i}$ are projections defined on $\Delta$ with $$\sum\limits_{i=0}\limits^{l(\Delta)}P_{i}(x)=P(x),~~~~ \forall x\in\Delta.$$

For each face $\Delta'\subset\partial\Delta$, we know that the decomposition $$Sp\phi|_{\Delta'}=\bigcup\limits_{j=0}^{l_{\Delta'}}T_{j}(\Delta')\bigcap Sp\phi|_{\Delta'}=\bigcup\limits_{j=0}^{l_{\Delta'}}[t_{j}(\Delta'),s_{j}(\Delta')]\bigcap Sp\phi|_{\Delta'}$$
is finer than the decomposition
$$Sp\phi|_{\Delta'}=\bigcup\limits_{j=0}^{l_{\Delta}}T_{j}(\Delta)\bigcap Sp\phi|_{\Delta'}=\bigcup\limits_{j=0}^{l_{\Delta}}[t_{j}(\Delta),s_{j}(\Delta)]\bigcap Sp\phi|_{\Delta'}.$$
Consequently, 
$$[0,s_{0}(\Delta')]\subset[0,s_{0}(\Delta)]~~~~~\mbox{ and}~~~~~[t_{l(\Delta')},1]\subset[t_{l(\Delta)},1].$$

Note that when we define $\psi|_{\Delta'}$ by modifying $\phi|_{\Delta'}$,  we only modify the parts of
$\phi|_{\Delta'}^{[0,s_{0}(\Delta')]}$ and
$\phi|_{\Delta'}^{[t_{l(\Delta')},1]}$ --- that is, 
$$\phi|_{\Delta'}^{[s_{0}(\Delta')+\delta,t_{l(\Delta')}{(\Delta')}-\delta]}=\psi|_{\Delta'}^{[s_{0}(\Delta')+\delta,t_{l(\Delta')}{(\Delta')}-\delta]},$$
where $\delta\in(0,\frac{\eta}{12n})$. Hence
$$\phi|_{\Delta'}^{[t_{1}(\Delta),s_{l(\Delta)-1}(\Delta)]}=\psi|_{\Delta'}^{[t_{1}(\Delta),s_{l(\Delta)-1}(\Delta)]}$$
since$$t_{1}(\Delta)>s_{0}(\Delta)+\frac{\eta}{12n}\geq s_{0}(\Delta')+\delta$$and $$s_{l(\Delta)-1}<t_{l(\Delta)}(\Delta)-\frac{\eta}{12n}<t_{l(\Delta')}(\Delta')-\delta.$$
Because $\Delta'\subset\partial\Delta$ is an arbitrary face, we have
$$\phi|_{\partial\Delta}^{[t_{1}(\Delta),s_{l(\Delta)-1}(\Delta)]}=\psi|_{\partial\Delta}^{[t_{1}(\Delta),s_{l(\Delta)-1}(\Delta)]}.$$

Therefore similar to  what we did on 1-simplexes,  define $$\psi|_{\Delta}^{[t_{j}(\Delta),s_{j}(\Delta)]}=\phi|_{\Delta}^{[t_{j}(\Delta),s_{j}(\Delta)]}$$
for $j\in\{1,2,\cdots,l(\Delta)-1\}$. Then we only need to modify $\phi|_{\Delta}^{[0,s_{0}(\Delta)]}=\phi_{0}$ and $\phi|_{\Delta}^{[t_{l(\Delta)},1]}=\phi_{l}$. We will only do it for $\phi_{0}$.
\end{numb}

\begin{numb}\label{5.19} 
We have the definition of unital homomorphism $$\psi_{0}|_{\partial\Delta}:I_{k}|_{[0,s_{0}]}\rightarrow P_{0}M_{\bullet}(C(\partial\Delta))P_{0}$$
such that $$\sharp(Sp\widetilde{\psi}_{0}|_{x}\cap\{0\})=k'\in\{1,2,\cdots,k\}$$ for any $x\in\partial\Delta$, where  $\Delta$ is a 2-simplex and $\widetilde{\psi}_{0}$
is defined as the composition $$C[0,s_{0}]\hookrightarrow I_{k}|_{(0,s_{0}]}\xrightarrow{\psi_{0}}P_{0}M_{\bullet}(C(\partial\Delta))P_{0}.$$
We need to extend it to a homomorphism $$\psi_{0}|_{\Delta}:I_{k}|_{[0,s_{0}]}\rightarrow P_{0}M_{\bullet}(C(\Delta))P_{0}$$
such that $\sharp(Sp\widetilde{\psi}_{0}|_{\Delta}\cap\{0\})=k'$ for all $x\in\Delta$. Once this extension is obtained, as in \ref{5.17}, we can use the claim in \ref{5.17} to prove that $\phi|_{\Delta}^{[0,s_{0}]}$ and $\psi|_{\Delta}^{[0,s_{0}]}$ are approximately equal to within $\frac{\varepsilon}{3}$ for all $f\in F$. 
(Note that in the argument of \ref{5.17}, the estimation is true which do not depend on the choice of the extension. It only uses $|s_{0}-0|<\eta/2<\eta$, and $\|f(t)-f(t')\|<\frac{\varepsilon}{6}$ whenever $|t-t'|<\eta.$)

There is a $W\in U(M_{\bullet}(C(\Delta)))$ such that $$P_{0}(x)=W(x)\left(
                                                                              \begin{array}{cc}
                                                                                \textbf{1}_{rank(P_{0})} & 0 \\
                                                                                0 & 0 \\
                                                                              \end{array}
                                                                            \right)
W^{\ast}(x)$$

for all  $x\in \Delta$. Again, if we can extend $$(AdW\circ\psi_{0})|_{\partial\Delta}:I_{k}|_{[0,s_{0}]}\rightarrow \left(
                                                                                                     \begin{array}{cc}
                                                                                                       \textbf{1}_{rank(P_{0})} & 0 \\
                                                                                                       0 & 0 \\
                                                                                                     \end{array}
                                                                                                   \right)
M_{\bullet}(C(\Delta))\left(
                                                                                                     \begin{array}{cc}
                                                                                                       \textbf{1}_{rank(P_{0})} & 0 \\
                                                                                                       0 & 0 \\
                                                                                                     \end{array}
                                                                                                   \right),$$
to$$\alpha|_{\Delta}: I_{k}|_{[0,s_{0}]}\rightarrow \left(
                                                                                                     \begin{array}{cc}
                                                                                                       \textbf{1}_{rank(P_{0})} & 0 \\
                                                                                                       0 & 0 \\
                                                                                                     \end{array}
                                                                                                   \right)
M_{\bullet}(C(\Delta))\left(
                                                                                                     \begin{array}{cc}
                                                                                                      \textbf{1}_{rank(P_{0})} & 0 \\
                                                                                                       0 & 0 \\
                                                                                                     \end{array}
                                                                                                   \right),$$
then we can set $\psi_{0}|_{\Delta}=AdW^*\circ\alpha|_{\Delta}$ to obtain our extension. But $(AdW\circ\psi_{0})|_{\partial\Delta}$ (or $\alpha|_{\Delta}$) should be regarded as a homomorphism from $I_{k}|_{[0,s_{0}]}$ to $M_{rank(P_{0})}(C(\partial\Delta))$ (or to $M_{rank(P_{0})}(C(\Delta))$. Hence the construction of $\psi_{0}|_{\Delta}$ follows from the following lemma.
\end{numb}

\begin{lemma}\label{Ext two}  
 Let $\beta: I_{k}|_{[0,s_{0}]}\rightarrow M_{n'}(C(S^{1}))$ be a unital homomorphism such that for any $x\in S^{1}$, $$\sharp(Sp(\beta\circ\imath)_{x}\cap\{0\})=k'\in\{1,2,\cdots,k\}$$ for some fixed  $k'$ (not depending on $x$), where $\imath: C[0,s_{0}]\rightarrow I_{k}|_{[0,s_{0}]}$. Then there is a homomorphism $$\overline{\beta}: I_{k}|_{[0,s_{0}]}\rightarrow M_{n'}(C(D)),$$
(where $D$ is the disk with boundary $S^{1}$)
such that $$\sharp Sp(\overline{\beta}\circ\imath)_{x}\bigcap\{0\}=k'$$ for all $x\in D$ and $\pi\circ\overline{\beta}=\beta$, where $$\pi:M_{n'}(C(D))\rightarrow M_{n'}(C(S^{1}))$$ is the restriction.

\noindent\textbf{Proof.}
Let $h(t)=t\cdot\textbf{1}_{k}$ be the function in the center of $I_{k}|_{[0,s_{0}]}$. Then $\beta(h)$ is a self adjoint element in $M_{n'}(C(S^{1}))$. For each $z\in S^{1}$, write the eigenvalue of $\beta(h)(z)$ in increasing order$$0=\lambda_{1}(z)\leq\lambda_{2}(z)\leq\cdots\leq\lambda_{n'}(z)\leq s_{0}.$$
Then $\lambda_{1},\lambda_{2},\cdots,\lambda_{n'}$ are continuous functions from $S^{1}$ to $[0,s_{0}]$. From the assumption, we know that
$\lambda_{1}(z)=\lambda_{2}(z)=\cdots=\lambda_{k'}(z)=0$ and for all $j>k'$, $\lambda_{j}(z)>0$.  (Note that each $\lambda_{j}(j>k')$ repeats some multiple of $k$ times.)   Consequently, there is $\xi\in(0,s_{0}]$ such that $\lambda_{j}(z)\geq\xi$ for all $j>k'$. Hence $\beta$ factors through as $$I_{k}|_{[0,s_{0}]}\rightarrow I_{k}|_{\{0\}}\oplus I_{k}|_{[\xi,s_{0}]}\xrightarrow{diag(\beta_{0},\beta_{1})}M_{n'}(C(S^{1})),$$
where $$\beta_{0}: I_{k}|_{\{0\}}(=\mathbb{C})\rightarrow Q_{0}M_{n'}(C(S^{1}))Q_{0}$$
and $$\beta_{1}: I_{k}|_{[\xi,s_{0}]}(=M_{k}(C[\xi,s_{0}]))\rightarrow Q_{1}M_{n'}(C(S^{1}))Q_{1}$$ with $$Q_{0}+Q_{1}=\textbf{1}_{n'}\in M_{n'}(C(S^{1})).$$ Note that $rank(Q_{0})=k'$, and $rank(Q_1)=n'-k'$, which is a multiple of $k$. Write $rank(Q_{1})=n'-k'=kk''$. There is a unitary $u\in M_{n}(C(S^{1}))$ such that
$$uQ_{0}u^{\ast}=\left(
                  \begin{array}{cc}
                    \textbf{1}_{k'} & 0 \\
                    0 & 0 \\
                  \end{array}
                \right)
~~~~~~\mbox{and}~~~~~~ uQ_{1}u^{\ast}=\left(
     \begin{array}{cc}
       0 & 0 \\
       0 & \textbf{1}_{n'-k'} \\
     \end{array}
   \right).$$
Hence$$Adu^*\circ\beta=diag(\beta_{0}',\beta_{1}')$$with
$$\beta_{0}': I_{k}|_{\{0\}}(=\mathbb{C})\rightarrow M_{k'}(C(S^{1}))$$
and$$\beta_{1}': I_{k}|_{[\xi,s_{0}]}(=M_{k}(C[\xi,s_{0}]))\rightarrow M_{kk''}(C(S^{1})).$$
Evidently,$$\beta_{0}'(c)=\left(
                            \begin{array}{ccc}
                              c & \ & \ \\
                              \ & \ddots & \ \\
                              \ & \ & c \\
                            \end{array}
                          \right)
=c\cdot\textbf{1}_{k'}\in M_{k'}(C(S^{1})),~~~~~\forall c\in\mathbb{C}.$$
For $\beta_{1}'$, there exist  $\beta'':C[\xi,s_{0}]\rightarrow M_{k''}(C(S^{1}))$ and a unitary $V\in M_{kk''}(C(S^{1}))$ such that
\begin{center}
$V\beta_{1}'(f)V^{\ast}=\beta''\otimes id_{k}(f),~~~~~~\forall f\in M_{k}(C[\xi,s_{0}])$.
\end{center}

Let $$W=\left(
         \begin{array}{cc}
          \textbf{ 1}_{k'} & 0 \\
           0 & V \\
         \end{array}
       \right)
\cdot u.$$ Then $$(AdW^*\circ\beta)(f)=\left(
                                            \begin{array}{cccc}
                                              {f}(\underline0) & \ & \ & \ \\
                                              \ & \ddots & \ & \ \\
                                              \ & \ & {f}(\underline0) & \ \\
                                              \ & \ & \ & \beta''\otimes id_{k}(f) \\
                                            \end{array}
                                          \right).$$

Let $m$ be the winding number of the map
\begin{center}
$S^{1}\ni z\longmapsto$ det$(W(z))\in \mathbb{T}\subseteq\mathbb{C}.$
\end{center}

Then $W\in U(M_{n'}(C(S^{1})))$ is homotopic to $W'\in M_{n'}(C(S^{1}))$ defined by \\
$$W'(z)=\left(
                        \begin{array}{ccccc}
                          z^m & \ & \ & \ & \ \\
                          \ & 1 & \ & \ & \ \\
                          \ & \ & 1 & \ & \ \\
                          \ & \ & \ & \ddots & \ \\
                          \ & \ & \ & \ & 1 \\
                        \end{array}
                      \right)
, ~~~\forall z\in S^1=\T.$$
Let $\{w_r\}_{\frac{1}{2}\leq r\leq1}$ be a unitary path in $M_{n'}(C(S^1))$ with 
$$w_{\frac{1}{2}}(z)=W'(z)~~~\mbox{ and }~~~w_1(z)=W(z), ~~~~~\forall z\in S^1.$$ 
Evidently the homomorphism $$\beta'':C[\xi,s_{0}]\rightarrow M_{k''}(C(S^{1}))$$ is homotopic to the homomorphism $$\beta''':C[\xi,s_{0}]\rightarrow M_{k''}(C(S^{1}))$$
defined by $$\beta'''(f)(e^{2\pi i\theta})=f(\xi)\textbf{1}_{k''}$$ --- that is,  $\beta'''(f)(e^{ i\theta})$ is the constant matrix $f(\xi)\textbf{1}_{k''}$ (which does not depend on $\theta$). There is a path $\{\beta_{r}\}_{0\leq r\leq\frac{1}{2}}$ of homomorphisms $$\beta_{r}:C([\xi,s_{0}])\rightarrow M_{k''}(C(S^{1}))$$
such that $\beta_{\frac{1}{2}}=\beta'' $ and $\beta_{0}=\beta'''.$

Finally, regard $D=\{re^{i\theta},0\leq r\leq1\}$, and define $\overline{\beta}:~ I_{k}|_{[0,s_{0}]}\rightarrow M_{n'}(C(D))$ by
$$ \overline{\beta}(f)(re^{i\theta})=
\left\{ \begin{array}
      {r@{\quad \quad}l}
  w_r^{\ast}(e^{i\theta})\left(
               \begin{array}{cc}
               \hspace{-.1in} \left(
        \begin{array}{ccc}
          \hspace{-.1in}{f}(\underline0) & \ & \ \\
          \ & \hspace{-.2in}\ddots & \  \\
          \ & \ &\hspace{-.2in}{f}( \underline0)  \\
        \end{array}
      \hspace{-.08in}\right)_{\!\!\!k'\times k'} &  \\
                  & \hspace{-.4in}(\beta''\otimes id_{k})(f)(e^{i\theta})\\
               \end{array}
             \right)w_r(e^{i\theta}),~ \mbox{if} ~\frac{1}{2}\leq r\leq1
   \\ \left(
               \begin{array}{cc}
                \left(
        \begin{array}{ccc}
          \hspace{-.1in}{f}( \underline0) & \ & \ \\
          \ & \hspace{-.2in}\ddots & \  \\
          \ & \ & \hspace{-.2in}{f}( \underline0)  \\
        \end{array}
      \right)_{\!\!\!k'\times k'} &  \\
                  & \hspace{-.3in}(\beta_{r}\otimes id_{k})(f)(e^{i\theta})\\
               \end{array}
             \right) ,~~~~~~~\mbox{if}~~ 0\leq r\leq\frac{1}{2}.
   &
   \end{array} \right.$$

This homomorphism is as desired.
\end{lemma}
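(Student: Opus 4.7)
The plan is to reduce $\beta$ to a standard form that makes the extension obvious, then use the contractibility of the target in the appropriate sense to fill in the disk. First, I would analyse the central element $h(t)=t\cdot\mathbf{1}_k\in I_k|_{[0,s_0]}$. Its image $\beta(h)$ is a self-adjoint in $M_{n'}(C(S^1))$, and its eigenvalues $0\le\lambda_1(z)\le\cdots\le\lambda_{n'}(z)\le s_0$ are continuous in $z$. The spectral assumption means exactly $k'$ of the eigenvalues are $0$ at every $z$, and the remaining $n'-k'$ occur in repeated groups of $k$ (hence $k\mid n'-k'$). By continuity and compactness of $S^1$ there is a gap $\xi>0$ such that $\lambda_j(z)\ge\xi$ for all $j>k'$, so $\beta$ factors through $I_k|_{\{0\}}\oplus I_k|_{[\xi,s_0]}=\C\oplus M_k(C[\xi,s_0])$, giving mutually orthogonal $Q_0,Q_1\in M_{n'}(C(S^1))$ with $\mathrm{rank}(Q_0)=k'$ and $\mathrm{rank}(Q_1)=kk''$.

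Next, using the triviality of vector bundles over $S^1$ of the ranks involved, I would choose a unitary $u\in M_{n'}(C(S^1))$ putting $Q_0$ and $Q_1$ into the standard block form, and a further unitary $V\in M_{kk''}(C(S^1))$ so that $V\beta_1'V^\ast=\beta''\otimes\mathrm{id}_k$ for some $\beta'':C[\xi,s_0]\to M_{k''}(C(S^1))$; this is possible because $I_k|_{[\xi,s_0]}\cong M_k(C[\xi,s_0])$, and up to inner automorphism any unital $*$-homomorphism into $M_{kk''}(C(S^1))$ of this matricial form diagonalises over $S^1$. Setting $W=\mathrm{diag}(\mathbf{1}_{k'},V)u$, one obtains the clean diagonal description of $\mathrm{Ad}\,W^\ast\circ\beta$ displayed in the lemma.

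The two remaining obstacles are (a) $W$ may carry nontrivial winding in its determinant, and (b) $\beta''$ is a priori not extendable as a homomorphism to $C(D)$. For (a), let $m$ be the winding number of $\det(W):S^1\to\T$; then $W$ is homotopic in $U(M_{n'}(C(S^1)))$ to $W'=\mathrm{diag}(z^m,1,\ldots,1)$, because $\pi_1(U(n'))=\Z$ is detected by $\det$. Fix a unitary path $\{w_r\}_{1/2\le r\le 1}$ with $w_{1/2}=W'$ and $w_1=W$. For (b), the map $\beta''$ is completely determined by the self-adjoint $\beta''(\mathrm{id}_{[\xi,s_0]})\in M_{k''}(C(S^1))$, whose spectrum lies in the contractible interval $[\xi,s_0]$; linearly interpolating this element to the constant $\xi\cdot\mathbf{1}_{k''}$ gives a homotopy $\{\beta_r\}_{0\le r\le 1/2}$ of homomorphisms from $C[\xi,s_0]$ to $M_{k''}(C(S^1))$ connecting $\beta''=\beta_{1/2}$ to the constant $\beta_0=\beta'''(f)=f(\xi)\mathbf{1}_{k''}$ (which is genuinely independent of $\theta$ and hence descends to the single point at $r=0$).

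Finally, I would define $\bar\beta$ on $D=\{re^{i\theta}:0\le r\le 1\}$ by the two-piece formula given in the lemma: for $r\in[1/2,1]$ use $\mathrm{Ad}\,w_r^\ast$ applied to the block-diagonal form $\mathrm{diag}(f(\underline{0})^{\sim k'},(\beta''\otimes\mathrm{id}_k)(f)(e^{i\theta}))$, and for $r\in[0,1/2]$ drop the conjugation and replace $\beta''$ by $\beta_r$. Continuity at $r=1/2$ is built in, continuity at $r=0$ holds because $\beta_0$ is $\theta$-independent, and the boundary value at $r=1$ is $\beta$ by construction. The spectral condition $\sharp\,Sp(\bar\beta\circ\imath)_x\cap\{0\}=k'$ is automatic at every interior point because the $f(\underline{0})^{\sim k'}$ summand always contributes exactly $k'$ zeros while the other summand — either $\beta''\otimes\mathrm{id}_k$ or $\beta_r\otimes\mathrm{id}_k$ — has spectrum in $[\xi,s_0]$. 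The main technical obstacle is (a): if one had naively tried to extend $W$ itself to the disk, one would be blocked by the winding number; absorbing this winding into the choice of path $w_r$ and then filling in the inner sub-disk $r\in[0,1/2]$ with an unconjugated homotopy sidesteps this issue cleanly.
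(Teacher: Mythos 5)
Your proposal is correct and follows essentially the same route as the paper: same spectral-gap factorization through $I_k|_{\{0\}}\oplus I_k|_{[\xi,s_0]}$, same diagonalization by $W=\mathrm{diag}(\mathbf{1}_{k'},V)u$, same use of the winding number and the two-stage fill-in of the disk by a unitary path $w_r$ on the outer annulus and a homotopy $\beta_r$ on the inner disk. The one small point you assert without justification — continuity of $\bar\beta$ at $r=1/2$ — does hold, but for the nontrivial reason that $w_{1/2}=W'=\mathrm{diag}(z^m,1,\ldots,1)$ commutes with $\mathrm{diag}(f(\underline 0)\mathbf{1}_{k'},\ \ast)$ because the upper-left block is scalar; it is worth saying so explicitly.
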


\begin{numb}\textbf{Proof of Theorem \ref{THM}} 
From \ref{5.3}---\ref{Ext two}, we have constructed $$\psi:I_{k}\rightarrow PM_{\bullet}(C(X))P$$ with the property $$\|\phi(f)-\psi(f)\|<\frac{\varepsilon}{3}$$ for all $f\in F$. And importantly,  for each $x\in X$, $\sharp(Sp\widetilde{\psi}|_{x}\cap\{0\})$
is a constant $k'\in\{1,2,\cdots,k\}$ and $\sharp(Sp\widetilde{\psi}|_{x}\cap\{1\})$ is also a constant $k_{1}'\in\{1,2,\cdots,k\}$, where $\widetilde{\psi}$ is the composition $$C[0,1]\hookrightarrow I_{k}\xrightarrow{\psi}PM_{\bullet}(C(X))P.$$

Let $h(t)=t\cdot\textbf{1}_{k}\in I_{k}$ be the canonical function in the center of $I_{k}$. Then $\psi(h)\in PM_{\bullet}(C(X))P$ is a self adjoint element. For each $x\in X$, denote the eigenvalues of
$\psi(h)(x)$ by $$0\leq\lambda_{1}(x)\leq\lambda_{2}(x)\leq\cdots\leq\lambda_{rank(P)}(x)\leq1.$$
Then all $\lambda_{i}(x)$ are continuous functions from $X$ to $[0,1].$ Furthermore,
$$\lambda_{1}(x)=\lambda_{2}(x)=\cdots=\lambda_{k'}(x)=0,$$
$$0<\lambda_{k'+1}(x)\leq\lambda_{k'+2}(x)\leq\cdots\leq\lambda_{rank(P)-k_{1}'}(x)<1,$$
and $$\lambda_{rank(P)-k_{1}'+1}(x)=\lambda_{rank(P)-k_{1}'+2}(x)=\cdots=\lambda_{rank(P)}(x)=1.$$

Let
\begin{center}
$\xi_{1}=\min\limits_{x\in X}\lambda_{k'+1}(x)>0$~~~~~ and~~~~~~ $\xi_{2}=\max\lambda_{rank(P)-k_{1}'}(x)<1.$
\end{center}
Then $$Sp\psi\subset\{0\}\cup[\xi_{1},\xi_{2}]\cup\{1\}.$$ That is,  $\psi$ factors through as
$$I_{k}\rightarrow \mathbb{C}\oplus M_{k}(C[\xi_{1},\xi_{2}])\oplus\mathbb{C}\xrightarrow{diag(\alpha_{0},\psi_{1},\alpha_{1})}PM_{\bullet}(C(X))P,$$ where we identify $I_{k}|_{\{0\}}=\mathbb{C}$
and $I_{k}|_{\{1\}}=\mathbb{C}$.

Let $Q_{0}=\alpha_{0}(1), Q_{1}=\alpha_{1}(1)$ and $P_{1}=\psi_{1}(\textbf{1}_{M_{k}(C([\xi_{1},\xi_{2}]))})$. Finally,  regarding $\psi_{1}$ as
$$M_{k}(C[0,1])\xrightarrow{restriction}M_{k}(C([\xi_{1},\xi_{2}]))\xrightarrow{\psi_{1}}P_{1}M_{\bullet}(C(X))P_{1},$$
we finish the proof of Theorem \ref{THM}.\\
$~~~~~~~~~~~~~~~~~~~~~~~~~~~~~~~~~~~~~~~~~~~~~~~~~~~~~~~~~~~~~~~~~~~~~~~~~~~~~~~~~~~~~~~~~~~~~~~~~~~~~~~~~~~~~~~~~~\Box$

\end{numb}

\begin{numb} From the definition of $\psi$ in the above procedure, for every  $x\in X$, the map $$\psi|_{x}:I_{k}\xrightarrow{\psi}PM_{\bullet}(C(X))P\xrightarrow{evaluate\ at\ x}P(x)M_{\bullet}(\mathbb{C})P(x)$$
is defined when the construction of
$$\psi|_{\Delta}:I_{k}\rightarrow PM_{\bullet}(C(\Delta))P$$
is carried out for the unique simplex $\Delta$ such that $x\in\stackrel{\circ}{\Delta}$ (the interior of $\Delta$). And when we define $\psi|_{\Delta}$ by modifying $\phi|_{\Delta}$, the only modifications are made on the two parts $\phi|_{\Delta}^{[0,s_{0}(\Delta)]}$ and $\phi|_{\Delta}^{[t_{l}(\Delta),1]}$. Consequently,
$$Sp\phi|_{x}\cap(s_{0}(\Delta),t_{l}(\Delta))=Sp\psi|_{x}\cap(s_{0}(\Delta),t_{l}(\Delta))$$
as sets with multiplicity. On the other hand for any simplex $\Delta$, $s_{0}(\Delta)<\frac{\eta}{2}$ and $t_{l(\Delta)}(\Delta)>1-\frac{\eta}{2}$. Hence $$Sp\phi|_{x}\cap[\frac{\eta}{2},1-\frac{\eta}{2}]=Sp\psi|_{x}\cap[\frac{\eta}{2},1-\frac{\eta}{2}].$$ If we further assume that $\phi$ has property $sdp(\eta/4,\delta)$, then $\psi$ has property $sdp(\eta,\delta)$. As a consequence, we can use the decomposition theorem for $$\psi_{1}:M_{k}(C[0,1])\rightarrow P_{1}M_{\bullet}(C(X))P_{1}$$ to study the homomorphisms ~$\phi,\psi:I_{k}\rightarrow PM_{\bullet}(C(X))P.$~
Note that the homomorphisms $f\mapsto {f}(\underline0)Q_{0}$ and $f\mapsto {f}(\underline1)Q_{1}$  factor through the  $C^{\ast}$-algebra $\C$.

\end{numb}

\begin{numb} Lemma \ref{Ext two} is not true for the case $k'=0$. In fact, there exists a unital homomorphism $\alpha: M_{k}(\mathbb{C})\rightarrow M_{k}(C(S^{1})),$
which can not be extended to a homomorphism $\overline{\alpha}: M_{k}(\mathbb{C})\rightarrow M_{k}(C(D)).$ 
Let $\pi_{s_0}: ~I_k|_{[0,s_0]}\to M_k(\C)$ be the map defined by evaluating at the point $s_0$. Then $\beta=\alpha\circ\pi_{s_0}: ~I_k|_{[0,s_0]}\to M_k(C(S^1))$ can not be extended to $\overline{\beta}: ~I_k|_{[0,s_0]}\to M_k(C(D))$ such that  $\sharp Sp(\overline{\beta}\circ\imath)_{x}\bigcap\{0\}=k'=0$ for all $x\in D$, where $\imath$ is the canonical map from $M_k(\mathbb{C})$ to $I_k|_{[\zeta,s_0]}$ for some $0<\zeta<s_0$. 
\end{numb}

\section{Decomposition Theorem II}

 Our next task is to study the possible decomposition of $\phi: C(X)\rightarrow M_{l}(I_{k_{2}})$ for $X$ being $[0,1],~S^{1}$ or $\mathrm{T}_{\uppercase\expandafter{\romannumeral2},k}$. The cases of $[0,1],~S^{1}$ are more or less known (see \cite{Ell1} and \cite{EGJS}). Let us assume $X$ is a 2-dimensional connected simplicial complex.

The following  lemma is essentially due to H. Su (See \cite{Su}). The case of $X=\noindent\textbf{graph}$ was stated in \cite{Li1}.

\begin{lemma}\label{5.24} For any connected simplicial complex $X$, a finite set $F\subset C(X)$ which generates $C(X)$, $\eta>0$ and a positive interger $n>0$, there is a $\delta>0$, such that for any two unital homomorphisms $\phi,\psi: C(X)\rightarrow M_{n}(\mathbb{C}),$ if $\|\phi(f)-\psi(f)\|<\delta$~ for all $f\in F,$
~then $Sp(\phi)$ and $Sp(\psi)$ can be paired within $\eta$.
\end{lemma}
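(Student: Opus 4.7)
The plan is to argue by contradiction, using compactness of $P^nX$ together with the standard fact that a unital $*$-homomorphism $C(X)\to M_n(\mathbb{C})$ is classified up to unitary equivalence by its spectrum. Suppose the conclusion fails, so there exist $\eta>0$ and sequences of unital homomorphisms $\phi_m,\psi_m:C(X)\to M_n(\mathbb{C})$ with $\|\phi_m(f)-\psi_m(f)\|<1/m$ for every $f\in F$, but $dist(Sp(\phi_m),Sp(\psi_m))\geq\eta$ in $P^nX$.

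The first step is to upgrade the approximation from the generating set $F$ to all of $C(X)$. The subset $S\subset C(X)$ of those $g$ for which $\|\phi_m(g)-\psi_m(g)\|\to 0$ contains $F$, is closed under sums and adjoints, is closed under products (since $\phi_m$ and $\psi_m$ are $*$-homomorphisms of norm at most $1$), and is norm-closed; since $F$ generates $C(X)$ as a $C^*$-algebra, $S=C(X)$.

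The second step uses compactness of $P^nX$: passing to a subsequence, I may assume $Sp(\phi_m)\to\alpha=[a_1,\ldots,a_n]$ and $Sp(\psi_m)\to\beta=[b_1,\ldots,b_n]$ in $P^nX$, with $dist(\alpha,\beta)\geq\eta>0$. For any $g\in C(X)$ one has $tr(\phi_m(g))=\sum_{i=1}^n g(x_i^{(m)})\to\sum_i g(a_i)$, where $Sp(\phi_m)=[x_1^{(m)},\ldots,x_n^{(m)}]$, and similarly $tr(\psi_m(g))\to\sum_i g(b_i)$. Combined with $|tr(\phi_m(g))-tr(\psi_m(g))|\leq n\|\phi_m(g)-\psi_m(g)\|\to 0$ from the first step, this forces $\sum_i g(a_i)=\sum_i g(b_i)$ for every $g\in C(X)$. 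By the Riesz representation theorem the atomic measures $\sum_i\delta_{a_i}$ and $\sum_i\delta_{b_i}$ coincide, so $\alpha=\beta$ in $P^nX$, contradicting $dist(\alpha,\beta)\geq\eta$.

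The step I expect to require the most care is the upgrade from $F$ to all of $C(X)$: one needs the ``closed under products'' part to survive the limit in $m$, which uses that both sequences are uniformly bounded (by $1$) as contractive $*$-homomorphisms, and that the closure of $S$ under norm limits relies on the same uniform bound. Once this is in place, the remainder is a clean application of compactness of $P^nX$ and of the classification of unital homomorphisms $C(X)\to M_n(\mathbb{C})$ by their spectra.
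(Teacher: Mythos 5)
Your argument is correct. Let me note that the paper does not give its own proof of Lemma~\ref{5.24}; it merely cites Lemma~2.2 and Lemma~2.3 of Su's memoir and the argument 2.1.3 of Li's memoir. So there is nothing in the paper to compare against line by line, but your compactness/contradiction argument is a clean and self-contained derivation and is the standard way such local-stability lemmas are established.

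A few checks worth making explicit. Your Step~1 is sound: the set $S$ of $g$ with $\|\phi_m(g)-\psi_m(g)\|\to 0$ is a unital $*$-subalgebra because $\phi_m,\psi_m$ are contractive $*$-homomorphisms (e.g. $\|\phi_m(gh)-\psi_m(gh)\|\le\|g\|\,\|\phi_m(h)-\psi_m(h)\|+\|\phi_m(g)-\psi_m(g)\|\,\|h\|$), and it is norm-closed by the usual $3\varepsilon$-argument using the uniform bound $\|\phi_m\|=\|\psi_m\|=1$; since $F$ generates and $1\in S$, indeed $S=C(X)$. In Step~2, compactness of $P^nX$ (which follows because $X$ is a finite simplicial complex, hence compact metric) yields the convergent subsequences, continuity of the metric gives $dist(\alpha,\beta)\ge\eta$, uniform continuity of each $g$ gives $\mathrm{tr}\,\phi_m(g)\to\sum_i g(a_i)$, and the inequality $|\mathrm{tr}(A)|\le n\|A\|$ transfers the Step~1 estimate to traces. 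Finally, the equality $\sum_i g(a_i)=\sum_i g(b_i)$ for all $g\in C(X)$ does force the atomic measures to coincide, hence the multisets $[a_i]=[b_i]$ in $P^nX$, giving the contradiction. One stylistic remark: you do not actually need $S$ to be norm-closed --- it suffices that $S$ contains the dense $*$-subalgebra generated by $F$, since the Riesz-representation step only needs equality of $\sum g(a_i)$ and $\sum g(b_i)$ on a dense set of $g$. But including the closure costs nothing and the proof as written is correct.
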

This is a consequence of Lemma 2.2 and Lemma 2.3 of \cite{Su}; also see the argument 2.1.3 in \cite{Li1}. For the case of graphs, it was stated in 2.1.9 of \cite{Li1}.

\begin{lemma}\label{su} For any connected simplicial complex $X$, a finite generating set  $F\subset C(X)$, ~$\varepsilon>0$ and positive integer $n>0$, there is $\delta>0$ with the following property: If $x_{1},x_{2},\cdots,x_{n}\in X$ are $n$ points (possibly repeating), $u,v\in M_{n}(\mathbb{C})$ are two unitaries such that
$$\left\|u\left(
       \begin{array}{cccc}
         f(x_{1}) & \ & \ & \ \\
         \ & f(x_{2}) & \ & \ \\
         \ & \ & \hspace{-.1in}\ddots & \ \\
         \ & \ & \ & \hspace{-.1in}f(x_{n}) \\
       \end{array}
     \right)
u^{\ast}-v\left(
       \begin{array}{cccc}
         f(x_{1}) & \ & \ & \ \\
         \ & f(x_{2}) & \ & \ \\
         \ & \ & \hspace{-.1in}\ddots & \ \\
         \ & \ & \ & \hspace{-.1in}f(x_{n}) \\
       \end{array}
     \right)v^{\ast}\right\|<\delta,$$
for all $f\in F$, then there is a path of unitaries $u_{t}\in M_{n}(\mathbb{C})$ connecting $u$ and $v$ (i.e, $u_{0}=u, u_{1}=v$) with the property that
$$\left\|u_{t}\left(
       \begin{array}{cccc}
         f(x_{1}) & \ & \ & \ \\
         \ & f(x_{2}) & \ & \ \\
         \ & \ & \hspace{-.1in}\ddots & \ \\
         \ & \ & \ & \hspace{-.1in}f(x_{n}) \\
       \end{array}
     \right)
u_{t}^{\ast}-u_{t'}\left(
       \begin{array}{cccc}
         f(x_{1}) & \ & \ & \ \\
         \ & f(x_{2}) & \ & \ \\
         \ & \ & \hspace{-.1in}\ddots & \ \\
         \ & \ & \ & \hspace{-.1in}f(x_{n}) \\
       \end{array}
     \right)u_{t'}^{\ast}\right\|<\varepsilon$$
for all $f\in F$ and $t,t'\in [0,1]$ (of course $\delta$ depends on both $\varepsilon$ and $n$).
\end{lemma}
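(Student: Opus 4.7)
The approach is contradiction-by-compactness built on the path-connectedness of the unitary commutant of $\phi_0(C(X))$, where $\phi_0:C(X)\to M_n(\C)$ denotes the diagonal homomorphism $f\mapsto\mathrm{diag}(f(x_1),\ldots,f(x_n))$. The key structural observation is that the commutant of $\phi_0(C(X))$ inside $M_n(\C)$ splits as a direct sum $\bigoplus_j M_{m_j}(\C)$ indexed by the distinct points among $\{x_1,\ldots,x_n\}$ with multiplicities $m_j$, so its unitary group $\prod_j U(m_j)$ is path connected. Granted a continuous unitary path $w_t$ from $1$ to $w:=u^*v$ with $\mathrm{Ad}(w_t)\phi_0(f)$ approximately constant along the path, $u_t:=uw_t$ will be the desired path from $u$ to $v$.

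Assume the lemma fails: fix $\varepsilon_0>0$ and sequences $(x_1^{(k)},\ldots,x_n^{(k)})$, $u_k,v_k$ with $\|\mathrm{Ad}(u_k)\phi_0^{(k)}(f)-\mathrm{Ad}(v_k)\phi_0^{(k)}(f)\|<1/k$ for all $f\in F$, yet no unitary path from $u_k$ to $v_k$ satisfies the $\varepsilon_0$-constancy requirement. By compactness of $X^n$ and $U(n)$, after passing to a subsequence we may assume $x_i^{(k)}\to x_i^{(\infty)}$, $u_k\to u_\infty$, $v_k\to v_\infty$. Since each $f\in F$ is uniformly continuous, $\phi_0^{(k)}(f)\to\phi_0^{(\infty)}(f)$ in norm, and the hypothesis passes to the limit to give $\mathrm{Ad}(u_\infty)\phi_0^{(\infty)}(f)=\mathrm{Ad}(v_\infty)\phi_0^{(\infty)}(f)$ for every $f\in F$, hence for every $f\in C(X)$ because $F$ generates $C(X)$ as a $C^*$-algebra. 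Therefore $w_\infty:=u_\infty^*v_\infty$ lies in the unitary commutant of $\phi_0^{(\infty)}(C(X))$.

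Choose a continuous unitary path $w_\infty(t)$ in that commutant with $w_\infty(0)=1$, $w_\infty(1)=w_\infty$, and set $u_\infty(t):=u_\infty w_\infty(t)$; then $u_\infty(0)=u_\infty$, $u_\infty(1)=v_\infty$, and $\mathrm{Ad}(u_\infty(t))\phi_0^{(\infty)}(f)$ is independent of $t$ for every $f\in C(X)$. For $k$ sufficiently large, bridge to the pre-limit data by concatenating a short unitary path from $u_k$ to $u_\infty$, the middle path $u_\infty(t)$, and a short path from $v_\infty$ to $v_k$. Along the middle segment,
$$\bigl\|\mathrm{Ad}(u_\infty(t))\phi_0^{(k)}(f)-\mathrm{Ad}(u_\infty(t'))\phi_0^{(k)}(f)\bigr\|\le 2\bigl\|\phi_0^{(k)}(f)-\phi_0^{(\infty)}(f)\bigr\|,$$
which is uniformly small in $t,t'$ as $k\to\infty$; along the two bridging segments the conjugation of $\phi_0^{(k)}(f)$ changes by $O(\|u_k-u_\infty\|)$ and $O(\|v_k-v_\infty\|)$, respectively. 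For $k$ large enough, the concatenated path satisfies the $\varepsilon_0$-constancy condition, contradicting the standing assumption.

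The main subtlety is coalescence in the limit: several $x_i^{(k)}$ may merge as $k\to\infty$, so the commutant of $\phi_0^{(\infty)}(C(X))$ can be strictly larger than that of the pre-limit $\phi_0^{(k)}(C(X))$. This is actually helpful—the larger commutant is still a product of full matrix algebras, so its unitary group remains path connected, and the constancy estimates along $u_\infty(t)$ are performed against $\phi_0^{(k)}(f)$, whose norm closeness to $\phi_0^{(\infty)}(f)$ is exactly what drives the argument. The only other quantitative point to watch is that short bridging paths of length $O(\|u_k-u_\infty\|)$ always exist because $U(n)$ is a compact Lie group on which sufficiently close elements are joined by a short geodesic.
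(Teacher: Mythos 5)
Your compactness-by-contradiction argument is correct. The paper does not actually prove this lemma; it cites steps 2 and 3 of the proof of Theorem 3.1 in Su's memoir, where the argument is direct and constructive: one clusters the $x_i$ at a scale determined by the uniform continuity of $F$, observes that $u^*v$ approximately commutes with the corresponding block-diagonal structure, perturbs $u^*v$ into the exact unitary commutant (a direct sum of full matrix algebras), and then uses path-connectedness of that commutant's unitary group. Your route reaches the same structural facts (the commutant of $\phi_0^{(\infty)}(C(X))$ is a finite direct sum $\bigoplus_j M_{m_j}(\C)$, hence has connected unitary group, and this survives coalescence in the limit) but replaces Su's explicit perturbation estimates with a sequential compactness extraction on $X^n\times U(n)\times U(n)$. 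What this buys is brevity and the elimination of bookkeeping; what it costs is any quantitative handle on $\delta$ as a function of $\varepsilon$, $n$, and the modulus of continuity of $F$, which Su's approach does supply. Your bridging estimates are right; it is worth stating explicitly that the total oscillation of the concatenated path is controlled by the sum of the oscillations on the three segments together with the fact that the endpoints match, which is what makes the resulting bound drop below $\varepsilon_0$ for $k$ large and yields the contradiction.
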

This was proved in step 2 and step 3 of the proof of Theorem 3.1 of \cite{Su}. 


The following lemma reduces the study of $\phi:C(X)\rightarrow M_{l}(I_{k})$ to the study of homomorphism $\phi_{1}:C(\Gamma)\rightarrow M_{l}(I_{k})$,
where $\Gamma\subset X$ is 1-skeleton of $X$ under a certain simplicial decomposition. Since $\Gamma$ is a graph, then we will apply the technique in \cite{Li1} and \cite{Li2} to obtain the decomposition of $\phi_{1}$.

\begin{lemma}\label{5.26} Let $X$ be a 2-dimensional  simplicial complex. For any $F\!\subset\! C(X),~\varepsilon\!>\!0,~\eta\!>\!0$,
and any unital homomorphism $\phi:C(X)\rightarrow M_{l}(I_{k})$,
there is a simpicial decomposition of $X$ with 1-skeleton $X^{(1)}=\Gamma$ and  a homomorphism
$\phi_{1}:C(\Gamma)\rightarrow M_{l}(I_{k})$ such that:

1. $\|\phi(f)-\phi_{1}\circ\pi(f)\|<\varepsilon, $ where $\pi: C(X)\rightarrow C(\Gamma)$ is given by $\pi(f)=f|_{\Gamma}$; 

2. For any $t\in[0,1]$, $Sp\phi|_{t}$ and $Sp(\phi_{1}\circ\pi)_{t}$ can be paired within $\eta$.
\end{lemma}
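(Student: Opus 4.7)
The plan is to choose a sufficiently fine simplicial decomposition $K$ of $X$ and to approximately deform the spectral data of $\phi$ off the interiors of the $2$-simplices into the $1$-skeleton $\Gamma=K^{(1)}$, while controlling the perturbation in both the $\eta$-pairing metric on spectra and the $\varepsilon$-norm on $F$. First I would fix a modulus of continuity $\mu>0$ such that $\|f(x)-f(y)\|<\varepsilon/(4lk)$ for every $f\in F$ whenever $d_{X}(x,y)<\mu$, and take $K$ with mesh strictly less than $\min(\mu,\eta)/2$. With this choice, any modification of $\phi$ that moves each spectrum point by at most the mesh automatically gives both conclusions, since $\pi$ preserves $f$-values on $\Gamma$ and the pairing distance is bounded by the mesh.

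Next, using the description of $M_{l}(I_{k})$ recalled in \ref{1.18}--\ref{1.19}, I would view $\phi$ as a continuous family $\{\phi_{t}\}_{t\in[0,1]}$ of representations of $C(X)$: each $\phi_{t}$ is (up to a local unitary) diagonal with a multi-set spectrum in $X$ of cardinality $lk$ for $t\in(0,1)$, degenerating to the $k$-fold multiplicity form at $t=0,1$. The union of these multi-sets forms a one-dimensional ``trace'' $T\subset X\times[0,1]$. The core geometric step is to homotope $T$ into $\Gamma\times[0,1]$: for each open $2$-simplex $\overset{\circ}{\Delta}$ of $K$, the portion $T\cap(\overset{\circ}{\Delta}\times[0,1])$ is $1$-dimensional inside the $3$-dimensional cylinder $\Delta\times[0,1]$, so a generic interior point of $\Delta$ can be chosen disjoint from every slice $T_{t}$, and a radial push away from that point gives a continuous deformation of $T$ onto $\partial\Delta\times[0,1]$. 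This bypasses the classical obstruction to retracting a disk to its boundary, which would apply only to $2$-dimensional objects inside $\Delta$.

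To reassemble the perturbed spectral data into a genuine $\ast$-homomorphism $\phi_{1}:C(\Gamma)\rightarrow M_{l}(I_{k})$, I would use the local unitaries coming from $\phi$ together with Lemma \ref{su} (Su's unitary-path result) to interpolate unitaries as $t$ varies, so that the new spectral family fits continuously into $M_{l}(I_{k})$. The dimension-drop multiplicity structure at $t=0,1$ is preserved because the retractions act on underlying points without altering multiplicities, so the $k$-fold collapse is unaffected. Once $\phi_{1}$ is constructed, conclusion $(1)$ follows from the mesh estimate on $\mu$ (the diagonal entries $f(x_{i}(t))$ change by at most $\varepsilon/(4lk)$ each, over at most $lk$ diagonal entries), and conclusion $(2)$ from the mesh bound on the spectrum displacement.

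The main obstacle is executing the deformation of $T$ globally and coherently: the retractions chosen on adjacent $2$-simplices must agree on their common edge in $\Gamma$; the avoided interior points must be selected uniformly in $t$; and the construction must degenerate correctly as $t\to 0,1$, where $\phi_{t}$ collapses to a representation of $C(X)$ on $M_{l}(\mathbb{C})\otimes\mathbf{1}_{k}$. Lemma \ref{su} (and the preparatory Lemma \ref{5.24}) is precisely the mechanism for maintaining the $\ast$-homomorphism property under these modifications of unitaries along the parameter $[0,1]$.
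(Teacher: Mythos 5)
Your overall strategy is the paper's: pick a fine simplicial decomposition of $X$, identify for each $2$-simplex $\Delta$ an interior point missed by the spectrum, and push the spectral data to the $1$-skeleton by a retraction while controlling the perturbation both in norm and in the spectral pairing metric. But there is a genuine gap at the step where you claim that, because $T$ intersected with the open cylinder over the interior of $\Delta$ is one-dimensional inside the three-dimensional cylinder, a generic interior point of $\Delta$ must be disjoint from every slice $T_{t}$. The dimension count for $T$ is correct (the fibers of the projection $T\to[0,1]$ contain at most $kl$ points, so $\dim T\leq 1$), but what you actually need is that the projection of $T$ into $X$ misses some interior point of $\Delta$. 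A continuous image of a one-dimensional compact set can be two-dimensional --- this is the space-filling curve phenomenon --- so the conclusion simply does not follow for the original $\phi$, whose spectral branches $t\mapsto x_{j}(t)\in X$ are merely continuous.

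The paper resolves exactly this issue by inserting an intermediate perturbation: before choosing the final simplicial structure, $\phi$ is replaced by a close homomorphism $\psi$ whose spectral branches on each small parameter subinterval $[t_{i},t_{i+1}]$ are \emph{piecewise linear} maps into $X$. Concretely, one samples $\phi$ at finitely many $t_{i}$, keeps the spectral points constant and interpolates the diagonalizing unitaries (via Lemma \ref{su}) over $[t_{i},\frac{t_{i}+t_{i+1}}{2}]$, then keeps the unitary fixed and interpolates the spectral points along piecewise linear paths in the simplicial metric over $[\frac{t_{i}+t_{i+1}}{2},t_{i+1}]$; the endpoints $t=0,1$ are not moved, preserving the dimension-drop structure. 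For a piecewise linear map of an interval into a simplicial complex the image is a finite union of line segments, hence has empty interior in every $2$-simplex; only after this replacement can one choose the avoided points $x_{\Delta}$, set $Y=X\setminus\bigcup_{\Delta}B_{\sigma}(x_{\Delta})$, factor $\psi$ through $C(Y)$, and compose with the retraction $Y\to\Gamma$. Without this piecewise-linearizing step the deformation of $T$ you propose need not exist. The remaining ingredients of your outline --- mesh bounds controlling $\varepsilon$ and $\eta$ simultaneously, invoking Lemma \ref{5.24} to deduce the pairing estimate from the norm estimate, degeneration at the endpoints where the spectrum collapses into $M_{l}(\mathbb{C})\otimes\textbf{1}_{k}$ --- line up with the paper once the intermediate $\psi$ is in place, so you should make that replacement explicit.
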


\begin{proof} By Lemma \ref{5.24}, we only need to prove that there exists a homomorphism $\phi_{1}$ to satisfy condition (1). Without loss of generality, we  assume that $\mathrm{F}$ generates $C(X)$. By \ref{su}, there is an $\varepsilon'>0$ such that for any $x_{1},x_{2},\cdots,x_{kl}\in X$ and unitaries $u,v\in M_{kl}(\mathbb{C}),$ if
$$\left \|u\left(
       \begin{array}{cccc}
         f(x_{1}) & \ & \ & \ \\
         \ & f(x_{2}) & \ & \ \\
         \ & \ & \hspace{-.1in}\ddots & \ \\
         \ & \ & \ & f(x_{kl}) \\
       \end{array}
     \right)
u^{\ast}-v\left(
       \begin{array}{cccc}
         f(x_{1}) & \ & \ & \ \\
         \ & f(x_{2}) & \ & \ \\
         \ & \ & \hspace{-.1in}\ddots & \ \\
         \ & \ & \ & f(x_{kl}) \\
       \end{array}
     \right)v^{\ast} \right \|<\varepsilon',$$
then there is a continuous path $u_{t}$ with $u_{0}=u, u_{1}=v$ satisfying that
$$\left \|u_{t}\left(
       \begin{array}{cccc}
         f(x_{1}) & \ & \ & \ \\
         \ & f(x_{2}) & \ & \ \\
         \ & \ & \hspace{-.1in}\ddots & \ \\
         \ & \ & \ & \hspace{-.1in}f(x_{kl}) \\
       \end{array}
     \right)
u_{t}^{\ast}-u_{t'}\left(
       \begin{array}{cccc}
         f(x_{1}) & \ & \ & \ \\
         \ & f(x_{2}) & \ & \ \\
         \ & \ & \hspace{-.1in}\ddots & \ \\
         \ & \ & \ & \hspace{-.1in}f(x_{kl}) \\
       \end{array}
     \right)u_{t'}^{\ast} \right \|<\frac{\varepsilon}{3}.$$

Recall for the simplicial complex, a continuous path $\{x(t)\}_{0\leq t\leq1}$ is called piecewise linear if there are a sequence of points $$0=t_{0}<t_{1}<\cdots<t_{n}=1$$
such that $\{x(t)\}_{t_{i}\leq t\leq t_{i+1}}$ fall in the same simplex of $X$ and are linear there. Note that the property of piecewise linear is preserved under any subdivision of the simplicial complex. For the simplicial complex $X$, we endow the standard metric on $X$, briefly described as below (see  \cite[1.4.1]{G5} for detail). Identify each n-simplex with an n-simplex in $\mathbb{R}^{n}$ whose edges are of length 1, preserving affine structure of the simplexes. Such identifications give rise to a unique metric on the simplex $\Delta$. For any two points $x,y\in X$, $d(x,y)$ is defined to be the length of the shortest path connecting $x$ and $y$. (The length is measured in individual simplex, by breaking the path into small pieces). With this metric, if $x_{0},x_{1}\in X$ with $d(x_{0},x_{1})=d$, then there is a piecewise linear path $x(t)$ with length $d$ such that $x(0)=x_{0},~x(1)=x_{1}$. Furthermore, $d(x(t),x(t)')\leq d$ for all $t,t'\in[0,1]$. In fact, we can choose $x(t)$, such that $$d(x(t),x(t'))=|t'-t|\cdot d.$$
There is an $\eta'<\frac{\eta}{4}$ such that the following is true: For any $x,x'\in X$ with $d(x,x')<2\eta'$, $$|f(x)-f(x)'|<\frac{\varepsilon'}{3}.$$

Let $\delta>0$, such that if $|t-t'|\leq\delta$, then $$\|\phi(f)(t)-\phi(f)(t')\|<\frac{\varepsilon'}{3},~ \forall f\in F,$$ and $Sp\phi_{t}$ and  $Sp\phi_{t'}$ can be paired  within $\eta'$.

Dividing the interval $[0,1]$ into pieces $0=t_{0}<t_{1}<t_{2}<\cdots<t_{\bullet}=1$, with $|t_{i+1}-t_{i}|<\delta$.
We first define $\psi: C(X)\rightarrow M_{l}(I_{k})$ such that $\psi$ is close to $\phi$ on $F$ to within $\frac{\varepsilon}{3}$, $Sp\phi_{t}$ and $Sp\psi_{t}$ can be paired within $\eta'$, and with extra property that on each interval $[t_{i},t_{i+1}]$; $Sp\psi_{t}=\{\alpha_{1}(t),\alpha_{2}(t),\cdots,\alpha_{lk}(t)\}$ with all $\alpha_{j}:[t_{i},t_{i+1}]\rightarrow X$ being piecewise linear. 

Set $\psi|_{\{t_{i}\}}=\phi|_{\{t_{i}\}}$, for each $t_{i}$ ($i=0,1,2,\cdots,\bullet$)--- that is,
$$\psi(f)(t_{i})=\phi(f)(t_{i})~~~~~ \mbox{ for}~ i=0,1,2,\cdots,\bullet.$$
And we will define $\psi|_{\{t\}}$ for $t\in(t_{i},t_{i+1})$ by interpolating  the definitions between $\psi|_{\{t_{i}\}}$ and $\psi|_{\{t_{i}+1\}}$. (Note that we do not change the definitions of $\phi|_{\{0\}}$ and $\phi|_{\{1\}}$, hence $\psi$ is a homomorphism into $M_{l}(I_{k})$ instead of $M_{lk}(C[0,1])$.)

Let $$Sp\psi|_{\{t_{i}\}}=\{\alpha_{1},\alpha_{2},\cdots,\alpha_{lk}\}\subset X$$
$$Sp\psi|_{\{t_{i+1}\}}=\{\beta_{1},\beta_{2},\cdots,\beta_{lk}\}\subset X.$$

Since $Sp\psi|_{\{t_{i}\}}$ and $Sp\psi|_{\{t_{i+1}\}}$ can be paired within $\eta'$, we can assume $dist(\alpha_{i},\beta_{i})<\eta'$. There exist two unitaries $u,v\in M_{lk}(\mathbb{C})$ such that
$$\psi(f)(t_{i})=u\left(
                    \begin{array}{ccc}
                      \hspace{-.05in}f(\alpha_{1}) & \ & \ \\
                      \ & \hspace{-.1in}\ddots & \ \\
                      \ & \ & \hspace{-.1in}f(\alpha_{kl}) \\
                    \end{array}
                  \right)
u^{\ast}~~\mbox{and}~~~
\psi(f)(t_{i+1})=v\left(
                    \begin{array}{ccc}
                      \hspace{-.05in}f(\beta_{1}) & \ & \ \\
                      \ & \hspace{-.1in}\ddots & \ \\
                      \ & \ & \hspace{-.1in}f(\beta_{kl}) \\
                    \end{array}
                  \right)
v^{\ast}.$$
Note that $\|f(\alpha_{j})-f(\beta_{j})\|<\frac{\varepsilon'}{3}$ for each $j$, we have
$$\left \|v\left(
                    \begin{array}{ccc}
                      f(\alpha_{1}) & \ & \ \\
                      \ & \ddots & \ \\
                      \ & \ & f(\alpha_{kl}) \\
                    \end{array}
                  \right)
v^{\ast}-v\left(
                    \begin{array}{ccc}
                      f(\beta_{1}) & \ & \ \\
                      \ & \ddots & \ \\
                      \ & \ & f(\beta_{kl}) \\
                    \end{array}
                  \right)
v^{\ast}\right \|<\frac{\varepsilon'}{3}.$$
Combining with $\|\psi(f)(t_{i})-\psi(f)(t_{i+1})\|<\frac{\varepsilon'}{3}$, we get
$$\left \|u\left(
                    \begin{array}{ccc}
                      f(\alpha_{1}) & \ & \ \\
                      \ & \ddots & \ \\
                      \ & \ & f(\alpha_{kl}) \\
                    \end{array}
                  \right)
u^{\ast}-v\left(
                    \begin{array}{ccc}
                      f(\alpha_{1}) & \ & \ \\
                      \ & \ddots & \ \\
                      \ & \ & f(\alpha_{kl}) \\
                    \end{array}
                  \right)
v^{\ast}\right \|<\frac{2\varepsilon'}{3}.$$
Since $\varepsilon'$ is the number $\delta$ in Lemma \ref{su} for $\frac{\varepsilon}{3}$, applying Lemma \ref{su}, there is a unitary path $u(t)$, ${t_{i}\leq t\leq\frac{t_{i}+t_{i+1}}{2}}$~
with~~ $u(t_{i})=u,~~~
u(\frac{t_{i}+t_{i+1}}{2})=v$~ such that
$$\left \|u(t)\left(
                    \begin{array}{ccc}
                      f(\alpha_{1}) & \ & \ \\
                      \ & \hspace{-.1in}\ddots & \ \\
                      \ & \ & f(\alpha_{kl}) \\
                    \end{array}
                  \right)
u^{\ast}(t)-u(t')\left(
                    \begin{array}{ccc}
                      f(\alpha_{1}) & \ & \ \\
                      \ & \hspace{-.1in}\ddots & \ \\
                      \ & \ & f(\alpha_{kl}) \\
                    \end{array}
                  \right)
u^{\ast}(t')\right \|<\frac{\varepsilon}{3}$$
for all $t,t'\in[t_{i},\frac{t_{i}+t_{i+1}}{2}]$.

There are piecewise linear paths $r_{i}(t)$ with $r_{i}(\frac{t_{i}+t_{i+1}}{2})=\alpha_{i}$ and $ r_{i}(t_{i+1})=\beta_{i}$ such that $$d(r_{i}(t),r_{i}(t'))\leq dist(\alpha_{i},\beta_{i})<\eta'.$$ Define $\psi(f)$ as follows: 
For $t\in[t_{i},\frac{t_{i}+t_{i+1}}{2}]$,
$$\psi(f)(t)=u(t)\left(
                    \begin{array}{ccc}
                      f(\alpha_{1}) & \ & \ \\
                      \ & \ddots & \ \\
                      \ & \ & f(\alpha_{kl}) \\
                    \end{array}
                  \right)
u^{\ast}(t);$$
for $t\in[\frac{t_{i}+t_{i+1}}{2},t_{i+1}]$,
$$\psi(f)(t)=v\left(
                \begin{array}{cccc}
                  f(r_{1}(t)) & \ & \ & \  \\
                  \ & f(r_{2}(t)) & \ & \ \\
                  \ & \ & \ddots & \  \\
                  \ & \ & \ & f(r_{kl}(t)) \\
                \end{array}
              \right)
v^{\ast}.$$
Then $\{Sp\psi_{t}, t\in[t_{i},t_{i+1}]\}$ is a collection of $kl$ piecewise linear maps from $[t_{i},t_{i+1}]$ to $X$. (Note that for $t\in[t_{i},\frac{t_{i}+t_{i+1}}{2}]$, we use constant maps which are linear.)

Now  subdivid the simplicial complex $X$ so that  each simplex of the subdivision has diameter at most $\eta'$,  and so that all the points in
$Sp\phi|_{\{0\}}=Sp\psi|_{\{0\}}$ and $Sp\phi|_{\{1\}}=Sp\psi|_{\{1\}}$ are vertices. With this simplicial decomposition  we have, 
$Sp\psi\cap\Delta\subsetneqq\Delta$, for every $2$-simplex $\Delta$. This is true because $Sp\psi|_{[t_{i},t_{i+1}]}$ is the union of the collection of images of $kl$ piecewise linear maps from $[t_{i},t_{i+1}]$ to $X$, and a finite union of line segments must be  1-dimensional. Hence for each simplex $\Delta$
of dimension 2, we can choose a point $x_{\Delta}\in\stackrel{\circ}{\Delta}$, such that $x_{\Delta}\not\in Sp \psi$.

There is a $\sigma>0$ such that $Sp\psi$ has no intersection with
 $\overline{B_{\sigma}(x_{\Delta})}=\{x\in X,dist(x,x_{\Delta})\leq\sigma\}$
for all $\Delta$. 
Let $Y=X\backslash\big(\cup\{B_{\sigma}(x_{\Delta})~| \Delta\mbox{ is 2-simplex}\}\big)$. Then $Sp\psi\subset Y$. That is, $\psi$ factors through $C(Y)$ as $$\psi:~~C(X)\xrightarrow{restriction}C(Y)\xrightarrow{\psi_{1}}M_{l}(I_{k}).$$
Let $\alpha:Y\rightarrow X^{(1)}$ be the standard retraction defined as a map sending $\Delta\backslash\{x_{\Delta}\}$ to $\partial\Delta$ for each simplex $\Delta$. Then $d(x,\alpha(x))<\eta'$.
Let $\phi_{1}:C(X^{(1)})\rightarrow M_{l}(I_{k})$ be defined by $$\psi_{1}\circ\alpha^{\ast}:C(X^{(1)})\xrightarrow{\alpha^{\ast}}C(Y)\xrightarrow{\psi_{1}}M_{l}(I_{k}).$$
Evidently $\phi_{1}$ is as desired.  
\end{proof}

\begin{corollary}\label{5.27} Suppose that $\phi:C(X)\rightarrow M_{l}(I_{k})$ is a unital homomorphism. For any finite set $F\subset C(X)$, $\varepsilon>0$, and $\eta>0$, there is a unital homomorphism $$\psi:C(X)\rightarrow M_{l}(I_{k})$$
such that \\
(1) $\phi(f)(0)=\psi(f)(0), \phi(f)(1)=\psi(f)(1)$ for all $f\in C(X)$;\\
(2) $\|\phi(f)-\psi(f)\|<\varepsilon$ for all $f\in F$;\\
(3) $Sp\phi_{t}$ and $Sp\psi_{t}$ can be paired to within $\eta$;\\
(4) For each $t\in(0,1)$, the maximal multiplicity of $Sp\psi_{t}$ is one --- that is, $\psi|_{\{t\}}$ has distinct spectra.
\end{corollary}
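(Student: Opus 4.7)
The plan is two-stage: first reduce to a homomorphism factoring through the $1$-skeleton of a suitable subdivision of $X$ using Lemma \ref{5.26}, and then perturb the resulting spectral data along the graph to separate coincident eigenvalues at every interior point of $[0,1]$. For the reduction, I would apply Lemma \ref{5.26} with $\varepsilon/2$ and a suitably small $\eta'<\eta/2$ in place of its parameters, obtaining a simplicial subdivision with $1$-skeleton $\Gamma$ and a unital homomorphism $\phi_1:C(\Gamma)\to M_l(I_k)$ whose composition $\phi':=\phi_1\circ\pi$ satisfies $\|\phi(f)-\phi'(f)\|<\varepsilon/2$ on $F$ and has $Sp\phi_t$, $Sp\phi'_t$ paired within $\eta'$. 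Inspection of the proof of Lemma \ref{5.26} yields the additional property $\phi'(f)(0)=\phi(f)(0)$ and $\phi'(f)(1)=\phi(f)(1)$: the intermediate homomorphism constructed there equals $\phi$ at the partition endpoints $t_0=0$ and $t_\bullet=1$, and the subdivision is taken so that the points of $Sp\phi_0\cup Sp\phi_1$ are vertices of $\Gamma$, hence fixed by the retraction $\alpha:Y\to X^{(1)}$ used to define $\phi_1$.

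For the perturbation, write $\phi'$ on each subinterval $[t_i,t_{i+1}]$ of the partition from the proof of \ref{5.26} in the form
\begin{equation*}
\phi'(f)(t)=u(t)\,\mathrm{diag}\bigl(f(\alpha_1(t)),\ldots,f(\alpha_{kl}(t))\bigr)\,u(t)^*,
\end{equation*}
with continuous unitary $u(t)$ and piecewise linear paths $\alpha_j:[t_i,t_{i+1}]\to\Gamma$. I would replace the $\alpha_j$ by piecewise linear $\beta_j$ satisfying: (i) $\beta_j(0)=\alpha_j(0)$ and $\beta_j(1)=\alpha_j(1)$; (ii) at each interior partition point $t_i$, the multiset $\{\beta_j(t_i)\}_j$ equals $\{\alpha_j(t_i)\}_j$ (so that the global $\psi$ is continuous via the same unitary patching as in \ref{5.26}); (iii) $d(\alpha_j(t),\beta_j(t))<\eta'$ everywhere; and (iv) the $\beta_j(t)$ are pairwise distinct for every $t\in(0,1)$. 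Because $\Gamma$ is one-dimensional, each $\alpha_j(t)$ sits in an edge of $\Gamma$ along which it can be translated by a tiny amount. The perturbation is carried out edge-by-edge by adding to each $\alpha_j$ a piecewise linear ``bump'' along the ambient edge, vanishing at the partition points and taking mutually distinct, generically chosen amplitudes for different $j$. A standard general-position argument---using that the pairwise coincidence locus in $\Gamma^{kl}$ has codimension one while the parameter space is one-dimensional---shows that for generic bumps the perturbed paths are pointwise distinct on each open subinterval, and a further small adjustment of the partition or of the bumps handles distinctness at the finitely many interior partition points $t_i$.

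Set $\psi(f)(t):=u(t)\,\mathrm{diag}\bigl(f(\beta_1(t)),\ldots,f(\beta_{kl}(t))\bigr)\,u(t)^*$ on each subinterval. Condition (ii) together with the unitary-path construction from \ref{5.26} ensures that $\psi$ is continuous across partition points, while $\psi(f)(0)=\phi(f)(0)\in M_l(\C)\otimes\textbf{1}_k$ (and similarly at $t=1$) places $\psi$ in $M_l(I_k)$. Conditions (1) and (4) of the corollary hold by construction; (2) follows from $\|\phi-\phi'\|+\|\phi'-\psi\|<\varepsilon/2+\varepsilon/2$ after choosing $\eta'$ small enough that $d(x,y)<\eta'$ implies $|f(x)-f(y)|<\varepsilon/2$ for every $f\in F$; and (3) follows from the triangle inequality for paired pairings together with $\eta'<\eta/2$. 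The main obstacle is the perturbation step: producing a simultaneous perturbation of $kl$ spectrum paths in a graph that is distinct at every interior point while respecting the pre-existing coincidence pattern at the partition points (in particular at $t=0,1$, where multiplicities must remain divisible by $k$). The one-dimensional flexibility of $\Gamma$, combined with the freedom to use small bumps of distinct amplitudes, is what makes the construction feasible.
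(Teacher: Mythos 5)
Your overall strategy mirrors the paper's: both proofs first invoke Lemma \ref{5.26} to push the problem down to a homomorphism factoring through the 1-skeleton $\Gamma$, verify that the reduction preserves the values at $t=0$ and $t=1$, and then perturb the $kl$ spectral paths in $\Gamma$ to achieve distinctness on $(0,1)$. The paper does the perturbation step by citing Li's Theorem~2.1.6 together with a modified version of \cite[Lemma 2.1.1]{Li1} (the modification being that only one of the two endpoint multisets is required to be distinct), so the machinery for interpolating between prescribed spectra with distinctness on a half-open interval is already packaged there.

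Where your argument has a genuine problem is at the interior partition points $t_i$. Your conditions (ii) and (iv) contradict each other as written: (ii) requires the multiset $\{\beta_j(t_i)\}_j$ to equal the (possibly degenerate) multiset $\{\alpha_j(t_i)\}_j$, while (iv) requires the $\beta_j(t_i)$ to be pairwise distinct for $t_i\in(0,1)$. The parenthetical remark that ``a further small adjustment of the partition or of the bumps handles distinctness at the finitely many interior partition points'' does not resolve this, because the bumps vanish exactly at the partition points by design, and moving the partition points merely relocates the conflict. The correct repair is structural and is precisely what the paper does: \emph{first} perturb the discrete spectra $Sp\phi'|_{y^i}$ at the interior partition points to distinct spectra (keeping $y^0=0$ and $y^m=1$ untouched, so that the boundary values remain in $M_l(\C)\otimes\textbf{1}_k$), and \emph{then} interpolate between consecutive, now-distinct, spectra on each $[y^i,y^{i+1}]$ using the (modified) Li lemma. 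Your general-position argument can serve in place of Li's Lemma 2.1.1 on a single subinterval, but only after the endpoint data has been separated; as a one-pass construction that fixes the multiset at every partition point, it cannot succeed. Everything else---the reduction via Lemma \ref{5.26}, the boundary-preservation claim for $\phi_1\circ\pi$, the use of the triangle inequality for (2) and (3), and the observation that $\psi(f)(0),\psi(f)(1)\in M_l(\C)\otimes\textbf{1}_k$ places $\psi$ into $M_l(I_k)$---is sound and matches the paper.
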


\begin{proof} Applying Lemma \ref{5.26}, we reduce the case of $C(X)$ to the case of $C(X^{(1)})$, where $X^{(1)}$ is a 1-dimensional simplicial complex. 
The corollary of this case is almost the same as the special case  of \cite[Theorem 2.1.6]{Li1} (where we let $Y=[0,1]$).
Note that from the proof of Theorem 2.1.6 in \cite{Li1}, if we do not require the homomorphism $\psi$ to have distinct spectrum at the end points 0 and 1, then we do not need to modify the original homomorphism $\phi$ at these two end points. The proof goes the same way as the proof there with some small modifications. 
We briefly describe them as below. One divides the interval $Y=[0,1]$ into small pieces $[0,1]=\cup_{i=0}^{m-1}[y^i,y^{i+1}]$ with $y^0=0<y^1<y^2\cdots<y^m=1$, as in the proof of \cite[Theorem 2.1.6]{Li1}. Define $\psi|_{y^i}$ with $1\leq i\leq m-1$, by slightly modifying  $\phi|_{y^i}$ so that $\psi|_{y^i}$ has distinct spectra; but define $\psi|_0=\phi|_0$ and $\psi|_1=\phi|_1$ (no modification are made at the ending points).  Therefore, in our case, $\psi|_0$ and $\psi|_1$ do not have distinct spectra---this is the only difference from \cite[Theorem 2.1.6]{Li1}.  For all intervals $[y^i,y^{i+1}]$ with $1\leq i\leq m-2$, the constructions of  $\psi|_{[y^i,y^{i+1}]}$ are the same as in the proof of \cite[Theorem 2.1.6]{Li1}. For the constructions of  $\psi|_{[0,y^1]}$ and $\psi|_{[y^{m-1}, 1]}$, we need to modify \cite[Lemma 2.1.1]{Li1} and \cite[Lemma 2.1.2]{Li2} accordingly, in an obvious way, and then apply these modifications. For example,  \cite[Lemma 2.1.1]{Li1} should be modified to the following case: among two $l$-element sets $X^{0}=\{x_{1}^{0},x_{2}^{0},\cdots,x_{l}^{0}\}$ and
$X^{1}=\{x_{1}^{1},x_{2}^{1},\cdots,x_{l}^{1}\}$ --- only one of them is distinct.  That is, the following statement is true with the same proof:

Let $X=X_{1}\vee X_{2}\vee\cdots\vee X_{k}$ be a bunch of $k$ intervals $X_{i}=[0,1]~(1\leq i\leq k)$ and $Y=[0,1]$.  Suppose that $$X^{0}=\{x_{1}^{0},x_{2}^{0},\cdots,x_{l}^{0}\}\subset X ~~~\mbox{ and}~~~~
X^{1}=\{x_{1}^{1},x_{2}^{1},\cdots,x_{l}^{1}\}\subset X$$ with $x_{i}^{1}\neq x_{j}^{1}$ if $i\neq j$.
Then there are $l$ continuous functions $f_{1},f_{2},\cdots,f_{l}:Y\rightarrow X$ such that \\
(1) as sets with multiplicity, we have
$$\{f_{1}(0),f_{2}(0),\cdots,f_{l}(0)\}=X^{0}, ~~~\mbox{and} ~~~\{f_{1}(1),f_{2}(1),\cdots,f_{l}(1)\}=X^{1},$$
(2) for each $t\in(0,1]\subset Y$ and $i\neq j$, we have $$f_{i}(t)\neq f_{j}(t).$$
\end{proof}

\begin{remark}\label{5.28} In Corollary \ref{5.27}, we can further assume that $Sp\psi|_{\{0\}}$ and $Sp\psi|_{\{1\}}$ have eigenvalue multiplicity  just $k$ as homomorphisms from $C(X)$ to $M_{lk}(C[0,1])$, or equivalently,  both maps
$$
C(X)\xrightarrow{\psi}M_{l}(I_{k})\xrightarrow{evaluate\ at\ 0}M_{l}(\mathbb{C})
~~~\mbox{and}~~~
C(X)\xrightarrow{\psi}M_{l}(I_{k})\xrightarrow{evaluate\ at\ 1}M_{l}(\mathbb{C})
$$
have distinct spectrum. To do this, we first extend the definition of the original $\phi$ to a slightly larger interval $[-\delta,1+\delta]$ as below.

Find $u\in M_{l}(\mathbb{C})$ and $x_{1},x_{2},\cdots,x_{l}\in X$ such that
$$\phi(f)(0)=u\left(
                \begin{array}{cccc}
                  f(x_{1}) & \ & \ & \  \\
                  \ & f(x_{2}) & \ & \ \\
                  \ & \ & \ddots & \  \\
                  \ & \ & \ & f(x_{l}) \\
                \end{array}
              \right)u^{\ast}\otimes \textbf{1}_{k}.$$
Since $X$ is path connected and $X\neq\{pt\}$, there are functions $\alpha_{i}:[-\delta,0]\rightarrow X$ such that $\{\alpha_{i}(-\delta)\}_{i=1}^{l}$
is a set of distinct $l$ points, $\alpha_{i}(0)=x_{i}$, and $dist(\alpha_{i}(t),\alpha_{i}(0))$ are as  small as we want.  Define
\begin{center}
$\phi(f)(t)=u\left(
                \begin{array}{ccc}
                  f(\alpha_{1}(t)) & \ & \   \\
                  \ & \ddots & \ \\
                  \ & \ & f(\alpha_{l}(t))  \\
                \end{array}
              \right)u^{\ast}\otimes\textbf{1}_{k}, ~~~~~\mbox{for } t\in[-\delta,0].$
\end{center}
Similarly, we can define $\phi(f)(t)$ for $t\in[1,1+\delta]$, so that $\phi|_{1+\delta}$ as a homomorphism from $C(X)$ to $M_{kl}(\mathbb{C})$ has multiplicity exactly $k$ and $\phi(f)(1+\delta)\in M_{l}(\mathbb{C})\otimes\textbf{1}_{k}$. One can reparemetrize $[-\delta,1+\delta]$ to $[0,1]$ so that $\phi|_{0}$ and $\phi|_{1}$ as homomorphisms from $C(X)$ to $M_{kl}(\mathbb{C})$
have  multiplicity exactly $k$. Then we apply the corollary to perturb $\phi$ to $\psi$ without changing the definition at the end points.
\end{remark}

\begin{remark}\label{5.29} The same argument can be used to prove the following result. Let $X\neq\{pt\}$ be a connected finite simplicial complex of any
dimension. Let $Y$ be a 1-dimensional simplicial complex.  Then any homomorphism $\phi:C(X)\rightarrow M_{n}(C(Y))$ can be approximated arbitrarily well by a homomorphism $\psi$ with distinct spectrum. This is a strengthened form of \cite[Theorem 2.1]{G5} for the case $dim(Y)=1$.

The following Theorem for $X=\noindent\textbf{gragh}$, is a slight modification of \cite[Theorem 2.7]{Li3}. 
\end{remark}

\begin{theorem}\label{5.30} Let $X$ be a connected simplicial complex of dimension at most 2, and $G\subset C(X)$ be a finite set which generates $C(X)$. For any $\varepsilon>0$, there is an $\eta>0$ such that the following statement  is true.

Suppose that $\phi:C(X)\rightarrow M_{l_{1}l_{2}+r}(I_{k})$ is a unital homomorphism satisfying the following condition:
There are $l_{1}$ continuous maps $$a_{1},a_{2},\cdots,a_{l_{1}}:[0,1](=Sp(I_{k}))\rightarrow X$$
such that for every $y\in[0,1]$, $Sp\phi_{y}$ (considered as a homomorphism from $C(X)$ to $ M_{(l_{1}l_{2}+r)k}(C[0,1])$) and $\Theta(y)$ can be paired within $\eta$, where $$\Theta(y)=\{a_{1}(y)^{\sim l_{2}k},a_{2}(y)^{\sim l_{2}k},\cdots,a_{l_{1}-1}(y)^{\sim l_{2}k},a_{l_{1}}(y)^{\sim (l_{2}+r)k}\}.$$
It follows that there are $l_{1}$ mutually orthogonal projections $p_{1},p_{2},$ $\cdots,$ $p_{l_{1}}\in$ $M_{l_{1}l_{2}+r}(I_{k})$ such that\\
(i) for all $g\in G$ and $y\in Y$
$$\|\phi(g)(y)-p_{0}\phi(g)(y)p_{0}\oplus\sum_{k=1}^{l_{1}}g(a_{k}(y))p_{k}\|<\varepsilon,$$ where 
 $p_{0}=1-\sum_{i=1}^{l_{1}}p_{i}$;\\
(ii) $rank(p_{i})=(l_{2}-3)k$ for $1\leq i<l_1$,
$rank(p_{l_{1}})=(l_{2}+r-3)k$ (as projections in $M_{(l_{1}l_{2}+r)k}(C[0,1]))$ and  $rank(p_{0})=3l_{1}k$.
\end{theorem}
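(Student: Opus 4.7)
The plan is to follow a standard reduction strategy that passes from the 2-dimensional complex $X$ to its 1-skeleton (a graph), where the theorem is already essentially known. First, I would apply Lemma \ref{5.26} to produce a simplicial subdivision of $X$ with 1-skeleton $\Gamma$ and a homomorphism $\phi_1: C(\Gamma) \to M_{l_1 l_2 + r}(I_k)$ satisfying $\|\phi(g) - \phi_1\circ\pi(g)\| < \varepsilon/10$ on $G$ and such that $Sp\phi_y$ and $Sp(\phi_1\circ\pi)_y$ can be paired within any prescribed $\eta' \ll \eta$. Composing the continuous maps $a_i:[0,1]\to X$ with the standard retraction $\alpha: Y \to X^{(1)}$ from the proof of Lemma \ref{5.26} gives continuous maps $\tilde a_i:[0,1]\to \Gamma$ with $d(\tilde a_i(y), a_i(y)) < \eta'$, so the spectrum of $\phi_1$ is paired with $\tilde\Theta(y) := \{\tilde a_1(y)^{\sim l_2 k},\dots,\tilde a_{l_1}(y)^{\sim (l_2+r)k}\}$ within $\eta + \eta'$.

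Next, I would apply Corollary \ref{5.27} combined with Remark \ref{5.28} to approximate $\phi_1$ by a homomorphism $\psi_1: C(\Gamma) \to M_{l_1l_2+r}(I_k)$ whose spectrum at every interior $t\in(0,1)$ is distinct and whose endpoint maps (viewed into $M_{(l_1l_2+r)k}(\C)$) have multiplicity exactly $k$ on a distinct set of spectral points. This places $\psi_1$ into the class of homomorphisms with distinct spectra along $\Gamma$-valued piecewise linear paths, while the pairing with $\tilde\Theta(y)$ is preserved up to a controlled error by Lemma \ref{5.24}.

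The problem has now been reduced to a homomorphism $\psi_1: C(\Gamma)\to M_{l_1l_2+r}(I_k)$ from a finite graph $\Gamma$, whose spectrum is paired with $\tilde\Theta(y)$ within a small $\eta$. The graph version of this decomposition statement is \cite[Theorem 2.7]{Li3}; its proof constructs the required mutually orthogonal projections $p_1,\dots,p_{l_1}$ of ranks $(l_2-3)k$ (for $i<l_1$) and $(l_2+r-3)k$ (for $i=l_1$), together with the complementary projection $p_0$ of rank $3l_1 k$, such that $\phi(g)$ is approximated by $p_0\phi(g)p_0 \oplus \sum g(a_i(y))p_i$ on $G$ within $\varepsilon/2$. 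The slack "three copies" $3l_1 k$ in $p_0$ is precisely what allows the piecewise-linear approximations produced above to be absorbed. Once the graph-level decomposition is in hand, composing with the inclusion $\pi:C(X)\to C(\Gamma)$ and the approximation $\|\phi - \phi_1\circ\pi\| < \varepsilon/10$ yields the required decomposition for the original $\phi$.

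The main obstacle will be the bookkeeping needed to guarantee that the output really lands in $M_{l_1l_2+r}(I_k)$ rather than merely in $M_{(l_1l_2+r)k}(C[0,1])$: every projection $p_i$ constructed must satisfy $p_i(0), p_i(1)\in M_{l_1l_2+r}(\C)\otimes \mathbf{1}_k$, and every spectral multiplicity that remains at $0$ or $1$ must be a multiple of $k$. This is exactly why the hypothesis packages the multiplicities of $\tilde\Theta(y)$ as $l_2k$ and $(l_2+r)k$; as long as this $k$-divisibility is tracked carefully through each reduction step (Lemma \ref{5.26}, Corollary \ref{5.27}, and the graph decomposition), the endpoint constraints of $I_k$ are automatically preserved, and the ``three'' in the rank formulas reflects the three pieces of spectrum one has to sacrifice per map $a_i$ in order to open up enough room for the unitary and piecewise-linear homotopies used in Lemmas \ref{5.15} and \ref{su}.
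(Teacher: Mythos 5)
Your route differs from the paper's. You first reduce $X$ to a $1$-skeleton $\Gamma$ via Lemma~\ref{5.26} and then apply \cite[Theorem 2.7]{Li3} for graphs. The paper instead keeps the domain $C(X)$ and observes (via Remark~\ref{5.29}) that the only place Li's proof needs $X$ to be a graph is the distinct-spectrum approximation; since that holds for arbitrary connected $X$ once $\dim Y=1$, one can run Li's argument directly on the given $\phi$. Your extra retraction step is not wrong, but it is unnecessary and adds further $\varepsilon$-bookkeeping; both routes still rely on Corollary~\ref{5.27} and Remark~\ref{5.28} to normalize the endpoint multiplicities before invoking Li.

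The genuine gap is in your last paragraph. You correctly identify that the essential difficulty is forcing $p_i(0),p_i(1)\in M_{l_1l_2+r}(\mathbb{C})\otimes\textbf{1}_k$, but then assert this is ``automatically preserved'' once $k$-divisibility is tracked. It is not automatic, and the paper's proof is devoted almost entirely to this point. Li's construction chooses $p_j(y)$ as an \emph{arbitrary} rank-$(l_2-3)$ sub-projection of the spectral projection $P^j_U(y)$ over each member $U$ of an open cover of $[0,1]$; merely multiplying the target rank by $k$ does not force $p_j(0)$ or $p_j(1)$ to have the tensor form $p'_j\otimes\textbf{1}_k$. The paper fixes this by first arranging that $Sp\phi|_0$ and $Sp\phi|_1$ have multiplicity exactly $k$ (so that the endpoint spectral projections $P^j_{U_0}(0)$ do lie in $M_{l_1l_2+r}(\mathbb{C})\otimes\textbf{1}_k$), then \emph{choosing} $p_j(0)=p'_j(0)\otimes\textbf{1}_k< P^j_{U_0}(0)$ with $\mathrm{rank}\,p'_j(0)=l_2-3$ (similarly at $1$), and finally \emph{connecting} $p_j(0)$ to the already-constructed $p_j(b_0)$ by a projection-valued path inside $P^j_{U_0}(\cdot)$ over $[0,b_0]$. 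Without this explicit choice-and-connect step your construction produces projections in $M_{(l_1l_2+r)k}(C[0,1])$ but not necessarily in $M_{l_1l_2+r}(I_k)$, which is precisely what the theorem requires. (Also note that \cite[Theorem 2.7]{Li3} as stated gives ranks $l_2-3$ and $l_2+r-3$, not their $k$-multiples; the factor $k$ is part of the modification you would still need to carry out, not something the cited theorem already supplies.)
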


\begin{proof} We will apply \cite[Theorem 2.7]{Li3} (using map $a_{i}$ to replace map $b\circ a_{i}$ as in \cite[Remark 2.8]{Li3}) and its proof (see 2.9-2.16 of \cite{Li3}) for the case $Y$ in \cite[Theorem 2.7]{Li3} being [0,1]. As a matter of fact, in the proof of \cite[Theorem 2.7]{Li3}, Li does use that $X$ to be graph, for only one property that any homomorphism from $C(X)$ to $M_{n}C(Y)$ ($Y$ graph) can be approximated arbitrarily well by homomorphisms with distinct spectra. 
By Remark \ref{5.29},  \cite[Theorem 2.7]{Li3}
holds for the case $X\neq\{pt\}$ being any connected simplicial complex and $Y$, a graph.

For finite set $G\subset C(X)$,  and $\varepsilon>0$, choose $\eta>0$ such that $dist(x_{1},x_{2})\leq\eta$
implies $|g(x_{1})-g(x_{2})|<\frac{\varepsilon}{4}$ for all  $g\in G$, as in \cite[2.16]{Li3}.  Without lose of generality, we can assume that the $Sp\phi|_t$ is distinct for any $t\in (0,1)$ and $Sp\phi|_0$ and $Sp\phi|_1$ have  multiplicities exact $k$ as in Corollary \ref{5.27} and Remark \ref{5.28} above. When we go through Li's proof in \cite{Li3}, we need 
to make  the projections $p_{i}$ to satisfy the extra condition:
$$p_{i}(0),p_{i}(1)\in(M_{l_{1}l_{2}+r}(\mathbb{C}))\otimes\textbf{1}_{k}\subseteq M_{(l_{1}l_{2}+r)k}(\mathbb{C}).$$
We will repeat part of the proof of \cite[Theorem 2.7]{Li3} and point out how to modify it.

As in the proof of \cite[Theorem 2.7]{Li3}, we can choose an open cover $U_{0},U_{1},\cdots,U_{\bullet}$ of $[0,1]$
with $$U_{0}=[0,b_{0}),U_{1}=(a_{1},b_{1}),U_{2}=(a_{2},b_{2}),\cdots,U_{\bullet-1}=(a_{\bullet-1},b_{\bullet-1}),U_{\bullet}=(a_{\bullet},1],$$
$$0<a_{1}<b_{0}<a_{2}<b_{1}<a_{3}<b_{2}<\cdots<a_{\bullet}<b_{\bullet-1}<1.$$
~~We will define $P_{U}^{i}(i=1,2,\cdots,l_{1})$ as same as in \cite[2.12]{Li3} for $U=U_{i}(0<i<\bullet)$---note that $Sp\phi_{y}$, for $y\in(a_{1},b_{\bullet-1})\subset(0,1)$, are distinct. For $U_{0}$ and $U_{\bullet}$, a special care is needed as follows. We will only do it for $U_{0}$
(it is the same for $U_{\bullet}$).
Write $Sp\phi|_{0}=\{\lambda_{1}^{\sim k},\lambda_{2}^{\sim k},\cdots,\lambda_{q}^{\sim k}\}$ with $q=l_{1}l_{2}+r$. Then $\{\lambda_{1},\lambda_{2},\cdots,\lambda_{q}\}$ can be paired with $\{a_{0}(0)^{\sim l_{2}},a_{2}(0)^{\sim l_{2}},\cdots,a_{l_{1}-1}(0)^{\sim l_{2}},a_{l_{1}}(0)^{\sim (l_{2}+r)}\}$ (note $\sim\!l_{2}k$ is changed to $\sim\!l_{2}$ here) to within $\eta$. We can divide $\{\lambda_{1},\lambda_{2},\cdots,\lambda_{q}\}$ into groups $\{\lambda_{1},\lambda_{2},\cdots,\lambda_{q}\}=\bigcup_{j=1}^{l_{1}}E'^{j}$ (where $|E'^{j}|=l_{2}$
if $1\leq j\leq l_{1}-1$, and $|E'^{j}|=l_{2}+r$ if $j=l_{1}$) such that $dist(\lambda_{i},a_{j}(0))<\eta$, for all $ \lambda_{i}\in E^{j}$.

Let $\sigma'$ satisfy the following conditions: \\
(1) $\sigma'<\min\{dist(\lambda_{i},\lambda_{j}),\ \ i\neq j\}$;\\
(2) $\sigma'<\eta-\max\{dist(\lambda_{i},a_{j}(0)),\ \lambda_{i}\in E^{j}\}.$

We can choose $b_{1}$ $(>b_{0}>a_{1}>0)$ being so small that for any $y\in[0,b_{1}]$, $Sp\phi_{y}$ and $Sp\phi_{0}$ can be paired to within $\frac{\sigma'}{2}$
and $dist(a_{j}(y),a_{j}(0))<\frac{\sigma'}{2}$. Then for each $y\in[0,b_{1}]$, $Sp\phi_{y}$ can be written as a set of
$$\{\lambda_{1}^{1}(y),\lambda_{1}^{2}(y),\cdots,\lambda_{1}^{k}(y),\lambda_{2}^{1}(y),\lambda_{2}^{2}(y),\cdots,\lambda_{2}^{k}(y),\cdots,\lambda_{q}^{1}(y),\cdots,\lambda_{q}^{k}(y)\}$$
with $\lambda_{i}^{j}(0)=\lambda_{i}$. Then let $E^{j}(y)$ be the set $\{\lambda_{i}^{i'}(y)$; $\lambda_{i}\in E'^{j}\}$. In this way we have,  if $\lambda_{i}^{i'}\in E^{j}$, then 
$$dist(\lambda_{i}^{i'}(y),a_{j}(y))<\eta.$$
Let both $P_{U_{0}}^{j}(y)$ and $P_{U_{1}}^{j}(y)$ (defined on $U_{0}=[0,b_{0})$ and $U_{1}=(a_{1},b_{1})$) be the spectral projections corresponding to $E_{j}(y)$. In particular,  $P_{U_{0}}^{j}(0)\in M_{l_{1}l_{2}+r}(\mathbb{C})\otimes\textbf{1}_{k}$. We can define $p_{j}(y)$ as a subprojection of $P_{U}^{j}(y)$
(for $U\ni y$) as in 2.9-2.16 of \cite{Li3} for each $y\in[b_{0},a_{\bullet}]$ but with rank $(p_{j}(y))=(l_{2}-3)k$ (instead of $l_{2}-3$ in \cite{Li3}) for $1\leq j\leq l_{1}-1$ and $rank(p_{l_{1}}(y))=(l_{2}+r-3)k$ (instead of $l_{2}+r-3$ in \cite{Li3}). Also we can choose an arbitrary sub projection $p_j(0)< P_{U_{0}}^{j}(0)\in M_{l_{1}l_{2}+r}(\mathbb{C})\otimes\textbf{1}_{k}$ of form
$p_{j}(0)=p'_{j}(0)\otimes\textbf{1}_{k}\in M_{l_{1}l_{2}+r}(\mathbb{C})\otimes\textbf{1}_{k}$
  with $rank(p_{j}'(0))=l_{2}-3$ for $1\leq j\leq l_{1}-1$, and
$rank(p'_{l_{1}}(0))=l_{2}+r-3.$ Consequently, $$rank(p_{j}(0))=(l_{2}-3)k~~~~~\mbox{ and}~~~~rank(p_{l_{1}}(0))=(l_{2}+r-3)k.$$
~~~~~~Finally, connect $p_{j}(0)$ and $p_{j}(b_{0})$ by $p_{j}(y)$ for $y\in[0,b_{0}]$ inside $P_{U_{0}}^{j}(y)$. As one can see from 2.16 of \cite{Li3}, if the projections $p_{j}(y)$ are subprojections of $P_{U}^{j}(y)$, then all the estimations in that proof hold. After we do similar modifications for $P_{U_{\bullet}}^{j}(y)$ and $p_{j}(y)$ near point 1, we will get $p_{j}(y)\in M_{l_{1}l_{2}+r}(I_{k})$
instead of $M_{(l_{1}l_{2}+r)k}(C[0,1])$. (This method was also used in the proof of \cite[Theorem 3.10]{EGS}.)\\
\end{proof}

The following result is a generalization  of \cite[Proposition 4.42]{G5}.

\begin{theorem}\label{5.31} Let $X$ be a connected finite simplicial complex of dimension at most 2, $\varepsilon>0$ and $F\subset C(X)$, a finite set of generators. Suppose that $\eta\in(0,\varepsilon)$ satisfies that if $dist(x,x')\leq2\eta$, then $\|f(x)-f(x')\|<\frac{\varepsilon}{4}$ for all $f\in F$.

For any $\delta>0$ and  positive integer $J>0$, there exist an integer $L>0$ and a finite set $H\subseteq AffTC(X)(=C_{\mathbb{R}}(X))$ such that the following holds.

If $\phi,\psi:C(X)\rightarrow B=M_{K}(I_{k})$ (or $B=PM_{\bullet}(C(Y))P$) are unital homomorphisms with the properties: \\
(a) $\phi$ has $sdp(\eta/32,\delta)$;\\
(b) $K\geq L$ (or $rank(P)\geq L$);\\
(c) $\|AffT\phi(h)-AffT\psi(h)\|<\frac{\delta}{4}$, for all $h\in H$,\\
then there are three orthogonal projections $Q_{0},Q_{1}, Q_2 \in B$, 
 two homomorphisms $\phi_{1}\in Hom(C(X),Q_{1}BQ_{1})_{1}$ and $\phi_{2}\in Hom(C(X),Q_{2}BQ_{2})_{1}$, and a unitary $u\in B$ such that\\
(1) $\textbf{1}_B=Q_{0}+Q_{1}+Q_2$;\\
(2) $\|\phi(f)-\big(Q_0\phi(f)Q_0+\phi_{1}(f)+\phi_{2}(f)\big)\|<\varepsilon$ ~~and \\ $\|(Adu\circ\psi)(f)-\big(Q_0(Adu\circ\psi)(f)Q_0+\phi_{1}(f)+\phi_{2}(f)\big)\|<\varepsilon$, for all $f\in F$;\\
(3) $\phi_{2}$ factors through $C[0,1]$;\\
(4) $Q_{1}=p_{1}+\cdots+p_{n}$ with $(rank(Q_{0})+2)J<rank(p_{i})$ $(i=1,2,\cdots,n)$, where rank: $K_{0}(B)\rightarrow \mathbb{Z}$ is  the map induced on $K_{0}$ by the evaluation map at 0 or 1.  (which is $rank~ p_{i}(\underline{0})$ for $B=M_K(I_k)$, where $rank~p_{i}(\underline{0})$ is regarded as projections in $M_{K}(\mathbb{C})$ not $M_{K}(M_{k}(\mathbb{C}))$), and $\phi_1$ is defined by
 $$\phi_{1}(f)=\sum\limits_{i=1}\limits^{n}f(x_{i})p_{i},\ \forall f\in C(X),$$ where $p_{1},p_2, \cdots,p_{n}$ are mutually orthogonal projections and
$\{x_{1},x_{2},\cdots,x_{n}\}\subset X$ is an $\varepsilon$-dense subset of $X$.
\end{theorem}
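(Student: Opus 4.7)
The approach adapts the method of \cite[Proposition 4.42]{G5} to the target algebras $M_K(I_k)$ and $PM_\bullet(C(Y))P$, by combining the decomposition of Theorem \ref{5.30} with a pairing argument driven by the $AffT$ hypothesis. I would decompose $\phi$ using Theorem \ref{5.30} with the first $n$ maps chosen to be constants at the points of an $\varepsilon$-dense set $\{x_1,\ldots,x_n\}\subset X$ and the last map a continuous ``filler'', and then transport the decomposition to $\psi$ up to a unitary.

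For the setup, fix $\{x_1,\ldots,x_n\}\subset X$ an $\varepsilon/4$-dense set, and choose $H\subset AffTC(X)$ to contain mollifications of characteristic functions of $B_{\eta/64}(x_i)$ together with enough further test functions so that closeness on $H$ forces the spectral counts of $\phi$ and $\psi$ in each $B_{\eta/32}(x_i)$ (and along the image of the filler) to nearly agree up to $O(\delta)$. The threshold $L$ will be chosen so that, writing $K=(n+1)l_2+r$ in the notation of Theorem \ref{5.30}, one has $l_2$ large enough that $(l_2-3)k>(3(n+1)k+2)J$, which is exactly what is needed for conclusion (4).

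Next, apply Theorem \ref{5.30} to $\phi$ with $a_i\equiv x_i$ for $i=1,\ldots,n$ and one continuous filler map $a_{n+1}\colon Sp(B)\to X$ absorbing the remainder of $Sp\phi_y$. The $sdp(\eta/32,\delta)$ property provides at least $l_2k$ spectral points in each $B_{\eta/32}(x_i)$ at every $y$, while the remainder (bounded in number and continuous by Lemma \ref{5.24}) can be tracked by a single $a_{n+1}$ after first pre-processing via Corollary \ref{5.27} and Remark \ref{5.28} to arrange distinct interior spectra and endpoint multiplicity exactly $k$. Theorem \ref{5.30} then yields orthogonal projections $p_0,p_1,\ldots,p_{n+1}\in B$ with $\phi(f)\approx p_0\phi(f)p_0+\sum_{i=1}^n f(x_i)p_i+(f\circ a_{n+1})\cdot p_{n+1}$ on $F$. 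Setting $Q_0=p_0$, $Q_1=\sum_{i=1}^n p_i$, $Q_2=p_{n+1}$, $\phi_1(f)=\sum f(x_i)p_i$, and $\phi_2(f)=(f\circ a_{n+1})\cdot p_{n+1}$ gives (1), (3), (4) and the first half of (2); note $\phi_2$ factors through $C[0,1]$ via $f\mapsto f\circ a_{n+1}$.

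The hypothesis $\|AffT\phi(h)-AffT\psi(h)\|<\delta/4$ on $H$ forces $Sp\psi_y$ to have essentially the same cluster counts near each $x_i$ and the same ``filler profile'' as $Sp\phi_y$. Repeating the construction for $\psi$ gives projections $p_0',\ldots,p_{n+1}'$ of the same $K_0$-ranks and a filler $a_{n+1}'$ that is $\eta$-close to $a_{n+1}$; a standard $K_0$-matching argument then produces a unitary $u\in B$ with $up_i'u^*=p_i$ for all $i$. The second half of (2) then follows by a Su-style uniqueness argument (Lemmas \ref{5.24} and \ref{su}) applied locally on the corner $Q_1+Q_2$, with residual errors absorbed into the $Q_0$-corner. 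The main obstacle is this final alignment step in the $M_K(I_k)$ case: the $I_k$-endpoint constraint forces $u(0), u(1)\in M_K(\mathbb{C})\otimes\mathbf{1}_k$, and the filler map must have endpoint multiplicities that are multiples of $k$; I would handle this by an endpoint-preserving construction in the spirit of Lemma \ref{Ext two}, combined with careful bookkeeping of endpoint spectra via Remark \ref{5.28}, so that the unitary path witnessing $up_i'u^*=p_i$ respects the $I_k$-structure throughout $[0,1]$.
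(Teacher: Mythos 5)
Your core decomposition step does not work as stated. Theorem~\ref{5.30} requires that for \emph{every} $y$, the multiset $Sp\phi_y$ can be \emph{bijectively} paired within $\eta$ with $\Theta(y)=\{a_1(y)^{\sim l_2k},\dots,a_{l_1}(y)^{\sim(l_2+r)k}\}$. If you set $a_i\equiv x_i$ constant for $i\le n$, you are demanding that $Sp\phi_y$ contain exactly $l_2k$ points within $\eta$ of each $x_i$ at every $y$, with the remaining $(l_2+r)k$ points all clustered around the single moving point $a_{n+1}(y)$. Neither assertion follows from $sdp(\eta/32,\delta)$: the $sdp$ property only gives a \emph{lower bound} $\ge\delta\cdot Kk$ in each ball, the counts near each $x_i$ vary with $y$ (and the balls overlap, so the total assignment need not be consistent), and the ``remainder'' of $Sp\phi_y$ is in general spread across $X$ and cannot be within $\eta$ of a single point $a_{n+1}(y)$. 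So the pairing hypothesis fails and Theorem~\ref{5.30} cannot be applied with constant $a_i$.

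The actual route (following \cite[Prop.~4.42]{G5}) is different in exactly this place: one invokes a version of \cite[Lemma~4.33]{G5}, which uses the $AffT$-closeness of $\phi$ and $\psi$ together with $sdp$ to manufacture a \emph{common} comparison profile $\Theta(y)$ built from non-constant maps $a_i=\alpha\circ\beta_i$ (with $\alpha:[0,1]\to X$ a fixed curve through the $\varepsilon$-dense set and $\beta_i:[0,1]\to[0,1]$ continuous), tracking both $Sp\phi_y$ and $Sp\psi_y$ simultaneously. Theorem~\ref{5.30} is then applied to this $\Theta$, and the constant point-evaluation part $\phi_1$ is subsequently extracted from the $\alpha\circ\beta_i$-decomposition because the $\beta_i$ are designed to dwell at the pre-images of the $x_j$'s; the remainder factors through $C[0,1]$ via $\alpha$, giving $\phi_2$, and the unitary $u$ comes for free from decomposing $\phi$ and $\psi$ against the \emph{same} $\Theta$. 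The only $I_k$-specific modifications the paper actually needs are the rank bookkeeping in Lemma~4.33 ($K$ versus $Kk$), inserting the extra factor $k$ in the multiplicities of $\Theta(y)$, and replacing the auxiliary homomorphism $\psi$ of \cite[Lemma~4.33]{G5} by $\psi'\otimes\imath_k$ with $\psi':C(X)\to M_K(C[0,1])$. Your final ``endpoint alignment'' concern is therefore handled at the level of $\Theta$ and $\psi'$, not by an ad hoc Lemma~\ref{Ext two}-type patch, but the more serious problem is the one above: you need the Lemma~4.33 mechanism to make the pairing hypothesis of Theorem~\ref{5.30} hold at all.
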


\begin{proof} For the case $B=PM_{\bullet}(C(Y))P$, this is \cite[Proposition 4.42]{G5}. 
The proof for the case $B=M_K(I_k)$ is almost  the same as the proof of  \cite[Proposition 4.42]{G5}, replacing   \cite[Theorem 4.1]{G5} by
Theorem \ref{5.30} above. The only thing one should notice is that, in \cite[Lemma 4.33]{G5}, $rank\phi(1)=K$, the $K$ should be corresponding to $K$ in our theorem (not $Kk$) and $\Theta(y)$ should be defined as $$\Theta(y)=\left \{\alpha\circ\beta_{1}(y)^{\sim L_{2}k},\alpha\circ\beta_{2}(y)^{\sim L_{2}k},\cdots,\alpha\circ\beta_{L-1}(y)^{\sim L_{2}k},\alpha\circ\beta_{L}(y)^{\sim (L_{2}+L_{1})k}\right \}.$$
(Note in the above, we use $\sim\!\!L_{2}k$ and $\sim\!\!(L_{2}+L_{1})k$ to replace $\sim\!\!L_{2}$ and $\sim\!\!(L_{2}+L_{1})$ in \cite{G5}.) In the proof of this version  of  \cite[Lemma 4.33]{G5}, one can choose the homomorphism $\psi':C(X)\rightarrow M_{k}(C[0,1])$ (not to  $M_{Kk}(C[0,1])$) as the map $\psi$ there, with
$$\|AffT\phi(f)-AffT\psi'(f)\|<\frac{\delta}{4}~~~~~~ \forall f\in H(\eta,\delta,x)$$
as in \cite[Lemma 4.33]{G5}. Then let $\psi=\psi'\otimes\imath_{k}$, where $\imath_{k}:\mathbb{C}\rightarrow M_{k}(\mathbb{C})$ is defined by $\imath_{k}(\lambda)=\lambda\cdot\textbf{1}_{k}$. With this  modification, we have $Sp\psi_{y}'$ being
$$\Theta'(y)=\{\alpha\circ\beta_{1}(y)^{\sim L_{2}},\alpha\circ\beta_{2}(y)^{\sim L_{2}},\cdots,\alpha\circ\beta_{L-1}(y)^{\sim L_{2}},\alpha\circ\beta_{L}(y)^{\sim (L_{2}+L_{1}})\}$$
and $Sp\psi_{y}$ being
$$\Theta(y)=\{\alpha\circ\beta_{1}(y)^{\sim L_{2}k},\alpha\circ\beta_{2}(y)^{\sim L_{2}k},\cdots,\alpha\circ\beta_{L-1}(y)^{\sim L_{2}k},\alpha\circ\beta_{L}(y)^{\sim (L_{2}+L_{1})k}\}$$
as desired. All other parts of the proof are exactly the same.\\
\end{proof}

For the proof of uinqueness theorem in \cite{GJL}, it is important to have a  simultaneous decomposition for two homomorphisms as below.

\begin{theorem}\label{5.32} Let $X$ be a connected finite simplicial complex of dimension at most 2, $\varepsilon>0$ and $F\subset C(X)$, a finite set of generators. Suppose that $\eta\in(0,\varepsilon)$ satisfies that if $dist(x,x')\leq2\eta$, then $\|f(x)-f(x')\|<\frac{\varepsilon}{4}$ for all $f\in F$. Let $\kappa$ be a fixed simplicial structure of $X$.

For any $\delta>0$ and  positive integer $J>0$, there exist an integer $L>0$ and a finite set $H\subseteq AffTC(X)(=C_{\mathbb{R}}(X))$ such that the following holds.

If $X_1$ is a connected sub-complex of $(X, \kappa)$, and  if $\phi,\psi:C(X_1)\rightarrow B=M_{K}(I_{k})$ (or $B=PM_{\bullet}(C(Y))P$) are unital homomorphisms with the following properties: \\
(a) $\phi$ has $sdp(\eta/32,\delta)$;\\
(b) $K\geq L$ (or $rank(P)\geq L$);\\
(c) $\|AffT\phi(h|_{X_1})-AffT\psi(h|_{X_1})\|<\frac{\delta}{4}$, for all $h\in H$,\\
then there are three orthogonal projections $Q_{0},Q_{1}, Q_2 \in B$, two homomorphisms $\phi_{1}\in Hom(C(X_1),Q_{1}BQ_{1})_{1}$ and $\phi_{2}\in Hom(C(X_1),Q_{2}BQ_{2})_{1}$, and a unitary $u\in B$ such that\\
(1) $\textbf{1}_B=Q_{0}+Q_{1}+Q_2$;\\
(2) $\|\phi(f|_{X_1})-\big(Q_0\phi(f|_{X_1})Q_0+\phi_{1}(f|_{X_1})+\phi_{2}(f|_{X_1})\big)\|<\varepsilon$ ~~and \\ $\|(Adu\circ\psi)(f|_{X_1})-\big(Q_0(Adu\circ\psi)(f|_{X_1})Q_0+\phi_{1}(f|_{X_1})+\phi_{2}(f|_{X_1})\big)\|<\varepsilon$ for all $f\in F$;\\
(3) $\phi_{2}$ factors through $C[0,1]$;\\
(4) $Q_{1}=p_{1}+\cdots+p_{n}$ with $(rank(Q_{0})+2)J<rank(p_{i})$ $(i=1,2,\cdots,n)$,
and $\phi_1$ is defined by
$$\phi_{1}(f)=\sum\limits_{i=1}\limits^{n}f(x_{i})p_{i}~~~ \forall f\in C(X),$$ 
where $p_{1},p_2, \cdots,p_{n}$ are mutually orthogonal projections and
$\{x_{1},x_{2},\cdots,x_{n}\}\subset X_1$ is an $\varepsilon$-dense subset of $X_1$.
\end{theorem}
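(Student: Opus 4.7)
The plan is to deduce Theorem \ref{5.32} from Theorem \ref{5.31} by taking a uniform version over the finite lattice of connected sub-complexes of the fixed $(X,\kappa)$. The key observation is that once the simplicial structure $\kappa$ is fixed, there are only finitely many connected sub-complexes, say $X_1^{(1)},X_1^{(2)},\cdots,X_1^{(m)}$, because each is determined by a subset of the finite set of simplices of $\kappa$.

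First I would apply Theorem \ref{5.31} to each $X_1^{(j)}$ separately: the restricted set $F_j:=\{f|_{X_1^{(j)}}:f\in F\}$ is a finite generating set of $C(X_1^{(j)})$, because the restriction $C(X)\twoheadrightarrow C(X_1^{(j)})$ is a surjective $\ast$-homomorphism, and the same $\eta$ works for $F_j$ since $X_1^{(j)}$ is a metric sub-space of $X$. This yields integers $L_j>0$ and finite sets $H_j\subset AffTC(X_1^{(j)})$. I set $L:=\max_{1\leq j\leq m} L_j$. For each $h\in H_j$, I would invoke Tietze's extension theorem to choose some $\tilde h\in C_{\R}(X)=AffTC(X)$ with $\tilde h|_{X_1^{(j)}}=h$, and define $H:=\bigcup_{j=1}^{m}\{\tilde h:h\in H_j\}$, a finite subset of $AffTC(X)$.

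Given $\phi,\psi:C(X_1)\to B$ satisfying (a)--(c), I identify $X_1=X_1^{(j_0)}$ for some $j_0$. Then $\phi$ has $sdp(\eta/32,\delta)$ on $Sp(C(X_1^{(j_0)}))=X_1^{(j_0)}$ (matching exactly the hypothesis of Theorem \ref{5.31} applied to $X_1^{(j_0)}$), the size condition gives $K\geq L\geq L_{j_0}$, and for each $h'\in H_{j_0}$ the extension $\tilde h\in H$ with $\tilde h|_{X_1^{(j_0)}}=h'$ turns (c) into $\|AffT\phi(h')-AffT\psi(h')\|<\delta/4$. Thus the hypotheses of Theorem \ref{5.31} hold for $\phi,\psi:C(X_1^{(j_0)})\to B$ with finite generating set $F_{j_0}$, and the conclusions---the three orthogonal projections, the two auxiliary homomorphisms $\phi_1,\phi_2$, the unitary $u$, the $\varepsilon$-dense subset $\{x_i\}\subset X_1^{(j_0)}$, and the rank conditions---are read off verbatim. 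The only obstacle worth noting is the bookkeeping required to make a single pair $(L,H)$ work for every connected sub-complex simultaneously; this is resolved by the finiteness of $\{X_1^{(j)}\}$ together with Tietze extension, which is harmless because (c) only tests the values of the extensions on $X_1$.
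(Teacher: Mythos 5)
Your proof is correct and follows essentially the same route as the paper: enumerate the finitely many connected sub-complexes of $(X,\kappa)$, apply Theorem~\ref{5.31} to each to get $L_i$ and $H_i$, pull each $H_i$ back to $C(X)$ via Tietze extension, and take $L=\max_i L_i$ and $H=\bigcup_i\tilde H_i$. (The paper cites Theorem~\ref{5.30} here, which appears to be a typo for Theorem~\ref{5.31}; you cited the correct one, and your extra remarks checking that the restricted generating sets and the modulus $\eta$ still work are harmless additions the paper leaves implicit.)
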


\begin{proof} Suppose that $\{X_i\}_i$ are all connected sub-complexes of $(X,\kappa)$ (there are finitely many of them for a fixed simplicial structure of a finite complex). Apply Theorem \ref{5.30} to each $X_i$ to obtain $L_i$ and $H_i\subseteq AffT(C(X_i))$ as in the theorem. By Tietze  Extension Theorem, there are finite sets ${\tilde H}_i \subseteq AffT(C(X))$ such that $H_i\subseteq \{h|_{X_i}~|~~h\in {\tilde H}_i\} $. Evidently $L=\max_i\{L_i\}$ and $H=\cup_i {\tilde H}_i $ are as desired.\\
\end{proof}

\vspace{3mm}

{\bf Acknowledgement}\ \ The authors would like to express our special thanks of gratitude to Professor Guihua Gong who suggested us to do this interesting problem. 
We also benefit a lot from  discussions with him.


\clearpage

\begin{tiny}

\begin{small}

\end{small}

\end{tiny}


\begin{thebibliography}{80}
 \vspace{-1.6mm}



 %
\bibitem{D}
 M. Dadarlat, Reduction to dimension three of local spectra of Real rank zero $C^{*}$-algebras, \emph{J. Reine Angew. Math.} 460(1995) 189-212.
%
\bibitem{DG}
 M. Dadarlat and G. Gong, A classification result for approximately homogeneous $C^{*}$-algebras of real rank zero, \emph{Geometric and Functional Analysis,} 7(1997) 646-711.

 

 %
\bibitem{Ei}
S. Eilers, A complete invariant for AD algebras with bounded torsion in $K_{1}$, \emph{J. Funct. Anal.} 139(1996), 325-348. 

%
\bibitem{Ell1}
 G. A. Elliott, On the classification of $C^{*}$-algebras of real rank zero, \emph{J. Reine Angew. Math.} 443(1993) 263-290.
%
\bibitem{Ell2}
 G. A. Elliott, A classification of certain simple $C^{\ast}$-algebras, \emph{Quantum and Non-Commutative Analysis, Kluwer, Dordrecht,} (1993),  373-388.
%
\bibitem{Ell3}
 G. A. Elliott, A classification of certain simple $C^{\ast}$-algebras, II, \emph{J. Ramaunjan Math. Soc.}, 12 (1997), 97-134.


\bibitem{EG1}
G. A. Elliott and G. Gong, On the inductive limits of matrix algebras over two-tori, \emph{American. J. Math} 118(1996) 263-290.

%
\bibitem{EG2}
G. A. Elliott and G. Gong, On the classification of $C^{*}$-algebras of real rank zero, \uppercase\expandafter{\romannumeral2}, \emph{Ann. of Math}, 144(1996) 497-610.

%
\bibitem{EGL1}
 G. A. Elliott, G. Gong and L. Li, On the classification of simple inductive limit $C^{*}$-algebras, \uppercase\expandafter{\romannumeral2}: The isomorphism Theorem, \emph{Invent. Math.} 168(2)(2007) 249-320.
%
\bibitem{EGL2}
 G. A. Elliott, G. Gong and L. Li, Injectivity of the connecting maps in AH inductive limit systems, \emph{Canand. Math. Bull.}, 26(2004) 4-10.
%
\bibitem{EGJS}
G. A. Elliott, G. Gong, X. Jiang, H. Su: Aclassification of simple limits of dimension drop $C^{*}-$algebras, \emph{Fields Inst. Commun.}, 13, 125-143 (1997).

%
\bibitem{EGLP}
 G. A. Elliott, G. Gong, H. Lin, C.Pasnicu: Abelian $C^{*}-$subslgebras of $C^{*}-$algebras of real rank zero and inductive limit $C^{*}-$algebras, \emph{Duke Math. J.}, 83, 511-554 (1996).


%
\bibitem{EGS}
G. A. Elliott, G. Gong, H. Su: On the classification of $C^{*}-$algebras of real rank zero, IV: Reduction to local spectrum
of dimension two, \emph{Fields Inst. Commun.}, 20, 73-95 (1998).



%
\bibitem{G1}
 G. Gong, Approximation by dimension drop $C^{*}$-algebras and classification, \emph{C. R. Math. Rep. Acad. Sci Can.}, 16(1994) 40-44.
%
\bibitem{G2}
 G. Gong, Classification of $C^{*}$-algebras of real rank zero and unsuspended E-equivalent types, J. Funct. Anal. 152(1998) 281-329.

%
\bibitem{G3}
 G. Gong, On inductive limit of matrix algebras over higher dimension spaces, Part \uppercase\expandafter{\romannumeral1},  \emph{Math Scand.}, 80(1997) 45-60.
 
 %
 \bibitem{G4}
 G. Gong, On inductive limit of matrix algebras over higher dimension spaces, Part \uppercase\expandafter{\romannumeral2}, \emph{Math Scand.}, 80(1997) 61-100.
 

%
\bibitem{G5}
 G. Gong, On the classification of simple inductive limit $C^{*}$-algebras, I: Reduction Theorems, \emph{Doc. Math.}, 7(2002) 255-461.

%
\bibitem{GJL}
G. Gong, C. Jiang, L. Li, A classification of inductive limit $C^*$-algebras with ideal property,
\emph{preprint.}

 
%
\bibitem{GJLP1}
 G. Gong, C. Jiang, L. Li, C. Pasnicu, AT structure of $AH$ algebras with ideal property and torsion free $K$-theory, \emph{J. Func. Anal.} 58(2010) 2119-2143.
%
\bibitem{GJLP2}
 G. Gong, C. Jiang, L. Li, C. Pasnicu, A Reduction theorem for $AH$ algebras with the ideal property, \emph{Int. Math. Res. Not. IMRN}, 2018, no. 24, 7606-7641.

 
 
 


 %
\bibitem{Ji-Jiang}
 K. Ji and C. Jiang, A complete classification of AI algebra with ideal property, \emph{Canadian. J. Math,} 63(2), (2011), 381-412.
  %
\bibitem{Jiang1}
 C. Jiang, A classification of non simple $C^{*}$-algebras of tracial rank one: Inductive limit of finite direct sums of simple TAI $C^{*}$-algebras, \emph{J. Topol. Anal.} 3 No.3(2011), 385-404.

  %
\bibitem{Jiang2}
C. Jiang, Reduction to dimension two of local spectrum for $AH$ algebras with ideal property, \emph{Canad. Math. Bull.}, 60 (2017), no. 4, 791-806.


\bibitem{JW}
C. Jiang, K. Wang, A complete classification of limits of splitting
interval algebras with the ideal property,
\emph{J. Ramanujan Math. Soc.}, 27, No. 3 (2012) 305-354.
  %
\bibitem{Li1}
 L. Li, On the classification of simple $C^{*}$-algebras: Inductive limit of matrix algebras over trees, \emph{Mem Amer. Math, Soc.}, 127(605) 1997.


  %
\bibitem{Li2}
 L. Li, Simple inductive limit $C^{*}$-algebras: Spectra and approximation by interval algebras, \emph{J. Reine Angew Math },507(1999) 57-79.


  %
\bibitem{Li3}
 L. Li, Classification of simple $C^{*}$-algebras: Inductive limit of matrix algebras over 1-dimensional spaces, \emph{J. Func. Anal.}, 192(2002) 1-51.

%
 \bibitem{Li4}
 L. Li, Reduction to dimension two of local spectrum for simple $AH$ algebras, \emph{J. of Ramanujian  Math. Soc.}, 21 No.4(2006) 365-390.

  %
\bibitem{Lin1}
H. Lin, Tracially $AF$ $C^{*}-$algebras, \emph{Trans. Amer. Math. Soc.}, 353 (2001) No. 2, 693-722.

  %
\bibitem{Lin2}
H. Lin, Simple nuclear $C^{*}-$algrbras of tracial topological rank one, \emph{J. Funct. Anal.}, 251(2007), No. 2, 601-679.

\bibitem{Lin3}
H. Lin, Crossed products and minimal dynamical systems, \emph{J. Topol. Anal.}, 10 (2018), no. 2, 447-469.
    
  %
\bibitem{NT}
K. E. Nielsen and K. Thomsen,  Limits of circle algebras. \emph{Expo. Math.}, 14 (1996), 17-56.
 

%
\bibitem{Pa1}
 C. Pasnicn, Shape equiralence, nonstable K-theory and AH algebras, \emph{Pacific J. Math}, 192(2000) 159-182.

  %

\bibitem{Pa2}
 C. Pasnicu, The ideal property in crossed products, \emph{Proc. Amer. Math. Soc.}, 131 (7)(2003) 2103-2108.

 %
 \bibitem{Pa3}
 C. Pasnicu, Extension of $AH$ algebras with the ideal property, \emph{Proc. Edinb. Math. Soc.}, (2) 42 (1)(1999) 65-76.
 
 %
 \bibitem{Pa4}
 C. Pasnicu, On the $AH$ algebras with the ideal property, \emph{J. Operator Theory}, 43 (2)(2000) 389-407.
 %
 \bibitem{Pa5}
 C. Pasnicu, Ideals generated by projections and inductive limit $C^{*}-$algebras, \emph{Rocky Mountain J. Math.}, 31 (3)(2001) 1083-1095.


%
\bibitem{R}
M. R{\o}rdam, Classification of certain infinite simple $C^{*}-$algebras, \emph{J. Funct. Anal.}, 131(1995), 415-458.

%
\bibitem{Su} 
H. Su, {On the classification of $C^*$-algebras of real rank zero: inductive limits of matrix algebras over non-Hausdorff graphs}, \emph{Memoirs of the American Mathematical Society,} Vol 114, No. 547, 1995. 

 %
\bibitem{Thm1}
K. Thomsen, Insuctive limit of interval algebras, \emph{American J. of Math}, 116, 605--620
(1994).

  %
\bibitem{Thm2}
K. Thomsen, Limits of certain subhomogeneous $C^{*}$-algebras, \emph{Mem. Soc. Math. Fr.} (N.S.), 71(1999).


  %
\bibitem{Thm3}
 K. Thomsen, Inductive limit of interval algebras: the simple case. In: Arak; H etal. (eds), \emph{Quantum and non-commutative analysis},
Kluwer Dordrecht (1993) 399-404.
 
 


 
  %
  \bibitem{Vi}
J.  Villadsen,: The range of the Elliott invariant, \emph{J. Reine Angew. Math.}, 462,  (1995), 31-35.



%
  \bibitem{W}
K.  Wang, On invariants of $C^*$-algebras with the ideal property, Journal of Noncommutative Geometry, Volume 12, Issue 3, 2018, 1199-1225.
  
  
\end{thebibliography}
\end{document}